\definecolor{citation}{rgb}{0.2,0.58,0.2} 
\definecolor{formula}{rgb}{0.1,0.2,0.6}
\definecolor{url}{rgb}{0.3,0,0.5}
\newcommand{\reqnomode}{\tagsleft@false}
\def\dx{\,{\rm d}x}
\def \d{\,{\rm d}}
\def \diver{\,{\rm div}}
\def\dist{\,{\rm dist}}
\def\eup{\,{\rm ess\, sup}}
\def\eif{\,{\rm ess\, inf}}
\def\supp{\,{\rm supp}}
\def\diam{\,{\rm diam}}
\DeclareRobustCommand*{\bfseries}{%
  \not@math@alphabet\bfseries\mathbf
  \fontseries\bfdefault\selectfont
  \boldmath
}
\DeclareMathOperator*{\osc}{osc}
\newlength{\defbaselineskip}
\newcommand{\setlinespacing}[1]
           {\setlength{\baselineskip}{#1 \defbaselineskip}}
\newcommand{\mint}{\mathop{\int\hskip -1,05em -\, \!\!\!}\nolimits}
\newtheorem{theorem}{Theorem}
\newtheorem{corollary}{Corollary}[section]
\newtheorem{definition}{Definition}
\newtheorem{remark}{Remark}[section]
\newtheorem{lemma}{Lemma}[section]
\newtheorem{proposition}{Proposition}[section]
\newcommand{\N}{\mathbb{N}}
\numberwithin{equation}{section}
\newcommand{\ic}[1]{\textcolor{teal}{#1}}
\newcommand{\rr}{\varrho}
\newcommand{\snr}[1]{\lvert #1\rvert}
\newcommand{\nr}[1]{\lVert #1 \rVert}
\newcommand{\data}{\textit{\texttt{data}}}
\title[
Removable sets for solutions of non-uniformly elliptic equations]{Removable sets in non-uniformly elliptic problems
}
\author{Iwona Chlebicka} \address{Iwona Chlebicka\\Faculty of Mathematics, Informatics and Mechanics, University of Warsaw\\ul. Banacha 2, 02-097 Warsaw, Poland} \email{\texttt{iskrzypczak@mimuw.edu.pl}}
\author{Cristiana De Filippis}  \address{Cristiana De Filippis\\Mathematical Institute, University of Oxford\\ Andrew Wiles Building, Radcliffe Observatory Quarter, Woodstock Road, Oxford, OX26GG, Oxford, United Kingdom} \email{\texttt{Cristiana.DeFilippis@maths.ox.ac.uk}}
\begin{document}

\subjclass[2010]{35J60, 35J70\vspace{1mm}} 

\keywords{Measure data problems, Obstacle problem, Potential estimates, Removable sets\vspace{1mm}}

\thanks{{\it Acknowledgements.}\ I. Chlebicka is supported by NCN grant no. 2016/23/D/ST1/01072. C. De Filippis is supported by the Engineering and Physical Sciences Research Council (EPSRC): CDT Grant Ref. EP/L015811/1. 
\vspace{1mm}}

\maketitle

\begin{abstract}
 We analyze fine properties of solutions to quasilinear elliptic equations with double phase structure and characterize, in the terms of intrinsic Hausdorff measures, the removable sets for H\"older continuous solutions. 
\end{abstract}
\vspace{3mm}

\setlinespacing{1.08}

\newcommand{\R}{\mathds{R}^n}
\section{Introduction}
Nonlinear version of the classical potential theory has been developed by several authors \cite{hekima,kima,kumi,kumi1,lima,ma,mik,tu}. Classical investigations on harmonic mappings has become naturally extended to the case of so-called $\mathcal{A}$-harmonic maps, i.e., continuous weak solutions to the equation
\begin{flalign}\label{i0}
-\diver A(x,Du)=0 \ \ \mbox{in} \ \ \Omega,
\end{flalign}
where $A(x,z)\cdot z\approx \snr{z}^{p}$ and $p\in (1,\infty)$. The prototypical example is given by the classical $p$-Laplacean equation
\begin{flalign*}
-\diver(\snr{Du}^{p-2}Du)=0 \ \ \mbox{in} \ \ \Omega.
\end{flalign*}
An interesting and delicate issue in this framework is quantifying the size of removable sets for solutions to \eqref{i0}, in terms of the $s$-dimensional Hausdorff measure for a suitable $s>n-p$. Precisely, when we consider a relatively closed subset $E\subset \Omega$ with $\mathcal{H}^{s}(E)=0$ and a map $u$, being a~solution to \eqref{i0} in $\Omega\setminus E$, the problem is to find a continuous extension $\tilde{u}$ which is $\mathcal{A}$-harmonic in the whole $\Omega$.  This issue has been studied in various types of nonstandard growth settings including the variable exponent and Orlicz spaces, which we summarize in the further parts of the introduction. We shall investigate this phenomenon for problems posed in the so-called double phase spaces. Throughout the paper, $\Omega$ is an open bounded subset of $\mathbb{R}^n$, $n\ge 2$.  Given the double phase energy density \begin{flalign}\label{H}
H(x,z):=\snr{z}^{p}+a(x)\snr{z}^{q} 
\end{flalign}
and the operator $\mathcal{A}_{H(\cdot)}$ defined on $W^{1,H(\cdot)}(\Omega)$ acting as
\begin{flalign}\label{calAH}
\langle\mathcal{A}_{H(\cdot)}v,w\rangle:=\int_{\Omega}A(x,Dv)\cdot Dw \ \dx\quad \text{for }\ w\in C^{\infty}_{c}(\Omega)
\end{flalign}
we investigate $\mathcal{A}_{H(\cdot)}$-harmonic maps, by which we mean energy solutions to the equation
\begin{flalign}\label{A0}
-\diver A(x,Du)=0 \ \ \mbox{in} \ \ \Omega.
\end{flalign}
The growth and coercivity conditions of the involved vector field $A$ are expressed in~\eqref{A} in terms of $H(\cdot)$ with non-negative, $\alpha$-H\"older continuous weight $a$ and $p,q$ satisfying the balance condition~\eqref{pq}. Note that the operator is non-uniformly elliptic since the weight $a$ is allowed to vanish. The space where the solutions are considered is the Musielak-Orlicz space $W^{1,H(\cdot)}(\Omega)$ defined in Section~\ref{ssec:setting}. Our main result describes the volume of removable sets for H\"older continuous $\mathcal{A}_{H(\cdot)}$-harmonic map in the terms of the intrinsic Hausdorff measures defined in Section~\ref{haus} with the use of $H_{\sigma}(\cdot)$ given by~\eqref{Hgamma}.
\begin{theorem}\label{T6}
Let assumptions \eqref{pq} and \eqref{A} be in force, $E\subset \Omega$ be a closed subset and $u\in C(\Omega)$ be a continuous solution to \eqref{A0} in $\Omega\setminus E$ such that, for all $x_{1}\in E$, $x_{2}\in \Omega$,
\begin{flalign*}
\snr{u(x_{1})-u(x_{2})}\le C_u\snr{x_{1}-x_{2}}^{\beta_{0}},
\end{flalign*}
for a positive, absolute constant $C_u$ and some $\beta_{0}\in (0,1]$.\\ If $\mathcal{H}_{H_{\sigma}(\cdot)}(E)=0$, for $\sigma:=1-\tfrac{\beta_{0}}{q}(p-1)$, then $u$ is a solution to \eqref{A0} in $\Omega$.
\end{theorem}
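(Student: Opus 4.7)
The plan is to show that $u$ is a weak solution on all of $\Omega$, that is, for every $\varphi\in C^{\infty}_c(\Omega)$
\[
\int_\Omega A(x,Du)\cdot D\varphi\,\d x=0.
\]
A preliminary point is that $u\in W^{1,H(\cdot)}_{\mathrm{loc}}(\Omega)$, so that the integral above is well-defined; this follows from a Caccioppoli-type estimate on balls surrounding points of $E$, testing with $u-u(x_0)$ and exploiting the H\"older hypothesis to bound the right-hand side by $C_u r^{\beta_{0}}$. The main argument is the classical truncation device: test the equation \eqref{A0}, which is known to hold on $\Omega\setminus E$, against $(1-\eta_\varepsilon)\varphi$, where $\eta_\varepsilon$ is a cutoff supported in a small neighbourhood of $E$ built from a covering supplied by the hypothesis $\mathcal{H}_{H_\sigma(\cdot)}(E)=0$, and then let $\varepsilon\to 0$.

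Fix $\varphi\in C^{\infty}_c(\Omega)$ and $\varepsilon>0$. By the definition of $\mathcal{H}_{H_\sigma(\cdot)}$ via \eqref{Hgamma}, cover $E$ by balls $\{B_{r_i}(x_i)\}$, with $r_i$ arbitrarily small, whose total $H_\sigma$-content is less than $\varepsilon$. Glue together standard cutoffs --- $\eta_i=1$ on $B_{r_i/2}$, $\supp\eta_i\subset B_{r_i}$, $|D\eta_i|\lesssim 1/r_i$ --- into a single $\eta_\varepsilon\in C^{\infty}_c\bigl(\bigcup_i B_{r_i}\bigr)$, with $0\le\eta_\varepsilon\le 1$ and $\eta_\varepsilon\equiv 1$ on a neighbourhood of $E$. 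Since $(1-\eta_\varepsilon)\varphi$ is admissible as a test function for the equation on $\Omega\setminus E$, we obtain
\[
\int_\Omega A(x,Du)\cdot D\varphi\,(1-\eta_\varepsilon)\,\d x=\int_\Omega A(x,Du)\cdot D\eta_\varepsilon\,\varphi\,\d x,
\]
and dominated convergence (using $|E|=0$ and $|A(x,Du)|\in L^{1}_{\mathrm{loc}}(\Omega)$) gives that the left-hand side converges to the target integral as $\varepsilon\to 0$.

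The main obstacle is to show that the right-hand side above tends to zero with $\varepsilon$. The growth assumption \eqref{A} produces $|A(x,Du)|\lesssim|Du|^{p-1}+a(x)|Du|^{q-1}$, splitting the error into a $p$-part and a $q$-part. On each $B_{r_i}$, the Caccioppoli inequality combined with the H\"older bound $|u-u(x_i)|\le C_u r_i^{\beta_{0}}$ yields
\[
\int_{B_{r_i}}H(x,Du)\,\d x\lesssim r_i^{n-p+\beta_{0} p}+\|a\|_{L^\infty(B_{2r_i})}\,r_i^{n-q+\beta_{0} q},
\]
and a further (weighted) H\"older inequality converts the two pieces of the error into $r_i^{n-p+\beta_{0}(p-1)}$ and $\|a\|_{L^\infty(B_{2r_i})}\,r_i^{n-q+\beta_{0}(q-1)}$, respectively. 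The precise choice $\sigma=1-\tfrac{\beta_{0}(p-1)}{q}$ is designed exactly so that both contributions, summed over $i$, are controlled by the $H_\sigma$-content of the covering according to \eqref{Hgamma}; matching the $q$-piece relies crucially on the $\alpha$-H\"older continuity of $a$ and the balance \eqref{pq} between $p$, $q$ and $\alpha$. Summing over $i$, the total error is $\lesssim\varepsilon$, and letting $\varepsilon\to 0$ concludes the proof.
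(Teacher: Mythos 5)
Your overall plan --- a cutoff/truncation argument where you test the equation on $\Omega\setminus E$ against $(1-\eta_\varepsilon)\varphi$ and then show the error term vanishes as the covering of $E$ shrinks --- is the classical shape of removability proofs, and your exponent bookkeeping is right: the choice $\sigma=1-\tfrac{\beta_0}{q}(p-1)$ is indeed tuned so that the $p$- and $q$-pieces of the error, each of size $r_i^{n-p+\beta_0(p-1)}$ and $\|a\|_{L^\infty(B_{2r_i})}r_i^{n-q+\beta_0(q-1)}$, are dominated by $h_{H_\sigma}(B_{r_i})$. However, the argument has a genuine gap at its core, and it is precisely the gap the paper's machinery is built to close.

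The problem is the two uses of a Caccioppoli estimate on balls $B_{r_i}(x_i)$ centered at points $x_i\in E$. Such an estimate requires testing the equation with $\eta^q\bigl(u-u(x_i)\bigr)$ for a cutoff $\eta$ supported in $B_{2r_i}(x_i)$; but $u$ only solves \eqref{A0} in $\Omega\setminus E$, and $\eta^q\bigl(u-u(x_i)\bigr)$ does not vanish near $E\cap B_{r_i}(x_i)$, so it is not an admissible test function. You cannot repair this by introducing a second cutoff $\zeta$ that vanishes near $E$: the resulting extra term $\int H\bigl(x,|D\zeta|\,|u-u(x_i)|\bigr)$ need not tend to zero as $\zeta\to 1$ unless one already has control on $Du$ near $E$, which is circular. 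The same obstruction sinks the ``preliminary point'' $u\in W^{1,H(\cdot)}_{\mathrm{loc}}(\Omega)$: proving that $u$ has Sobolev regularity \emph{across} $E$ is essentially the content of the theorem, not a lemma one gets for free. (Compare Corollary~\ref{coro:H0}, where membership of $u$ in $W^{1,H(\cdot)}(\Omega)$ is assumed and the stronger smallness condition $\mathcal{H}_{H(\cdot)}(E)=0$ makes a direct cutoff argument work; Theorem~\ref{T6} deliberately drops the global Sobolev hypothesis.)

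The paper's route is designed to sidestep exactly this: one passes to the solution $v\in\mathcal{K}_u(U)$ of the obstacle problem \eqref{obs} with obstacle $u$. By construction $v$ is a \emph{supersolution} to \eqref{A0} on \emph{all} of $U$, including across $E$, so the Caccioppoli inequality (Lemma~\ref{lem-cacc}) and the Harnack estimates (Proposition~\ref{p6}) can legitimately be applied to $v$. The H\"older hypothesis enters through the obstacle: Lemma~\ref{osc} uses it to show that the Riesz measure $\mu=-\diver A(x,Dv)$ satisfies the intrinsic decay $\mu(B_\rr(\bar x))\le c\int_{B_\rr(\bar x)}H_\sigma(x,\rr^{-1})\,\dx$ for $\bar x\in E$. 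Then $\mathcal{H}_{H_\sigma(\cdot)}(E)=0$ forces $\mu(E\cap U)=0$; a separate argument based on the structure of the obstacle problem gives $\mu(U\setminus E)=0$; running the same construction with $\hat A(x,z):=-A(x,-z)$ and obstacle $-u$ produces $\hat v$ with $-\hat v\le u\le v$ and $\hat v=-v$, whence $u=v$ solves \eqref{A0} in $U$. If you want to keep the flavour of your direct argument, you would have to replace every Caccioppoli estimate on $u$ with one on such an obstacle-problem solution (or otherwise establish, independently, that $u\in W^{1,H(\cdot)}_{\mathrm{loc}}(\Omega)$ with the claimed decay), and then transfer the conclusion back to $u$ by a comparison step.
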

This matter received lots of attention in the past decades: see \cite{ca,koma,kizo} for the first works on the standard or power growth case and \cite{ono,hi} for problems involving also lower-order terms. More recently the $p$-Laplace structure has been relaxed to more flexible ones involving nonstandard growth conditions of various types. The related variational functionals include the variable-exponent integrand fundamental in modelling electrorheological fluids
\begin{flalign}\label{px}
w\mapsto \mathcal{F}_{p(\cdot)}(w,\Omega):=\int_{\Omega}\snr{Dw}^{p(x)} \ \dx,
\end{flalign}
the double phase energy describing strongly inhomogeneous materials
\begin{flalign}\label{dp-f}
w\mapsto \mathcal{F}_{H(\cdot)}(w,\Omega):=\int_{\Omega}\snr{Dw}^{p}+a(x)\snr{Dw}^{q} \ \dx,
\end{flalign}
as well as the so-called $\varphi$-functional, defined by means of an $N$-functions $\varphi$ cf.~\cite{cima}, and involved in the modelling of non-Newtonian fluids
\begin{flalign}\label{phi}
w\mapsto \mathcal{F}_{\varphi}(w,\Omega):=\int_{\Omega}\varphi(\snr{Dw}) \ \dx,
\end{flalign}or more generally
\begin{flalign}\label{phix}
w\mapsto \mathcal{F}_{\varphi(\cdot)}(w,\Omega):=\int_{\Omega}\varphi(x,\snr{Dw}) \ \dx,
\end{flalign}
see \cite{IC-pocket, IC-b, hahab} for more details. In the nonstandard growth framework, the problem of removability of sets have been studied in the case of \eqref{px} in \cite{fush,lat} and of~\eqref{phi} in \cite{chaly}. We shall provide the related result in the inhomogeneous setting \eqref{dp-f}.  A remarkable difference between our work and the aforementioned papers is that the main estimates involve the  intrinsic capacities and the intrinsic Hausdorff measures introduced recently in \cite{bahaha,demi} respectively. These novel concepts seem to be very natural in the general Musielak-Orlicz setting. The setting of the double phase spaces, which we employ, is of high interest recently. They appeared originally in the context of homogenization and the Lavrentiev phenomenon~\cite{zh}.  Recently, regularity theory in this setting is getting increasing attention \cite{bacomi-st,bacomi,comi,comib,deoh,depa,Ok}, see also \cite{depq,de,demi} for the manifold-constrained case and \cite{dark} for a reasonable survey on older results. The growth of the operator we investigate is trapped between two power-type functions following the ideas of~\cite{m1,m2}. The inhomogeneity of our setting results from the fact that the modulating coefficient $a$ can vanish: on $\{x\in \Omega\colon a(x)=0\}$, \eqref{H} shows $p$-growth, whereas on $\{x\in\Omega: a(x) > 0\}$, it is behaves like an $N$-function $\varphi(x,\cdot)$.  See comments around \eqref{adeg}-\eqref{andeg} below for further clarifications on this matter. It is the regularity of the weight function $a$ that dictates the ellipticity rate of the energy density indicating the range of parameters necessary to ensure good properties of the space such as density of smooth functions.  In fact, as far as a variable exponent is expected to be log-H\"older continuous, here the exponents should be close to each other. The optimal closeness condition in the case of $a\in C^{0,\alpha}(\Omega)$ is~\eqref{pq}, see~\cite{eslemi,comi} and Remark~\ref{rem:density}. The relation between the double phase space and the variable exponent one is exposed in~\cite{bacomi-st}.  More information on functional analysis of our setting can be found in Section~\ref{sec:prelim}. The non-uniform ellipticity of the operator entails essential structural difficulties in the proof of Theorem~\ref{T6} in comparison to the standard growth case. Nonetheless, the main idea is the same and it involves certain regularity properties of~solutions to the obstacle problem associated to~\eqref{A0}. We stress that the tools applied in this paper have been defined for more general structures than the one described in Section~\ref{dp}, i.e. for $H(\cdot)$ substituted by the so-called generalized Young functions $\varphi(\cdot)$. Hence, we expect that 
analogous results to those reported in Theorems~\ref{T6}-\ref{T4} hold for quasilinear equations modelled upon~\eqref{phix}. For more details and further extensions we refer the reader to \cite{bahaha,demi,hahale}. Since it can be of independent interest, we do not restrict ourselves to the regularity theory for solutions to the obstacle problem related to~\eqref{A0} necessary for the proof of  Theorem~\ref{T6}. In this place, besides~\cite{choe}, we shall mention that there are studies carried in various types of nonstandard growth settings starting from the variable exponent setting~\cite{ha-o,ele,ele-1,ele-2} and Orlicz~\cite{lib} as well as other~\cite{Ok-o1,Ok-o2}. We prove that solutions share the same features of the obstacle, that is higher integrability, boundedness, continuity, and H\"older continuity, respectively.  Let us present the obstacle problem we study. We consider the set
\begin{flalign}\label{con}
\mathcal{K}_{\psi,g}(\Omega):=\left\{ v\in W^{1,H(\cdot)}(\Omega)\colon v\ge \psi \ \  \mbox{a.e. in} \ \Omega \ \ \mbox{and} \ \ v-{g}\in W^{1,H(\cdot)}_{0}(\Omega)  \right\},
\end{flalign}
where $\psi\in W^{1,H(\cdot)}(\Omega)$ is the obstacle and $g \in W^{1,H(\cdot)}(\Omega)$ is the boundary datum. By a~solution to the obstacle problem we mean a function $v\in \mathcal{K}_{\psi,g}(\Omega)$ satisfying
\begin{flalign}\label{obs}
\int_{\Omega}A(x,Dv)\cdot D(w-v) \ \dx \ge 0 \ \ \mbox{for all } \ w\in \mathcal{K}_{\psi,g}(\Omega).
\end{flalign} 
By a supersolutions to \eqref{A0} we mean $\tilde{v}\in W^{1,H(\cdot)}(\Omega)$ satisfying
\begin{flalign}\label{sux}
\int_{\Omega}A(x,D\tilde{v})\cdot Dw \ \dx \ge0 \quad \mbox{for all non-negative} \ w\in W^{1,H(\cdot)}_{0}(\Omega).
\end{flalign}
Notice that a solution to problem \eqref{obs} is a supersolution to \eqref{A0}, we just need to test \eqref{obs} against $w:=v+\tilde{w}$, where $\tilde{w} \in W^{1,H(\cdot)}_{0}(\Omega)$ is any non-negative function. Since $\tilde{w}\in \mathcal{K}_{\psi,g}(\Omega)$,  the outcome is precisely the variational inequality \eqref{sux}. We provide the following result on existence and basic regularity for the obstacle problem. 
\begin{theorem}\label{T4}
Under assumptions \eqref{pq} and \eqref{A}{, let $\psi,g\in W^{1,H(\cdot)}(\Omega)$ be such that $\mathcal{K}_{\psi,g}(\Omega)\not =\emptyset$. Then, there exists a unique $v\in \mathcal{K}_{\psi,g}(\Omega)$, solution to the obstacle problem \eqref{obs}. Moreover, the following holds true.}
\begin{itemize}
    \item[-](Gehring's Theory). If $H(\cdot,D\psi)\in L^{1+\delta_{1}}_{\mathrm{loc}}(\Omega)$ for some $\delta_{1}>0$, then there exists an integrability threshold $ {\delta}_{0}\in (0,\delta_{1})$ {such that for any $\delta \in (0, {\delta}_{0})$, $H(\cdot,Dv)\in L^{{1+\delta}}_{\mathrm{loc}}(\Omega)$} and the following {reverse--H\"older--type}  inequality holds for all $B_{\rr}\Subset \Omega$:
    \begin{flalign}\label{in:ge}
\left(\mint_{B_{\rr/2}}H(x,Dv)^{1+\delta} \ \dx\right)^{\frac{1}{1+\delta}}\le c\left(\int_{B_{\rr}}H(x,D\psi)^{1+\delta} \ \dx\right)^{\frac{1}{1+\delta}}+c\mint_{B_{\rr}}H(x,Dv) \ \dx.
\end{flalign}
Here $c=c(\data, \nr{Dv}_{L^{p}(\Omega)},\nr{D\psi}_{L^{p}(\Omega)})$ and $ {\delta}_{0}= {\delta}_{0}(\data, \nr{Dv}_{L^{p}(\Omega)},\nr{D\psi}_{L^{p}(\Omega)})$.
\vspace{1mm}
\item[-](De Giorgi's Theory). If $\psi \in W^{1,H(\cdot)}(\Omega)\cap L^{\infty}(\Omega)$, then $v\in L^{\infty}_{\mathrm{loc}}(\Omega)$ and, for any open set $\tilde{\Omega}\Subset \Omega$,
\begin{flalign}\label{v-bounded}
\nr{v}_{L^{\infty}(\tilde{\Omega})}\le c(\data,\nr{Dv}_{L^{p}(\Omega)},\nr{H(\cdot,v)}_{L^{1}(\Omega)},\nr{\psi}_{L^{\infty}(\Omega)}).
\end{flalign}
\item[-]{(Continuity and $\mathcal{A}_{H(\cdot)}$-harmonicity). If $\psi\in W^{1,H(\cdot)}(\Omega)\cap C(\Omega)$, then $v$ is continuous and solves \eqref{A0} in the open set $\{x\in \Omega\colon v(x)>\psi(x)\}$.}
\vspace{1mm}
\item[-]{(H\"older regularity). If $\psi \in W^{1,H(\cdot)}(\Omega)\cap C^{0,\beta_{0}}(\Omega)$ for some $\beta_{0}\in (0,1]$, then $v\in C^{0,\beta_{0}}_{\mathrm{loc}}(\Omega)$ and, for all open sets $\tilde{\Omega}\Subset \Omega$, there holds
\begin{flalign}\label{v-Holder}
[v]_{0,\beta_{0};\tilde{\Omega}}\le c(\data,\nr{H(\cdot,Dv)}_{L^{1}(\Omega)},\nr{\psi}_{L^{\infty}(\Omega)},[\psi]_{0,\beta_{0}}).
\end{flalign}}
\end{itemize}
\end{theorem}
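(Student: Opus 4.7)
The plan is to tackle the five assertions in turn, adapting the standard obstacle-problem template to the Musielak--Orlicz ambient $W^{1,H(\cdot)}(\Omega)$ and relying throughout on the balance \eqref{pq} to secure reflexivity, density of smooth functions, and a Sobolev--Poincar\'e inequality for $H(\cdot)$. For \textbf{existence and uniqueness}, I would observe that $\mathcal{A}_{H(\cdot)}$ in \eqref{calAH} is strictly monotone, coercive, and hemicontinuous by \eqref{A}; since $\mathcal{K}_{\psi,g}(\Omega)$ is a nonempty closed convex subset of the reflexive space $W^{1,H(\cdot)}(\Omega)$, the Browder--Minty--Lions theory for monotone variational inequalities supplies a unique solution $v$ of \eqref{obs}.

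For the \textbf{Gehring-type self-improvement} I would test \eqref{obs} with the admissible choice $w:=v-\eta^{q}(v-\psi)\in\mathcal{K}_{\psi,g}(\Omega)$, where $\eta\in C^{\infty}_{c}(B_{\rr})$ is a standard cutoff; expansion via \eqref{A} yields a Caccioppoli-type inequality of the form
\begin{flalign*}
\mint_{B_{\rr/2}}H(x,Dv)\dx \le c\,\mint_{B_{\rr}}H\Bigl(x,\tfrac{v-\psi}{\rr}\Bigr)\dx+c\,\mint_{B_{\rr}}H(x,D\psi)\dx.
\end{flalign*}
A Sobolev--Poincar\'e estimate for $H(\cdot)$ then turns the first term into one involving a subunit exponent of $H(\cdot,Dv)$, producing a genuine reverse H\"older inequality, and a Musielak--Orlicz version of Gehring's lemma yields \eqref{in:ge} with some $\delta_{0}>0$. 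The \textbf{$L^{\infty}$ bound} would be obtained analogously: for $k>\nr{\psi}_{L^{\infty}(\Omega)}$ the test $w:=v-\eta^{q}(v-k)_{+}$ is admissible in $\mathcal{K}_{\psi,g}(\Omega)$, and the resulting Caccioppoli inequality on the super-level sets $\{v>k\}$ places $v$ in a De Giorgi class adapted to $H(\cdot)$, from which level--set iteration delivers \eqref{v-bounded}.

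For \textbf{continuity and $\mathcal{A}_{H(\cdot)}$-harmonicity in $\{v>\psi\}$}, on any open set $U\Subset\{v>\psi\}$ continuity of $\psi$ and $v$ yields $\varepsilon_{0}>0$ such that $v+\varepsilon\f\ge\psi$ on $U$ for every $\f\in C^{\infty}_{c}(U)$ and $\snr{\varepsilon}<\varepsilon_{0}$. Inserting $w:=v+\varepsilon\f$ into \eqref{obs} with both signs of $\varepsilon$ would show that $v$ is a local energy solution of \eqref{A0} in $U$; interior continuity of $\mathcal{A}_{H(\cdot)}$-harmonic maps in the double-phase framework then hands over continuity of $v$ on $\{v>\psi\}$, while on the coincidence set continuity of $v$ follows from $v\equiv\psi$ and continuity of $\psi$.

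The \textbf{H\"older regularity} assertion is what I expect to be the main obstacle. I would fix $B_{\rr}\Subset\Omega$ and introduce the $\mathcal{A}_{H(\cdot)}$-harmonic replacement $h$ of $v$ on $B_{\rr}$, namely the unique solution of \eqref{A0} in $B_{\rr}$ with $h-v\in W^{1,H(\cdot)}_{0}(B_{\rr})$. Testing \eqref{obs} with the admissible $w:=\max\{v,h\}$ and pairing with the equation for $h$ yields $v\ge h$ pointwise together with, via monotonicity of $A$, a quantitative energy control of $v-h$ by the obstacle oscillation $\osc_{B_{\rr}}\psi\le[\psi]_{0,\beta_{0}}\rr^{\beta_{0}}$. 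Combined with interior H\"older (in fact Lipschitz) regularity of $h$ available in the double-phase setting, a Campanato-type dichotomy on dyadic balls should upgrade the oscillation decay of $v$ to the exponent $\beta_{0}$ and deliver \eqref{v-Holder}. The structural difficulty shared by every step is the non-uniform ellipticity stemming from the vanishing weight $a$: Caccioppoli, comparison, and iteration arguments must accommodate the switch between $p$-growth on $\{a=0\}$ and effective $q$-growth on $\{a>0\}$, and it is precisely \eqref{pq} that makes the Gehring and De Giorgi self-improvements work and allows the H\"older exponent transfer without loss.
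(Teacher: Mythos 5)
Your roadmap for existence/uniqueness (monotone operator theory), the Gehring step (test with $w=v-\eta^{q}(v-\psi)$, Sobolev--Poincar\'e, then a Gehring-type lemma) and the De Giorgi $L^{\infty}$ bound (level-set truncation and iteration) matches the paper's proof in substance. For H\"older regularity you propose a genuinely different route: comparison with the $\mathcal{A}_{H(\cdot)}$-harmonic replacement $h$ on a ball, using $v\ge h$ and the Lipschitz regularity of $h$ plus a Campanato iteration. That is a viable classical alternative; the paper instead derives the oscillation decay directly from weak Harnack inequalities for supersolutions (inequalities \eqref{30} and \eqref{32}), applied with carefully chosen shifts $\vartheta_{\pm}$ built from the obstacle's oscillation, avoiding the need to quantify the energy gap between $v$ and $h$.

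The continuity step, however, has a genuine gap: the argument is circular. You write ``on any open set $U\Subset\{v>\psi\}$ continuity of $\psi$ \emph{and $v$} yields $\varepsilon_{0}>0$ such that $v+\varepsilon\varphi\ge\psi$'', but continuity of $v$ is precisely what you are trying to prove. Without it, $\{v>\psi\}$ is only defined up to a null set and is not known to be open, so the perturbation argument for $\mathcal{A}_{H(\cdot)}$-harmonicity cannot get started. Moreover, even granting that $v$ is $\mathcal{A}_{H(\cdot)}$-harmonic on the non-contact region and continuous \emph{there} by interior regularity, and that $v\equiv\psi$ on the contact set, this does not yield continuity of $v$ \emph{across} the free boundary $\partial\{v>\psi\}\cap\Omega$, which is the actual difficulty. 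The paper resolves this by first extracting a lower-semicontinuous representative of $v$ via the weak Harnack inequality \eqref{32} for the supersolution $v-\inf_{B_{\rr}}v$, and only then obtaining upper semicontinuity from the sup-bound \eqref{30} applied with the level $M=v(x_{0})+\varepsilon$ (admissible because $\psi$ is continuous and $v\ge\psi$); continuity is a consequence, and the $\mathcal{A}_{H(\cdot)}$-harmonicity on $\{v>\psi\}$ is then deduced \emph{after} continuity is in hand, exactly reversing your order of implications. To repair your argument you would need a supersolution-Harnack (or an $\mathcal{A}_{H(\cdot)}$-superharmonic representative) step before touching the set $\{v>\psi\}$.
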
 
Let us sum up the organization of the paper. Section~\ref{sec:prelim} introduces main assumptions and the functional setting. In Section~\ref{haus} we present the concept of intrinsic capacities and intristic Hausdorff--type measures both related to the energy density $H(\cdot)$. Section~\ref{o} is devoted to the study on the obstacle problem, while Section~\ref{sec:rem} to the proof of the main result on the removability.

\vspace{2mm}
\section{Preliminaries}\label{sec:prelim}
\subsection{Assumptions} Throughout the paper $\Omega \subset \mathbb{R}^{n}$, $n\ge 2$, is an open, bounded set. Let $A\in C(\Omega\times \mathbb{R}^{n},\mathbb{R}^{n})$ be a monotone vector field such that $z\mapsto A(\cdot,z)\in C^{1}(\mathbb{R}^{n}\setminus\{0\},\mathbb{R}^{n})$ and the following growth and coercivity assumptions hold true:
\begin{flalign}\label{A}
\begin{cases}
\ \snr{A(x,z)}+\snr{\partial A(x,z)}\snr{z}\le L\left(\snr{z}^{p-1}+a(x)\snr{z}^{q-1}\right),\\
\ \nu\left(\snr{z}^{p-2}+a(x)\snr{z}^{q-2}\right)\snr{\xi}^{2}\le \partial A(x,z)\xi\cdot \xi,\\
\ \snr{A(x_{1},z)-A(x_{2},z)}\le L\snr{a(x_{1})-a(x_{2})}\snr{z}^{q-1},
\end{cases}
\end{flalign}
whenever $x_{1}$, $x_{2}\in \Omega$, $z\in \mathbb{R}^{n}\setminus \{0\}$, $x\in \Omega$ and $\xi \in \mathbb{R}^{n}$. Here, $\partial$ denotes the partial derivative with respect to the gradient variable $z$, while the modulating coefficient $a\colon \Omega\to [0,\infty)$ is a~non-negative and $\alpha$-H\"older continuous function for some $\alpha\in (0,1]$. We assume that the exponents $p$, $q$ appearing in \eqref{H} and the H\"older continuity exponent $\alpha$ mentioned above satisfy the relations
\begin{flalign}\label{pq}
\frac{q}{p} \le 1+\frac{\alpha}{n} \quad \mbox{and} \quad 1<p < q< n.
\end{flalign}
For brevity we  collect the main parameters of the problem in the quantities
{\it \begin{flalign*}
\begin{cases}\ \data:=(n,\nu,L,p,q,[a]_{0,\alpha}),\\
\ \data_{\psi}:=(n,\nu,L,p,q,[a]_{0,\alpha}, \nr{H(\cdot,D\psi)}_{L^{1}(\Omega)}, \nr{\psi}_{L^{\infty}(\Omega)}),\\
\ \data_{u}:=(n,\nu,L,p,q,[a]_{0,\alpha}, \nr{H(\cdot,Du)}_{L^{1}(\Omega)},\nr{u}_{L^{\infty}(\Omega)}).
\end{cases}
\end{flalign*}}
\subsection{Notation}\label{sec:not}  We collect here basic remarks on the notation we use throughout the paper. Following a usual custom, we denote by $c$ a general constant larger than one. Different occurrences from line to line will be still denoted by $c$, while special occurrences will be denoted by $\tilde{c},\bar{c}$ or similarly. Relevant dependencies on parameters will be emphasized using parentheses, e.g. $c=c(n,p,q)$ means that $c$ depends on $n,p,q$. If $t \in \{p,q\}$, by $t'$ we mean the H\"older conjugate of $t$, i.e. $t'=t/(t-1),$ whereas by $t^{*}$ its Sobolev conjugate, i.e. $t^{*}=tn/(n-t)$ (recall $\eqref{pq}_{2}$). We denote by $B_{\rr}(x_{0}):=\left\{x\in \mathbb{R}^{n}\colon \snr{x-x_{0}}<\rr\right\}$ the open ball with center $x_{0}$ and radius $\rr>0$. When it is not important, or clear from the context, we shall omit denoting the center as follows: $B_{\rr}(x_{0})\equiv B_{\rr}$. Very often, when it is not otherwise stated, different balls will share the same center. When the ball $B$ is given we occasionally denote its radius as $\rr(B)$. With $U\subset \mathbb{R}^{n}$ being a~measurable set with finite and positive $n$-dimensional Lebesgue measure $\snr{U}>0$, and with $f\colon U\to \mathbb{R}^{k}$, $k\ge 1$ being a measurable map, by 
\begin{flalign*}
(f)_{U}:=\mint_{U}f(x) \ \dx =\frac{1}{\snr{U}}\int_{U}f(x) \ \dx
\end{flalign*}
we mean the integral average of $f$ over $U$. With $h\colon \Omega\to \mathbb{R}$, $U\subset \Omega$, and $\gamma \in (0,1]$ being a~given number, we shall denote
\begin{flalign*}
[h]_{0,\gamma;U}:=\sup_{\substack{x,y\in U,\\x\not =y}}\frac{\snr{h(x)-h(y)}}{\snr{x-y}^{\gamma}}, \qquad [h]_{0,\gamma}\equiv [h]_{0,\gamma;\Omega}.
\end{flalign*} 
{Recall that in~\eqref{con} we defined $\mathcal{K}_{\psi,g}(\Omega)$ with an obstacle $\psi$ and the boundary datum $g \in W^{1,H(\cdot)}(\Omega)$.  By $\mathcal{K}_{\psi}(\Omega)$ we denote $\mathcal{K}_{\psi,g}(\Omega)$ with $\psi\equiv g$.}

\subsection{Double phase energy}\label{dp}  Let us present the main properties of energy density $H(\cdot)$ given by~\eqref{H} under assumption \eqref{pq}. With abuse of notation, we shall keep on denoting $H(x,t)=t^{p}+a(x)t^{q}$ for $t\ge 0$, that is, when in \eqref{H}, $z$ is a non-negative number. For our purposes, it is enough to recall that, as a generalized Young function,  $x\mapsto H(x,\cdot)$ is $\alpha$-H\"older continuous, $t\mapsto H(\cdot,t)$  is strictly convex and belongs to $C^{1}([0,\infty))\cap C^{2}((0,\infty))$. By the Fenchel--Young conjugate of $H$, we mean the function $H^{*}(x,t):=\sup_{s\ge 0}\left\{st-H(x,s)\right\}$. Direct consequences of the definition of $H^{*}$ are the following equivalence
\begin{flalign}\label{ex3}
H^{*}\left(x, {H(x,t)}/{t}\right)\sim H(x,t) \quad\text{for all }\ (x,t)\in \Omega \times \mathbb{R}^{n},
\end{flalign}
holding up to constants depending only on $p$ and $q$,
and the Young inequalities 
\begin{flalign*}
\begin{cases}
\ st\le \varepsilon H^{*}(x,t)+\varepsilon^{1-q}H(x,s),\\
\ st \le \varepsilon H(x,s)+\varepsilon^{-\frac{1}{p-1}}H^{*}(x,t),
\end{cases}
\end{flalign*}
holding for all $x\in \Omega$, $s,t\in [0,\infty)$, and $\varepsilon\in(0,1)$, see \cite{bacomi,demi}. We shall often deal with the vector field
\begin{flalign*}
V_{t}(z):=\snr{z}^{\frac{t-2}{2}}z, \ \ t\in \{p,q\},
\end{flalign*}
which is of a common use to formulate the monotonicity properties of operators of the $p$-Laplacean type and related integral functionals. In this respect we record the following pointwise property
\begin{flalign*}0\le
\snr{V_{t}(z_{1})-V_{t}(z_{2})}^{2}\le c(n,t)\langle \snr{z_{1}}^{t-2}z_{1}-\snr{z_{2}}^{t-2}z_{2},z_{1}-z_{2}\rangle, \ \ t \in \{p,q\},
\end{flalign*}
for all $z_{1},z_{2}\in \mathbb{R}^{n}$. This, combined with $\eqref{A}_{2}$, renders that
\begin{flalign}\label{A-monotone}
0\le \mathcal{V}(z_{1},z_{2}):=&\snr{V_{p}(z_{1})-V_{p}(z_{2})}^{2}+a(x)\snr{V_{q}(z_{1})-V_{q}(z_{2})}^{2}\nonumber \\
\le &\,c\,\langle A(x,z_{1})-A(x,z_{2}),z_{1}-z_{2}\rangle,
\end{flalign}
with some $c= c(n,\nu,p,q)>0$ for all $x\in \Omega$ and any $z_{1},z_{2}\in \mathbb{R}^{n}$. Now, let us recall some common terminology in the framework of double phase functionals. For $B_{\rr}\subset \Omega$, we define
\begin{flalign*}
a_{i}(B_{\rr}):=\inf_{x\in B_{\rr}}a(x) \qquad \mbox{and} \qquad a_{s}(B_{\rr}):=\sup_{x\in B_{\rr}}a(x). 
\end{flalign*}
We will say that $a(\cdot)$ degenerates on $B_{\rr}$ if there holds
\begin{flalign}\label{adeg}
a_{i}(B_{\rr})\le 4[a]_{0,\alpha}\rr^{\alpha}.
\end{flalign}
The complementary condition reads as
\begin{flalign}\label{andeg}
a_{i}(B_{\rr})>4[a]_{0,\alpha}\rr^{\alpha}.
\end{flalign}
Those two conditions in some sense quantify the closeness of $a(\cdot)$ to {the set} of its zero points. Since in regime \eqref{adeg}, $a_{s}(B_{\rr})\le 6[a]_{0,\alpha}\rr^{\alpha}$, it follows that \eqref{adeg} is stable when $\rr$ increases. {On the other hand  under \eqref{andeg} we have $a_{s}(B_{\rr})\le 2a_{i}(B_{\rr})$, so} \eqref{andeg} is stable for shrinking balls. In accordance with this terminology, we also mention the auxiliary Young's functions
\begin{flalign*}
H^{-}_{B_{\rr}}(z):=\snr{z}^{p}+a_{i}(B_{\rr})\snr{z}^{q} \quad \mbox{and} \quad H^{+}_{B_{\rr}}(z):=\snr{z}^{p}+a_{s}(B_{\rr})\snr{z}^{q},
\end{flalign*}
which turn out to be useful in dealing with the regularity theory for double/multi phase functionals, see \cite{bacomi,comib,comi,demi,deoh}. {In the following, we will also consider the double phase integrand
\begin{flalign}\label{Hgamma}
{H_\sigma}(x,z):=\snr{z}^{p\sigma}+a(x)^{\sigma}\snr{z}^{q\sigma}, \ \ \frac{1}{p}<\sigma\le1.
\end{flalign}
Clearly $ {{H_\sigma}}(x,z)\sim (H(x,z))^{\sigma}$ for all $(x,z)\in \Omega\times \mathbb{R}^{n}$, in fact
\begin{flalign}\label{Hgamma-comp}
H^{\sigma}(x,z)\le {H_\sigma}(x,z)\le 2H^{\sigma}(x,z)  \ \ \mbox{for all} \ \ (x,z)\in \Omega\times \mathbb{R}^{n}.
\end{flalign}}

\subsection{Fuctional setting}\label{ssec:setting} There are various approaches how to describe general Musielak-Orlicz spaces, cf.~\cite{hahab,IC-pocket,IC-b}. We shall specialize them from the very beginning to those related to energy density $H(\cdot)$ given by~\eqref{H}. We define Musielak-Orlicz space 
\begin{flalign*}
L^{H(\cdot)}(\Omega):=\left\{w\in L^{1}_{\mathrm{loc}}(\Omega)\colon \int_{\Omega}H(x,w) \ \dx <\infty \right\},
\end{flalign*}
equipped with the Luxemburg norm
\begin{flalign*}
\nr{w}_{L^{H(\cdot)}(\Omega)}:=& 
\inf\left\{\lambda> 0\colon \int_{\Omega}H\left(x,\tfrac{1}{\lambda}w\right) \ \dx\le 1\right\}.
\end{flalign*}
{When we denote the modular 
$\rho_{H(\cdot)}(w):= \int_{\Omega}H\left(x,w\right)\dx,$
the structure of $H(\cdot)$ ensures that
\begin{flalign}\label{nr1}
\min\left\{\left(\rho_{H(\cdot)}(w)\right)^{\frac{1}{p}},\left(\rho_{H(\cdot)}(w)\right)^{\frac{1}{q}}\right\}\le \nr{w}_{L^{H(\cdot)}(\Omega)}\le\max \left\{\left(\rho_{H(\cdot)}(w)\right)^{\frac{1}{p}},\left(\rho_{H(\cdot)}(w)\right)^{\frac{1}{q}}\right\}
\end{flalign}
and
\begin{flalign}\label{nr2}
\min\left\{\left(\rho_{H^{*}(\cdot)}(w)\right)^{\frac{q}{q+1}},\right.&\left.\left(\rho_{H^{*}(\cdot)}(w)\right)^{\frac{p}{p+1}}\right\}\le \nr{w}_{L^{H^*(\cdot)}(\Omega)}\nonumber \\
\le&\max \left\{\left(\rho_{H^{*}(\cdot)}(w)\right)^{\frac{q}{q+1}},\left(\rho_{H^{*}(\cdot)}(w)\right)^{\frac{p}{p+1}}\right\}
\end{flalign}}
see e.g. \cite{hahab,mawo}. In particular, if $v\in L^{H(\cdot)}(\Omega)$ and $w\in L^{H^{*}(\cdot)}(\Omega)$, we have the H\"older inequality
{\begin{flalign*}
\left | \ \int_{\Omega}vw \ \dx \  \right |\le& 2\nr{v}_{L^{H(\cdot)}(\Omega)}\nr{w}_{L^{H^{*}(\cdot)}(\Omega)}\nonumber \\
\le& 2\max \left\{\left(\rho_{H(\cdot)}(w)\right)^{\frac{1}{p}},\left(\rho_{H(\cdot)}(w)\right)^{\frac{1}{q}}\right\} \max \left\{\left(\rho_{H^{*}(\cdot)}(w)\right)^{\frac{q}{q+1}},\left(\rho_{H^{*}(\cdot)}(w)\right)^{\frac{p}{p+1}}\right\},
\end{flalign*}}
where in the last inequality \eqref{nr1} and \eqref{nr2} are employed. Since the nonlinear tensor $A(\cdot)$ satisfies \eqref{A}, problem \eqref{A0} is naturally set in the Musielak-Orlicz-Sobolev space
\begin{flalign*}
W^{1,H(\cdot)}(\Omega):=\left\{w\in W^{1,1}(\Omega)\colon 
\ w,|Dw|\in {L^{H(\cdot)}(\Omega)}\right\},
\end{flalign*}
equipped with the norm $ \nr{w}_{W^{1,H(\cdot)}(\Omega)}:=  \nr{w}_{L^{H(\cdot)}(\Omega)}+\nr{Dw}_{L^{H(\cdot)}(\Omega )}$. Upon such a definition $W^{1,H(\cdot)}(\Omega)$ is a Banach space, which, due to the properties of $H(\cdot)$, is separable and reflexive. The dual space can be characterized by the means of the Fenchel-Young conjugate of $H(\cdot)$, namely we have $(W^{1,H(\cdot)}(\Omega))^{*}\sim W^{1,H^{*}(\cdot)}(\Omega)$.  Space $W^{1,H(\cdot)}_{\mathrm{loc}}(\Omega)$ is defined in the standard way.  We shall also define zero-trace space $W^{1,H(\cdot)}_{0}(\Omega)$ as a closure in $W^{1,H(\cdot)}(\Omega)$ of $C^{\infty}_{c}(\Omega)$-functions. Justification of this choice of definition requires some comments, since it is known that in inhomogeneous spaces smooth functions can be not dense~\cite{eslemi,fomami,zh}. 
\begin{remark}\label{rem:density}\rm In general, to get density of regular functions (smooth/Lipschitz) in norm in Musielak-Orlicz-Sobolev spaces, besides the (doubling) type of growth of $H(\cdot)$, its speed of growth has to be balanced with the regularity in the spacial variable, see the general study in~\cite{yags}. In turn, the assumption ensuring that this definition of $W^{1,H(\cdot)}_{0}(\Omega)$ makes sense, is closeness condition~\eqref{pq} imposed on the powers. In fact, the natural topology for Musielak-Orlicz-Sobolev spaces is the modular one, i.e. the one coming from  the notion of modular convergence~\cite{yags,IC-b,hahab}. We say that a sequence $(w_j)_{j \in \N}\subset L^{H(\cdot)}(\Omega)$ converges to $w$ modularly in $L^{H(\cdot)}(\Omega)$ if
\begin{flalign*}
\lim_{j\to \infty}w_{j}(x)=w(x) \ \ \mbox{for a.e.} \ x\in \Omega \quad \mbox{and}\quad \lim_{j\to \infty}\int_{\Omega}H(x,w_{j}-w) \, \dx=0.
\end{flalign*}
Consequently, $w_j\to w$ modularly in $W^{1,H(\cdot)}(\Omega)$ if both $w_j\to w$ and $Dw_j\to Dw$ modularly in $L^{H(\cdot)}(\Omega)$. Since the growth of $H$ is doubling, the modular convergence is equivalent to the norm convergence~\cite{IC-b,hahab}. Condition~\eqref{pq} was introduced and proven to be sharp for modular density in $W^{1,H(\cdot)}_{0}(\Omega)$ in~\cite{eslemi}. It plays the role of the assumption on log--H\"older continuity of the variable exponent, cf.~\cite{bacomi-st}. \end{remark}
Let us state a density lemma in the form useful in our investigations.
\begin{lemma}\cite{eslemi}\label{l1}
Under assumption \eqref{pq}, for any given $w\in W^{1,H(\cdot)}(\Omega)\cap W^{1,1}_0(\Omega)$ there exists a sequence $(w_{j})_{j \in \mathbb{N}}\subset C^{\infty}_{c}(\Omega)$ so that $(w_{j})_{j\in \N}$ converges to $w$ almost everywhere and in $W^{1,p}(\Omega)$, whereas $|Dw_j- Dw|\to 0$ modularly in $L^{H(\cdot)}(\Omega)$.
\end{lemma}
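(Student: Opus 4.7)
The plan is to build $w_{j}$ as mollifications of compactly supported cut-offs of $w$. Every step is classical except the one controlling the inhomogeneous modular term $\int_{\Omega}a(x)\snr{Dw_{j}-Dw}^{q}\dx$; this is where the balance condition \eqref{pq} enters at its sharp threshold.

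As a first step I would reduce to compactly supported data. Since $w\in W^{1,1}_{0}(\Omega)$, extending $w$ by zero to $\mathbb{R}^{n}$ gives a $W^{1,1}(\mathbb{R}^{n})$-function. For $\delta>0$, fix a cutoff $\eta_{\delta}\in C^{\infty}_{c}(\Omega)$ with $\eta_{\delta}\equiv 1$ on $\{\dist(\cdot,\partial\Omega)>2\delta\}$ and $\snr{D\eta_{\delta}}\le c/\delta$, and put $\tilde{w}_{\delta}:=\eta_{\delta}w$. Then $\tilde{w}_{\delta}\to w$ almost everywhere and in $W^{1,p}(\Omega)$; the modular convergence $\snr{D\tilde{w}_{\delta}-Dw}\to 0$ in $L^{H(\cdot)}(\Omega)$ reduces by dominated convergence to controlling $\int_{\Omega}a(x)\snr{w}^{q}\snr{D\eta_{\delta}}^{q}\dx$, which vanishes since $w$ has zero trace on $\partial\Omega$.

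For the main step, fix $\delta>0$ and mollify: for $\varepsilon<\delta/2$ set $w_{\varepsilon,\delta}:=\rho_{\varepsilon}*\tilde{w}_{\delta}\in C^{\infty}_{c}(\Omega)$, where $\rho_{\varepsilon}$ is a standard smooth mollifier. Classical theory yields $w_{\varepsilon,\delta}\to \tilde{w}_{\delta}$ in $W^{1,p}(\Omega)$ and almost everywhere as $\varepsilon\to 0$, so it remains to show
\begin{flalign*}
\lim_{\varepsilon\to 0}\int_{\Omega}a(x)\snr{Dw_{\varepsilon,\delta}-D\tilde{w}_{\delta}}^{q}\dx=0.
\end{flalign*}
Via Vitali's theorem this reduces to producing a uniform-in-$\varepsilon$ equi-integrable majorant for $a(x)\snr{Dw_{\varepsilon,\delta}}^{q}$. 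Exploiting $\snr{a(x)-a(y)}\le [a]_{0,\alpha}\varepsilon^{\alpha}$ for $y\in\supp\rho_{\varepsilon}(x-\cdot)$ together with Jensen's inequality, one splits
\begin{flalign*}
a(x)\bigl(\rho_{\varepsilon}*\snr{D\tilde{w}_{\delta}}\bigr)^{q}(x)\le c\,\rho_{\varepsilon}*\bigl(a\snr{D\tilde{w}_{\delta}}^{q}\bigr)(x)+c[a]_{0,\alpha}\varepsilon^{\alpha}\bigl(\rho_{\varepsilon}*\snr{D\tilde{w}_{\delta}}\bigr)^{q}(x).
\end{flalign*}
The first term $L^{1}(\Omega)$-converges to $a\snr{D\tilde{w}_{\delta}}^{q}$ and is therefore equi-integrable. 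For the second, combining Young's convolution estimate $\nr{\rho_{\varepsilon}*f}_{L^{\infty}}\le c\varepsilon^{-n/p}\nr{f}_{L^{p}}$ with the pointwise bound $(\rho_{\varepsilon}*\snr{D\tilde{w}_{\delta}})^{q}\le \nr{\rho_{\varepsilon}*\snr{D\tilde{w}_{\delta}}}_{L^{\infty}}^{q-p}(\rho_{\varepsilon}*\snr{D\tilde{w}_{\delta}})^{p}$ gives
\begin{flalign*}
\varepsilon^{\alpha}\int_{\Omega}\bigl(\rho_{\varepsilon}*\snr{D\tilde{w}_{\delta}}\bigr)^{q}\dx\le c\,\varepsilon^{\alpha-\frac{n(q-p)}{p}}\nr{D\tilde{w}_{\delta}}_{L^{p}(\Omega)}^{q},
\end{flalign*}
a bounded quantity (vanishing when the inequality in \eqref{pq} is strict) precisely because $q/p\le 1+\alpha/n$; this is the sharp point of the argument.

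Finally, by choosing $\varepsilon_{j}\downarrow 0$ fast enough along some $\delta_{j}\downarrow 0$, the diagonal sequence $w_{j}:=w_{\varepsilon_{j},\delta_{j}}\in C^{\infty}_{c}(\Omega)$ converges to $w$ almost everywhere and in $W^{1,p}(\Omega)$, with $\snr{Dw_{j}-Dw}\to 0$ modularly in $L^{H(\cdot)}(\Omega)$. The main obstacle is exactly the uniform modular bound on $\rho_{\varepsilon}*D\tilde{w}_{\delta}$: it is the only point where \eqref{pq} is invoked, and it enters at its sharp threshold, in agreement with the necessity discussion in Remark~\ref{rem:density}.
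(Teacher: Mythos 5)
The interior mollification is right and captures the real content of the lemma: the split of $a(x)\bigl(\rho_{\varepsilon}*\snr{D\tilde{w}_{\delta}}\bigr)^{q}$ into a convolution of $a\snr{D\tilde{w}_{\delta}}^{q}$ plus a remainder of size $[a]_{0,\alpha}\varepsilon^{\alpha}$, combined with the $L^{p}\to L^{\infty}$ mollifier bound, yields $\varepsilon^{\alpha-n(q-p)/p}$, which is bounded precisely when $q/p\le 1+\alpha/n$. This is indeed the only place where \eqref{pq} enters, and it enters at the sharp threshold, exactly as in the cited reference. (The paper itself defers to \cite{eslemi} and offers no proof, so there is no in-text argument to compare against.)

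The gap is in the reduction to compact support. You write $\tilde{w}_{\delta}=\eta_{\delta}w$ and claim that the remaining boundary term $\int_{\Omega}a(x)\snr{w}^{q}\snr{D\eta_{\delta}}^{q}\dx$ (and likewise $\int_{\Omega}\snr{w}^{p}\snr{D\eta_{\delta}}^{p}\dx$) ``vanishes since $w$ has zero trace on $\partial\Omega$''. That is not a one-liner: with $\snr{D\eta_{\delta}}\sim\delta^{-1}$ the integrand is $a(x)\snr{w}^{q}/\delta^{q}$ over the collar $\{\dist(\cdot,\partial\Omega)\le 2\delta\}$, and sending $\delta\to 0$ requires a Hardy-type inequality $\int_{\Omega}a(x)\snr{w}^{q}\dist(x,\partial\Omega)^{-q}\dx<\infty$. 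This is not implied by $w\in W^{1,H(\cdot)}(\Omega)\cap W^{1,1}_{0}(\Omega)$ alone; even the $p$-part, i.e.\ the identity $W^{1,p}(\Omega)\cap W^{1,1}_{0}(\Omega)=W^{1,p}_{0}(\Omega)$, is a statement that needs regularity of $\partial\Omega$ and a proof. The standard route in \cite{eslemi} and related work is different: reduce to a star-shaped (or Lipschitz) geometry via a partition of unity, and use inward scaling $w_{t}(x):=w(x/t)$ (or local translations) to obtain compactly supported approximants before mollifying — the change of variables then controls the $a$-weighted term by continuity of $a$, with no Hardy inequality needed. You should replace the cutoff with such a scaling/translation step; as written, the cutoff argument does not close.
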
 
We recall the intrinsic Sobolev-Poincar\'e inequalities.
\begin{lemma}\label{sopo} Suppose $\Omega\subset \mathbb{R}^{n}$ is a bounded, open set, $B_\rr\Subset\Omega,$ $\rr\le 1$,  and  \eqref{pq} is in force. Then there exist $d_{1}=d_{1}(n,p,q)\in(0,1)$, $d_{2}=d_{2}(n,p,q)>1$ and a positive constant $c=c(n,p,q,[a]_{0,\alpha},\alpha,\|Dw\|_{L^p(\Omega)}),$ such that for every  $w \in W^{1,H(\cdot)}(\Omega)$ the following inequalities hold
\begin{flalign}\label{sopo1}
\mint_{B_\rr} H\left(x,\frac{w-(w)_\rr}{\rr}\right)\ \dx\le c 
\left(\mint_{B_\rr} H(x,Dw )^{d_{1}}\ \dx\right)^\frac{1}{d_{1}}
\end{flalign}
and 
\begin{flalign}\label{sopo2}
\left(\mint_{B_{\rr}}H\left(x,\frac{w-(w)_{\rr}}{\rr}\right)^{d_{2}} \ \dx\right)^{\frac{1}{d_{2}}}\le c\mint_{B_{\rr}}H(x,Dw) \ \dx.
\end{flalign}
The analogous of estimates \eqref{sopo1} and \eqref{sopo2} holds if instead of $w-(w)_{\rr}$ we consider any $w\in W^{1,H(\cdot)}(\Omega)$ with $\left.w\right |_{\partial B_{\rr}}=0$. Furthermore, if $w\in W^{1,H(\cdot)}_{0}(\Omega)$, \begin{flalign}\label{sopo3}
\int_{\Omega}H(x,w) \ \dx\le c\int_{\Omega}H(x,Dw) \ \dx,
\end{flalign}
for $c=c(n,p,q,[a]_{0,\alpha},\alpha,\nr{Dw}_{L^{p}(\Omega)},\diam(\Omega))$.
\end{lemma}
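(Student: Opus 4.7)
The plan is to reduce \eqref{sopo1}--\eqref{sopo3} to classical scalar Sobolev--Poincar\'e inequalities via the dichotomy \eqref{adeg}--\eqref{andeg}, with the zero-trace case handled by approximation through Lemma~\ref{l1}. I would fix $d_{1}:=n/(n+p)\in(0,1)$, so that $pd_{1}$ is the dual Sobolev exponent of $p$ from below, and observe that $qd_{1}>qn/(n+q)$ for free because $p<q$. I would pick $d_{2}>1$ small enough that $pd_{2},qd_{2}\le p^{*}$; this is allowed since $q\le p^{*}$ follows from $\eqref{pq}_{1}$ combined with $p<n$. Given $B_{\rr}\Subset\Omega$ with $\rr\le1$, I would then distinguish \eqref{adeg} from \eqref{andeg}.

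For \eqref{sopo1} in the non-degenerate regime \eqref{andeg}, the inequality $a_{s}(B_{\rr})\le 2a_{i}(B_{\rr})\le 2a(x)$ on $B_{\rr}$ lets me identify $H(x,\cdot)$ with the constant-coefficient Young function $|z|^{p}+a_{i}(B_{\rr})|z|^{q}$ up to uniform constants. Setting $u:=(w-(w)_{\rr})/\rr$, I apply the classical Sobolev--Poincar\'e inequality for each of the powers $p$ and $q$ to get $\mint_{B_{\rr}}|u|^{s}\,\dx\le c\,(\mint_{B_{\rr}}|Dw|^{sd_{1}}\,\dx)^{1/d_{1}}$ for $s\in\{p,q\}$ (using Jensen to raise the $q$-case from exponent $qn/(n+q)$ up to $qd_{1}$), multiply the $q$-estimate by $a_{s}(B_{\rr})$, use the pointwise bound $a_{s}(B_{\rr})^{d_{1}}\le c\,a(x)^{d_{1}}$ a.e.\ on $B_{\rr}$, and sum to reach \eqref{sopo1}. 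The very same argument, with $d_{2}>1$ in place of $d_{1}$ and the Sobolev--Poincar\'e invoked in the higher-integrability direction $W^{1,s}\hookrightarrow L^{s^{*}}$, produces \eqref{sopo2}.

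For \eqref{sopo1} in the degenerate regime \eqref{adeg} I again handle the $|u|^{p}$-part via classical Sobolev--Poincar\'e. For the $a_{s}(B_{\rr})\mint|u|^{q}$-part I combine $a_{s}(B_{\rr})\le 6[a]_{0,\alpha}\rr^{\alpha}$ with the embedding $W^{1,p}(B_{\rr})\hookrightarrow L^{p^{*}}(B_{\rr})$ (legitimate since $q\le p^{*}$) to produce
\[
a_{s}(B_{\rr})\mint_{B_{\rr}}|u|^{q}\,\dx\le c\,\rr^{\alpha}\Bigl(\mint_{B_{\rr}}|Dw|^{p}\,\dx\Bigr)^{q/p}.
\]
I then split $(\mint|Dw|^{p})^{q/p}=(\mint|Dw|^{p})^{(q-p)/p}\mint|Dw|^{p}$ and bound $(\mint_{B_{\rr}}|Dw|^{p})^{(q-p)/p}\le c|B_{\rr}|^{-(q-p)/p}\|Dw\|_{L^{p}(\Omega)}^{q-p}$, so the algebraic factor $\rr^{\alpha}|B_{\rr}|^{-(q-p)/p}=c\,\rr^{\alpha-n(q-p)/p}$ is bounded by $1$ precisely thanks to $\eqref{pq}_{1}$ and $\rr\le 1$; this is the exact step in which the constant $c$ inherits its dependence on $\|Dw\|_{L^{p}(\Omega)}$. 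Comparing $\mint|Dw|^{p}$ with $\mint H(x,Dw)$ and invoking the pointwise bound $H(x,Dw)^{d_{1}}\ge|Dw|^{pd_{1}}$ would then close the inequality.

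For \eqref{sopo3}, I would use Lemma~\ref{l1} to approximate $w\in W^{1,H(\cdot)}_{0}(\Omega)$ by a sequence $(w_{j})\subset C^{\infty}_{c}(\Omega)$ converging to $w$ modularly, extend each $w_{j}$ by zero to a ball $B\supset\Omega$ of radius $\diam(\Omega)$ (so that the mean $(w_{j})_{B}$ is controlled by $|\Omega|/|B|$), apply \eqref{sopo1} to the extensions, and pass to the limit by using the doubling nature of $H$.

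The main obstacle is the degenerate regime of \eqref{sopo1}, where transferring $q$-scale information on $u$ into $p$-scale information on $Dw$ requires a delicate fine-tuning of the gain $\rr^{\alpha}$ against the loss $|B_{\rr}|^{-(q-p)/p}$; the balance condition $\eqref{pq}_{1}$ is exactly what guarantees this cancellation uniformly in $\rr$, and it forces the explicit $\|Dw\|_{L^{p}(\Omega)}$-dependence of the constant.
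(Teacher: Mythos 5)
Your approach to \eqref{sopo2} and \eqref{sopo3} is essentially the paper's: the dichotomy \eqref{adeg}--\eqref{andeg}, the bound $a_{s}(B_{\rr})\le c\rr^{\alpha}$ in the degenerate case, the balance condition $\eqref{pq}_{1}$ against $\rr^{-n(q-p)/p}$, and extension by zero for \eqref{sopo3}. (The paper extends directly without invoking Lemma~\ref{l1} and uses the zero-boundary-value version of \eqref{sopo1} rather than subtracting the mean, but those are cosmetic.)

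However, your attempted proof of \eqref{sopo1} has a genuine gap in the degenerate regime, and this is precisely why the paper does not prove \eqref{sopo1} at all but cites \cite[Theorem~2.13]{Ok} for it. The final step in your degenerate-case argument lands on
\[
a_{s}(B_{\rr})\mint_{B_{\rr}}\snr{u}^{q}\,\dx \le c\,\mint_{B_{\rr}}\snr{Dw}^{p}\,\dx,
\]
and you propose to close via the pointwise bound $H(x,Dw)^{d_{1}}\ge\snr{Dw}^{pd_{1}}$. But since $d_{1}<1$, Jensen's inequality gives
\[
\left(\mint_{B_{\rr}}H(x,Dw)^{d_{1}}\,\dx\right)^{1/d_{1}}\le \mint_{B_{\rr}}H(x,Dw)\,\dx,
\]
so $\mint\snr{Dw}^{p}\,\dx$ is not, in general, controlled by $\bigl(\mint\snr{Dw}^{pd_{1}}\,\dx\bigr)^{1/d_{1}}$ — the inequality goes the wrong way, and the ratio can be arbitrarily large for concentrated gradients. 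This is exactly what separates \eqref{sopo1} from \eqref{sopo2}: with $d_{2}>1$ you only need to beat $\mint H(x,Dw)$, which is a \emph{weaker} target, so the same manipulation closes the argument; with $d_{1}<1$ the target $\bigl(\mint H^{d_{1}}\bigr)^{1/d_{1}}$ is \emph{stronger}, and a direct reduction to the classical Sobolev--Poincar\'e inequality of power type does not suffice. Replacing your intermediate step by the ``gain'' Sobolev--Poincar\'e $\mint\snr{u}^{q}\le c\bigl(\mint\snr{Dw}^{qd_{1}}\bigr)^{1/d_{1}}$ does not rescue it either: one is still left needing $\rr^{\alpha d_{1}}\mint\snr{Dw}^{qd_{1}}\le c\,\mint\snr{Dw}^{pd_{1}}$, and every Hölder split of the left side again produces a $\mint\snr{Dw}^{p}$ factor facing the same reverse-Jensen obstruction. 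Your non-degenerate argument for \eqref{sopo1} is sound (there $a_{s}^{d_{1}}\le c\,a(x)^{d_{1}}$ pointwise does carry the $q$-term into $\mint a^{d_{1}}\snr{Dw}^{qd_{1}}$), but the degenerate half needs the genuinely intrinsic argument from \cite{Ok} rather than a reduction to the constant-exponent case.
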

\begin{proof}
Inequality \eqref{sopo1} can be found in \cite[Theorem 2.13]{Ok}. We shall concentrate now on~\eqref{sopo2}. Fix any $d_{2}\in \left(1,\frac{np}{q(n-p)}\right]$ and notice that, by \eqref{pq}, this position makes sense and $qd_{2}\le p^{*}<q^{*}$. Let $B_{\rr}\Subset \Omega$ be any ball as in the assumptions and consider the degenerate scenario \eqref{adeg}. Then, from the standard Sobolev-Poincar\'e inequality and \eqref{pq} we have
\begin{flalign*}
\mint_{B_{\rr}}H\left(x,\frac{w-(w)_{\rr}}{\rr}\right)^{d_{2}} \ \dx\le &c\mint_{B_{\rr}}\left |\frac{w-(w)_{\rr}}{\rr} \right |^{pd_{2}}+a(x)^{d_{2}}\left |\frac{w-(w)_{\rr}}{\rr} \right |^{qd_{2}} \ \dx\nonumber\\
\le &c\left(\mint_{B_{\rr}}\left |\frac{w-(w)_{\rr}}{\rr} \right |^{p^{*}} \ \dx \right)^{\frac{pd_{2}}{p^{*}}}+c\rr^{\alpha d_{2}}\left(\mint_{B_{\rr}}\left|\frac{w-(w)_{\rr}}{\rr}\right |^{p^{*}} \ \dx\right)^{\frac{qd_{2}}{p^{*}}}\nonumber \\
\le&c\rr^{\alpha d_{2}}\left(\mint_{B_{\rr}}\snr{Dw}^{p} \ \dx\right)^{\frac{d_{2}(q-p)}{p}}\left(\mint_{B_{\rr}}\snr{Dw}^{p} \ \dx\right)^{d_{2}}\nonumber\\
&+c\left(\mint_{B_{\rr}}\snr{Dw}^{p} \ \dx\right)^{d_{2}}\le c\left(\mint_{B_{\rr}}\snr{Dw}^{p}\ \dx\right)^{d_{2}},
\end{flalign*}
with $c=c(n,p,q,[a]_{0,\alpha},\alpha,\nr{Dw}_{L^{p}(\Omega)})$. Now consider the opposite condition, i.e., \eqref{andeg}. In this case, it is easy to see that $a_{s}(B_{\rr})\le \tfrac{3}{2}a_{i}(B_{\rr})$, therefore
\begin{flalign*}
\mint_{B_{\rr}}H\left(x,\frac{w-(w)_{\rr}}{\rr}\right)^{d_{2}} \ \dx\le& c\mint_{B_{\rr}}\left |\frac{w-(w)_{\rr}}{\rr} \right |^{pd_{2}}+\left |\frac{a_{i}(B_{\rr})^{\frac{1}{q}}(w-(w)_{\rr})}{\rr} \right |^{qd_{2}} \ \dx\nonumber\\
\le &c\left(\mint_{B_{\rr}}\snr{Dw}^{p} \ \dx\right)^{d_{2}}+\left(\mint_{B_{\rr}}\left |\frac{a_{i}(B_{\rr})^{\frac{1}{q}}(w-(w)_{\rr})}{\rr} \right |^{q^{*}} \ \dx\right)^{\frac{qd_{2}}{q^{*}}}\nonumber \\
\le &c\left(\mint_{B_{\rr}}\snr{Dw}^{p} \ \dx\right)^{d_{2}}+c\left(\mint_{B_{\rr}}a_{i}(B_{\rr})\snr{Dw}^{q} \ \dx\right)^{d_{2}}\nonumber \\
\le &c\left(\mint_{B_{\rr}}H(x,Dw) \ \dx\right)^{d_{2}},
\end{flalign*}
for $c=c(n,p,q)$. In both cases we obtain \eqref{sopo2}. Finally, for \eqref{sopo3}, we recall that $\Omega$ is bounded, therefore we can find a ball $B_{R}\subset \mathbb{R}^{n}$ with $R:=10\diam(\Omega)$ such that $\Omega \Subset B_{R}$. Now, if $w\in W^{1,H(\cdot)}_{0}(\Omega)$ we can define $\tilde{w}$ as an extension of $w$ by zero outside $\Omega$. 
Applying 
\eqref{sopo1} on $B_{R}$, (with $w-(w)_{R}$ replaced by $\tilde{w}$, of course), and H\"older's inequality we have
\begin{flalign*}
\int_{\Omega}H(x,w) \ \dx \le &\max\{1,10\diam(\Omega)\}^{q}\snr{B_{R}}\mint_{B_{R}}H(x,\tilde{w}/R) \ \dx\nonumber 
\le c\snr{B_{R}} \mint_{B_{R}}H(x,\tilde{w}/R) \ \dx \nonumber \\
\le& c\snr{B_{R}}\left(\mint_{B_{R}}H(x,D\tilde{w})^{d_{1}} \ \dx\right)^{\frac{1}{d_{1}}}
\le c\int_{B_{R}}H(x,D\tilde{w}) \ \dx =c\int_{\Omega}H(x,Dw) \ \dx,
\end{flalign*}
with {\it $c=c(n,p,q,[a]_{0,\alpha},\alpha,\nr{Dw}_{L^{p}(\Omega)},\diam(\Omega))$.}
\end{proof}

\section{Intrinsic capacities and intrinsic Hausdorff measures}\label{haus} We define the \emph{intrinsic $H(\cdot)$-capacity} and recall its main features exactly in the form we need. For more details and generalizations, we refer the reader to \cite{bahaha}. Given a~compact set $K\subset \Omega$, we denote its relative $H(\cdot)$-capacity as
\begin{flalign*}
cap_{H(\cdot)}(K,\Omega):=\inf_{f\in \mathcal{R}(K)}\int_{\Omega}H(x,Df) \ \dx,
\end{flalign*}
where the set of test functions is
\begin{flalign*}
\mathcal{R}_{H(\cdot)}(K):=\left\{ f\in W^{1,H(\cdot)}(\Omega)\cap C_{0}(\Omega)\colon f\ge 1 \ \ \mbox{in} \ \ K \right\}.
\end{flalign*}
As usual, for open subsets $U\subset \Omega$ and general $E\subset \Omega$ we have
\begin{flalign*}
cap_{H(\cdot)}(U,\Omega):=\sup_{{\substack{K\subset U,\\ K \ \mbox{compact}}}}cap_{H(\cdot)}(K,\Omega)
\end{flalign*}
and then
\begin{flalign*}
cap_{H(\cdot)}(E,\Omega):=\inf_{{\substack{E\subset U\subset \Omega, \\ U \ \mbox{open}}}}cap_{H(\cdot)}(U,\Omega).
\end{flalign*}
The structure of $H(\cdot)$ and \eqref{pq} guarantee that $cap_{H(\cdot)}$ enjoys the standard properties of~Sobolev capacities. In particular, $cap_{H(\cdot)}$ is Choquet, which means that
\begin{flalign}\label{cho}
cap_{H(\cdot)}(E)=\sup\left\{cap_{H(\cdot)}(K)\colon K\subset E \ \mbox{is a compact set}\right\}.
\end{flalign}
Moreover, again from \eqref{pq} and the convexity of $z\mapsto H(\cdot, z)$, we see that the relative capacity $cap_{H(\cdot)}$ is equivalent to the capacity $C_{H(\cdot)}$ defined in \cite[Section~3]{bahaha}, see \cite[Theorem~7.3 and Proposition~7.5]{bahaha}.
\begin{remark}\label{r3}
\emph{Given a compact $K\Subset \Omega$, when working with any function $f\in \mathcal{R}_{H(\cdot)}(K)$, there is no loss of generality in assuming $0\le f\le 1$ on $\Omega$. {Since $f\in C_{0}(\Omega)$, $f\ge 1$ on $K$ and the map $t\mapsto \min\{t,1\}$ is Lipschitz,} it follows that $\tilde{f}:=\min\{f,1\} \in \mathcal{R}_{H(\cdot)}(K)$. Moreover,
\begin{flalign*}
\int_{\Omega}H(x,D\tilde{f}) \ \dx =&\int_{\{x\in \Omega\colon f(x)< 1\}}H(x,Df) \ \dx \le \int_{\Omega}H(x,Df) \ \dx,
\end{flalign*}
so our claim follows from the definition of $cap_{H(\cdot)}(K)$. Moreover, Lemma \ref{l1} guarantees the density of smooth and compactly supported functions in $W^{1,H(\cdot)}_{0}(\Omega)$, thus, when dealing with functions $f\in\mathcal{R}_{H(\cdot)}(K)$ there is no loss of generality in assuming $f\in C^{\infty}_{c}(\Omega)$.}
\end{remark}

Naturally associated to those capacities is the concept of \emph{intristic Hausdorff measures}, introduced in \cite{demi}, see also \cite{ni,tu}. For any $n$-dimensional open ball $B\subset \Omega$  of radius $\rr(B)\in (0,\infty)$, we define
\begin{flalign*}
h_{H(\cdot)}(B):=\int_{B}H\left(x,\tfrac{1}{\rr(B)}\right) \ \dx.
\end{flalign*}Note that there is no difference in the following in taking closed balls in this respect. Applying the standard Carath\'eodory's construction we obtain an outer measure, in fact, let $E\subset \Omega$ be any subset. We define the $\delta$-approximating Hausdorff measure of $E$, $\mathcal{H}_{H(\cdot),\delta}(E)$ with $\delta \leq 1$, by
\begin{flalign*}
\mathcal{H}_{H(\cdot),\delta}(E)=\inf_{\mathcal{C}_{E}^{\delta}}\sum_{j}h_{H(\cdot)}(B_{j}),
\end{flalign*}
where
\begin{flalign*}
\mathcal{C}_{E}^{\delta}=\left \{\ \{B_{j}\}_{j\in \N}\ \mathrm{is \ a \ countable \ collection \ of \ balls \ B_j \subset \Omega \ covering \ }E,\, \ \rr(B_{j})\le \delta \  \right\}.
\end{flalign*}
As $0<\delta_{1}<\delta_{2}<\infty$ implies $\mathcal{C}_{E}^{\delta_{1}}\subset \mathcal{C}_{E}^{\delta_{2}}$, we have that $\mathcal{H}_{H(\cdot), \delta_1}(E)\ge \mathcal{H}_{H(\cdot),\delta_{2}}(E)$ and there exists the limit
\begin{flalign*}
\mathcal{H}_{H(\cdot)}(E):=\lim_{\delta \to 0}\mathcal{H}_{H(\cdot),\delta}(E)=\sup_{\delta>0}\mathcal{H}_{H(\cdot),\delta}(E)\;.
\end{flalign*}
By standard arguments, $\mathcal{H}_{H(\cdot)}$ is a Borel regular measure. The intrinsic Hausdorff measure related to $ {H}_{\sigma}(\cdot)$ given by~\eqref{Hgamma} is denoted of course as $\mathcal{H}_{H_{\sigma}(\cdot)}$.  Notice that, for $\sigma \in (1/p,1]$, function ${H}_\sigma(\cdot)$ is a generalized Young function with the same structure of $H(\cdot)$. Therefore, $\mathcal{H}_{H_{\sigma}(\cdot)}$  has the same properties as $\mathcal{H}_{H(\cdot)}$. Moreover, due to~\eqref{Hgamma-comp} $\mathcal{H}_{H_{\sigma}(\cdot)}\sim  \mathcal{H}_{H^{\sigma}(\cdot)}$. Let us recall a result which relates intrinsic Hausdorff measures with the corresponding intrinsic capacity.
\begin{proposition}\cite{demi}\label{eq}
For $H(\cdot)$ given by \eqref{H} under assumption \eqref{pq}, if $E\subset \mathbb{R}^{n}$ is such that $\mathcal{H}_{H(\cdot)}(E)<\infty$, then $cap_{H(\cdot)}(E)=0$.
\end{proposition}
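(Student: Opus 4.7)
The plan is to reduce to compact subsets via Choquet's property \eqref{cho}, to obtain a linear bound $cap_{H(\cdot)}\le c\,\mathcal{H}_{H(\cdot)}$ by building an admissible test function from an efficient ball covering, and finally to upgrade this bound to vanishing through a Frostman-type duality argument adapted to the intrinsic Musielak-Orlicz setting.

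By \eqref{cho}, it suffices to prove $cap_{H(\cdot)}(K,\Omega)=0$ for each compact $K\subset E$, since then $\mathcal{H}_{H(\cdot)}(K)\le\mathcal{H}_{H(\cdot)}(E)<\infty$. Given $\varepsilon,\delta>0$, by the very definition of $\mathcal{H}_{H(\cdot),\delta}$ I pick a countable open covering $\{B_{\rho_j}(x_j)\}_j$ of $K$ with $\rho_j\le\delta$ and $\sum_j h_{H(\cdot)}(B_{\rho_j}(x_j))\le\mathcal{H}_{H(\cdot),\delta}(K)+\varepsilon$, and reduce to a finite subcover by compactness. For each $B_j$ I take a Lipschitz cutoff $\eta_j\in C^{\infty}_{c}(\Omega)$ with $\eta_j\equiv 1$ on $B_j$, $\supp\eta_j\subset 2B_j\Subset\Omega$, $0\le\eta_j\le 1$, and $|D\eta_j|\le c/\rho_j$. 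Then $f:=\max_j\eta_j$ is admissible in $\mathcal{R}_{H(\cdot)}(K)$: by Lemma~\ref{l1} and Remark~\ref{r3} it lies in $W^{1,H(\cdot)}(\Omega)\cap C_0(\Omega)$, and $f\equiv 1$ on $K$.

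Since $H(x,\cdot)$ is nondecreasing, $H(x,|Df|)\le H(x,\max_j|D\eta_j|)=\max_j H(x,|D\eta_j|)\le\sum_j H(x,|D\eta_j|)$ pointwise a.e. Combining this with the doubling estimate $h_{H(\cdot)}(2B)\le C\,h_{H(\cdot)}(B)$ for small balls, which follows from the $\alpha$-Hölder continuity of $a$ together with $q\le p+\alpha$ (a consequence of \eqref{pq}) by splitting into the two phases \eqref{adeg} and \eqref{andeg}, I obtain
\[
cap_{H(\cdot)}(K,\Omega)\le\int_\Omega H(x,Df)\dx\le c\sum_j h_{H(\cdot)}(2B_j)\le c\bigl(\mathcal{H}_{H(\cdot)}(K)+\varepsilon\bigr).
\]

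The main obstacle is promoting this linear bound into the actual vanishing of $cap_{H(\cdot)}(K,\Omega)$: this is the nonlinear analogue of the classical Adams--Hedberg theorem. Arguing by contradiction, suppose $cap_{H(\cdot)}(K,\Omega)>0$. Then the intrinsic nonlinear potential theory of \cite{bahaha} produces a nontrivial Radon measure $\mu$ supported in $K$ with finite intrinsic $H(\cdot)$-Wolff energy. A density-type result yields $\lim_{\rho\to 0}\mu(B_\rho(x))/h_{H(\cdot)}(B_\rho(x))=0$ for $\mu$-a.e.~$x\in K$, and by Egoroff one extracts a compact $K'\subset K$ with $\mu(K')>0$ on which this convergence is uniform. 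Covering $K'$ by a near-optimal $\mathcal{H}_{H(\cdot)}$-family and summing,
\[
\mu(K')\le\sum_j\mu(B_j)\le\eta\sum_j h_{H(\cdot)}(B_j)\le\eta\bigl(\mathcal{H}_{H(\cdot)}(K)+\varepsilon\bigr),
\]
with $\eta$ arbitrarily small. It is precisely the finiteness of $\mathcal{H}_{H(\cdot)}(K)$ that converts the pointwise infinitesimal density into a true vanishing, forcing $\mu(K')=0$ and thus the desired contradiction. The careful realization of this Frostman-type density step in the Musielak-Orlicz double phase framework is the technical content deferred to \cite{demi}.
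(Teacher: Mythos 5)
The paper offers no proof of Proposition~\ref{eq}; it is imported verbatim from \cite{demi}, so there is no in-text argument to compare against. What you propose is a plausible sketch in the Adams--Hedberg tradition, but it does not close the gap that the citation is meant to cover. Two concrete issues.

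First, the ``linear bound'' you establish, $cap_{H(\cdot)}(K,\Omega)\le c\,\mathcal{H}_{H(\cdot)}(K)$, is a genuine estimate and the covering construction behind it (take $f=\max_j\eta_j$, use $H(x,|Df|)\le\sum_j H(x,|D\eta_j|)$ and the doubling of $h_{H(\cdot)}$ coming from $q\le p+\alpha$ and the two regimes \eqref{adeg}--\eqref{andeg}) is essentially correct, modulo keeping the dilated balls inside $\Omega$. But you then correctly observe that this bound never yields vanishing, and indeed it plays no role in the subsequent contradiction argument: the Frostman step relies only on the finite Hausdorff cover and the assumed density decay, not on the inequality $cap\le c\,\mathcal{H}$. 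So the first half of the proposal is a detour, not a reduction.

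Second, and more seriously, the Frostman step is where the entire content of the proposition lives, and it is exactly the part you leave unproved. You invoke ``the intrinsic nonlinear potential theory of \cite{bahaha}'' to produce a nonzero measure on $K$ with finite intrinsic Wolff energy, but \cite{bahaha} develops the basic Sobolev-capacity apparatus (Choquet property, comparison of variational and relative capacities, outer regularity), not an intrinsic Wolff potential or an equilibrium/capacitary measure duality for $cap_{H(\cdot)}$. There is no result there producing the measure you need, nor the density statement $\mu(B_\rho(x))/h_{H(\cdot)}(B_\rho(x))\to 0$ on $\supp\mu$ (which, to your credit, does follow from finite Wolff energy by the usual monotonicity-of-$\rho\mapsto\mu(B_\rho)$ argument once the Wolff potential is set up). Deferring ``the careful realization of this Frostman-type density step'' to \cite{demi} means the sketch does not stand alone: it restates the result it is meant to prove. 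A route that stays closer to the tools already available in the paper would be to produce, from $cap_{H(\cdot)}(K)>0$, the capacitary measure $\mu_K=-\diver A_H(\cdot,Du_K)\in\bigl(W^{1,H(\cdot)}_0(\Omega)\bigr)^*$ via the obstacle-problem machinery of Section~\ref{o}, note that $\mu_K(K)=cap_{H(\cdot)}(K)>0$, and then use the $\delta$-cover functions $u_\delta$ you already built: they are bounded in $W^{1,H(\cdot)}_0(\Omega)$ with $|\supp u_\delta|\lesssim\delta^p\,\mathcal{H}_{H(\cdot)}(K)\to 0$, hence $u_\delta\rightharpoonup 0$, forcing $\mu_K(K)\le\langle\mu_K,u_\delta\rangle\to 0$, a contradiction. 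This avoids any Wolff-potential density lemma and uses only capacitary-measure duality plus weak compactness, which is likely closer to what \cite{demi} actually does.
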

We conclude this section with the proof of a first result on removable sets for solutions of \eqref{A0}. Following \cite[Chapter 2]{hekima} and taking into account  Remark~\ref{rem:density}, for $E\subset \Omega$ relatively closed, we say that
\begin{flalign*}
W^{1,H(\cdot)}_{0}(\Omega)=W^{1,H(\cdot)}_{0}(\Omega\setminus E)
\end{flalign*}
if for any given $w\in W^{1,H(\cdot)}_{0}(\Omega)$ there exists a sequence $(w_{j})_{j\in \N}\subset C^{\infty}_{c}(\Omega\setminus E)$ such that  $w_j\to w$ modularly in $W^{1,H(\cdot)}_{0}(\Omega)$. Now we are ready to state our first two results, which clarify when a set is negligible in $W^{1,H(\cdot)}(\Omega)$. The already classical version of this fact stated in the Sobolev space $W^{1,p}$ can be found in \cite[Section 2.42]{hekima}. 
\begin{lemma}\label{l2}
Suppose that $E$ is a relatively closed subset of $\Omega$. Then 
\begin{flalign*}
W^{1,H(\cdot)}_{0}(\Omega)=W^{1,H(\cdot)}_0(\Omega\setminus E)\ \ \mbox{if and only if} \ \ cap_{H(\cdot)}(E)=0.
\end{flalign*}
\end{lemma}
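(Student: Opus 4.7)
I would split the equivalence and treat each implication independently.

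\emph{Necessity $(\Rightarrow)$.} Assume $W^{1,H(\cdot)}_0(\Omega)=W^{1,H(\cdot)}_0(\Omega\setminus E)$ and fix an arbitrary compact set $K\subset E$. Pick a cutoff $\eta\in C^\infty_c(\Omega)$ with $\eta\equiv 1$ on $K$ and $0\le\eta\le 1$. By the hypothesis there exists $(\eta_j)_{j\in\N}\subset C^\infty_c(\Omega\setminus E)$ converging to $\eta$ modularly in $W^{1,H(\cdot)}(\Omega)$. Since $\eta_j\equiv 0$ on $E\supset K$ and $\eta\equiv 1$ there, the difference $\eta-\eta_j$ equals $1$ on $K$, belongs to $W^{1,H(\cdot)}(\Omega)\cap C_0(\Omega)$, and is therefore admissible: $\eta-\eta_j\in\mathcal{R}_{H(\cdot)}(K)$. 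Consequently
\begin{flalign*}
cap_{H(\cdot)}(K)\le \int_{\Omega}H(x,D\eta_j-D\eta)\,\dx\longrightarrow 0\quad \mbox{as}\ j\to\infty,
\end{flalign*}
so $cap_{H(\cdot)}(K)=0$ for every compact $K\subset E$, and the Choquet property \eqref{cho} upgrades this to $cap_{H(\cdot)}(E)=0$.

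\emph{Sufficiency $(\Leftarrow)$.} Assume $cap_{H(\cdot)}(E)=0$, fix $w\in W^{1,H(\cdot)}_0(\Omega)$, and choose $(w_j)\subset C^\infty_c(\Omega)$ with $w_j\to w$ modularly in $W^{1,H(\cdot)}(\Omega)$ via Lemma \ref{l1}. For each $j$, the set $K_j:=E\cap\supp w_j$ is a compact subset of $\Omega$ with $cap_{H(\cdot)}(K_j)=0$. I would then produce cutoffs $\chi_{j,k}\in C^\infty_c(\Omega)$ satisfying $0\le\chi_{j,k}\le 1$, $\chi_{j,k}\equiv 1$ on an open neighborhood $G_{j,k}\supset K_j$, and $\int_\Omega H(x,D\chi_{j,k})\,\dx<1/k$; these are obtained by combining Remark~\ref{r3} with a smooth post-composition $\psi(2f_k)$, where $f_k\in\mathcal{R}_{H(\cdot)}(K_j)$ is nearly optimal and $\psi\in C^\infty(\mathbb{R};[0,1])$ equals $0$ on $(-\infty,1/2]$ and $1$ on $[3/2,\infty)$, which promotes the constraint ``$f_k\equiv 1$ on $K_j$'' to ``$\chi_{j,k}\equiv 1$ on an open neighborhood'' at the cost of a harmless universal factor in the modular. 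Then $(1-\chi_{j,k})w_j$ has compact support inside $\supp w_j\setminus G_{j,k}\subset\Omega\setminus E$, and standard mollification produces $\tilde w_{j,k,\ell}\in C^\infty_c(\Omega\setminus E)$ converging to $(1-\chi_{j,k})w_j$ in $W^{1,H(\cdot)}(\Omega)$ as $\ell\to\infty$. A diagonal extraction over $j,k,\ell$ then yields the desired approximating sequence in $C^\infty_c(\Omega\setminus E)$.

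The main technical obstacle is the modular smallness of the correction, i.e., showing that $\chi_{j,k}w_j\to 0$ modularly in $W^{1,H(\cdot)}(\Omega)$ as $k\to\infty$ for each fixed $j$. Using the pointwise bound $|D(\chi_{j,k}w_j)|\le \nr{w_j}_{L^\infty}|D\chi_{j,k}|+\nr{Dw_j}_{L^\infty}\chi_{j,k}$, the convexity of $H(x,\cdot)$ and its $(p,q)$-growth reduce the task to showing $\int_\Omega H(x,D\chi_{j,k})\,\dx\to 0$, which is true by construction, together with $\int_\Omega H(x,\chi_{j,k})\,\dx\to 0$, which follows from the Sobolev-Poincar\'e inequality \eqref{sopo3} applied to $\chi_{j,k}\in W^{1,H(\cdot)}_0(\Omega)$; the relevant constant stays uniform in $k$ since $\nr{D\chi_{j,k}}_{L^p(\Omega)}\to 0$. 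This is precisely where the balance condition \eqref{pq} enters, in the same spirit as it does in Lemma \ref{l1} and Remark \ref{rem:density}.
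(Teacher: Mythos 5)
Your proof is correct and follows essentially the same conceptual route as the paper: for necessity, subtract the approximants coming from $W^{1,H(\cdot)}_0(\Omega\setminus E)$ to manufacture admissible competitors in $\mathcal{R}_{H(\cdot)}(K)$ with vanishing modular; for sufficiency, multiply a smooth approximant of $w$ by $1-\chi$, where $\chi$ is a small-modular cutoff that kills the bad set, and then take a diagonal sequence.

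There is one point where you are actually \emph{more careful} than the paper, and it is worth noting. The paper picks $f_j\in\mathcal{R}_{H(\cdot)}(E)\cap C^\infty_c(\Omega)$ with $0\le f_j\le1$ and asserts that $\psi_j:=(1-f_j)\varphi$ has support in $\Omega\setminus E$. Since $f_j$ need only equal $1$ \emph{on} $E$ and not on an open neighbourhood of it, the closure of $\{f_j<1\}$ can touch $E$, so this support inclusion is not automatic. You resolve this precisely by post-composing an almost-optimal $f_k$ with a smooth cutoff $\psi$ that is $1$ on $[3/2,\infty)$ and $0$ on $(-\infty,1/2]$: this promotes ``$f_k\ge1$ on $K_j$'' to ``$\chi_{j,k}\equiv1$ on an open neighbourhood of $K_j$'' at a harmless multiplicative cost in the modular, and guarantees that $(1-\chi_{j,k})w_j$ has compact support in $\Omega\setminus E$. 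Likewise, restricting attention to the compact set $K_j:=E\cap\operatorname{supp}w_j$ is cleaner than working with $\mathcal{R}_{H(\cdot)}(E)$ directly, since the admissible-function class in the paper is only defined for compacta.

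Two minor remarks. First, $(1-\chi_{j,k})w_j$ is already in $C^\infty_c(\Omega\setminus E)$ (a product of smooth compactly supported functions), so the mollification step and the third diagonal index $\ell$ are superfluous. Second, your invocation of \eqref{sopo3} to control $\int_\Omega H(x,\chi_{j,k})\,\dx$ is fine since $\nr{D\chi_{j,k}}_{L^p(\Omega)}\to0$ keeps the constant uniformly bounded, but one could equally obtain $\chi_{j,k}\to 0$ a.e.\ (along a subsequence) from the vanishing modular of the gradient and then appeal to dominated convergence on $\operatorname{supp}w_j$, which is closer to what the paper does. Both routes are valid.
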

\begin{proof}{ According to Remark~\ref{rem:density}}, we can work with the modular convergence. Assume first that $cap_{H(\cdot)}(E)=0$. The inclusion $W^{1,H(\cdot)}_0(\Omega\setminus E)\subset W^{1,H(\cdot)}_0(\Omega)$ is trivial, so we only need to show that $W^{1,H(\cdot)}_0(\Omega)\subset W^{1,H(\cdot)}_0(\Omega\setminus E)$. Let $\varphi \in C^{\infty}_{c}(\Omega)$. Since $cap_{H(\cdot)}(E)=0$, according to Remark \ref{r3}, there exists a sequence $(f_{j})_{j \in \N}\subset \left(\mathcal{R}_{H(\cdot)}(E)\cap C^{\infty}_{c}(\Omega)\right)$ such that
\begin{flalign}\label{fj}
0\le f_{j}\le 1 \ \ \mbox{and} \ \ \lim_{j\to \infty}\int_{\Omega}H(x,Df_{j}) \ \dx = 0
\end{flalign}
{and, for any $j\in \N$, the} map $\psi_{j}:=(1-f_{j})\varphi$ has support contained in $\Omega\setminus E${. Then} $(\psi_{j})_{j \in \N}\subset C^{\infty}_{c}(\Omega\setminus E)$. Moreover, \eqref{fj} and the dominated convergence theorem allow concluding that
\begin{flalign*}
\int_{\Omega\setminus E}H(x,D\psi_{j}-D\varphi) \ \dx=0,
\end{flalign*}
therefore $\varphi\in W^{1,H(\cdot)}_{0}(\Omega\setminus E)$. Since by Lemma \ref{l1} and the dominated convergence theorem we can approximate any $w \in W^{1,H(\cdot)}_{0}(\Omega \setminus E)$ in the modular {topology} 
via the sequence of truncations $({w}_{k})_{k\in \mathbb{N}}:=(\max \{-k,\min\{w,k\} \})_{k \in \mathbb{N}}$ we have
\begin{flalign*}
W^{1,H(\cdot)}_{0}(\Omega)\subset W^{1,H(\cdot)}_{0}(\Omega\setminus E),
\end{flalign*}
and the `if' part of the lemma is done. For the `only if' part, let $K\subset E$ be {arbitrary compact set}. By \eqref{cho}, it is sufficient to show that $cap_{H(\cdot)}(K)=0$. Let us fix an arbitrary $f \in \mathcal{R}_{H(\cdot)}(K)$. Since $W^{1,H(\cdot)}_{0}(\Omega)=W^{1,H(\cdot)}_{0}(\Omega\setminus E)$, there exists a sequence $(\varphi_{j})_{j \in \N}\subset C^{\infty}_{c}(\Omega\setminus E)$ such that {$\varphi_j\to f$ a.e. in $\Omega$ and} $\lim_{j\to \infty}\int_{\Omega}H(x,D\varphi_{j}-Df) \ \dx=0,$ thus the map $g_{j}:=f-\varphi_{j}\in \mathcal{R}_{H(\cdot)}(K)$ for all $j\in \N$. As a consequence of the definition of $cap_{H(\cdot)}$, we have
\begin{flalign*}
cap_{H(\cdot)}(K)\le \lim_{j\to \infty}\int_{\Omega}H(x,Dg_{j}) \ \dx=0,
\end{flalign*}
and the lemma follows.
\end{proof}
As a direct consequence of Lemma \ref{l2}, we show that sets of zero $\mathcal{H}_{H(\cdot)}$-measure are removable for $\mathcal{A}_{H(\cdot)}$-harmonic maps.
\begin{corollary}\label{coro:H0}
Let $E\subset \Omega$ be a relatively closed subset of $\Omega$ such that $\mathcal{H}_{H(\cdot)}(E)=0$ and $u\in W^{1,H(\cdot)}(\Omega)$ satisfying
\begin{flalign}\label{70}
\int_{\Omega\setminus E}A(x,Du)\cdot Dw \ \dx=0 
\end{flalign}
for all $w\in W^{1,H(\cdot)}_{0}(\Omega\setminus E)$. Then, $u$ is a solution to \eqref{A0} on the whole $\Omega$.
\end{corollary}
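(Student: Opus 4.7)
My plan is to combine Proposition~\ref{eq} with Lemma~\ref{l2} to convert the hypothesis on $E$ into a density statement for test functions, and then to pass to the limit in \eqref{70}.

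First, since $\mathcal{H}_{H(\cdot)}(E)=0<\infty$, Proposition~\ref{eq} gives $cap_{H(\cdot)}(E)=0$, whence Lemma~\ref{l2} supplies the identification $W^{1,H(\cdot)}_{0}(\Omega)=W^{1,H(\cdot)}_{0}(\Omega\setminus E)$. By the definition of $\mathcal{A}_{H(\cdot)}$-harmonicity in \eqref{calAH}, to conclude that $u$ solves \eqref{A0} on the whole of $\Omega$ it suffices to verify
\begin{flalign*}
\int_{\Omega}A(x,Du)\cdot D\varphi\,\dx=0\qquad\text{for every }\varphi\in C^{\infty}_{c}(\Omega).
\end{flalign*}
I would fix such a $\varphi$, regard it as an element of $W^{1,H(\cdot)}_{0}(\Omega\setminus E)$, and invoke Lemma~\ref{l1} to produce a sequence $(\varphi_{j})_{j\in\N}\subset C^{\infty}_{c}(\Omega\setminus E)$ with $\snr{D\varphi_{j}-D\varphi}\to 0$ modularly in $L^{H(\cdot)}(\Omega)$. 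Because $H(\cdot)$ is doubling, Remark~\ref{rem:density} promotes this to norm convergence $\nr{D\varphi_{j}-D\varphi}_{L^{H(\cdot)}(\Omega)}\to 0$.

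Each $\varphi_{j}$ lies in $W^{1,H(\cdot)}_{0}(\Omega\setminus E)$, so the hypothesis \eqref{70} immediately gives $\int_{\Omega}A(x,Du)\cdot D\varphi_{j}\,\dx=0$ for all $j$ (after extending $\varphi_{j}$ by zero across $E$, which is allowed since $\varphi_{j}$ is compactly supported away from $E$). To close the argument I would pass to the limit $j\to\infty$: the growth condition $\eqref{A}_{1}$ yields the pointwise bound $\snr{A(x,Du)}\le L\,H(x,\snr{Du})/\snr{Du}$, and combining this with the Young-type equivalence \eqref{ex3} and the convexity of $H^{*}(\cdot)$ shows that $H^{*}(x,\snr{A(x,Du)})\lesssim H(x,\snr{Du})\in L^{1}(\Omega)$, so $A(\cdot,Du)\in L^{H^{*}(\cdot)}(\Omega)$. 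The generalized H\"older inequality recalled after \eqref{nr2} then produces
\begin{flalign*}
\left|\,\int_{\Omega}A(x,Du)\cdot(D\varphi_{j}-D\varphi)\,\dx\,\right|\le 2\nr{A(\cdot,Du)}_{L^{H^{*}(\cdot)}(\Omega)}\nr{D\varphi_{j}-D\varphi}_{L^{H(\cdot)}(\Omega)}\xrightarrow[j\to\infty]{}0,
\end{flalign*}
which delivers the required identity.

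The only non-cosmetic step is the last one: identifying $A(\cdot,Du)$ as an element of the dual Musielak--Orlicz space $L^{H^{*}(\cdot)}(\Omega)$, which hinges on the conjugate identity \eqref{ex3}, and upgrading the modular convergence of $(D\varphi_{j})$ to norm convergence, which is the content of Remark~\ref{rem:density}. Both are standard consequences of the doubling character of $H(\cdot)$ under the balance condition \eqref{pq}, so the bulk of the proof really reduces to wiring Proposition~\ref{eq}, Lemma~\ref{l2} and Lemma~\ref{l1} together.
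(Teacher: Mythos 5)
Your argument follows the paper's route exactly: Proposition~\ref{eq} converts $\mathcal{H}_{H(\cdot)}(E)=0$ into $cap_{H(\cdot)}(E)=0$, Lemma~\ref{l2} then gives $W^{1,H(\cdot)}_{0}(\Omega)=W^{1,H(\cdot)}_{0}(\Omega\setminus E)$, and \eqref{70} extends to all of $W^{1,H(\cdot)}_{0}(\Omega)$; the paper stops there, while you usefully spell out the continuity of $w\mapsto\int_{\Omega}A(x,Du)\cdot Dw\,\dx$ via \eqref{ex3} and the H\"older inequality, which the paper takes for granted. One small correction: the approximating sequence $(\varphi_{j})\subset C^{\infty}_{c}(\Omega\setminus E)$ is supplied directly by the definition of the identity $W^{1,H(\cdot)}_{0}(\Omega)=W^{1,H(\cdot)}_{0}(\Omega\setminus E)$ established in Lemma~\ref{l2}, not by Lemma~\ref{l1}; invoking Lemma~\ref{l1} on the domain $\Omega\setminus E$ would presuppose that $\varphi$ already lies in $W^{1,1}_{0}(\Omega\setminus E)$, which is exactly what the zero-capacity hypothesis is needed to furnish.
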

\begin{proof}
Since $\mathcal{H}_{H(\cdot)}(E)=0$, by Proposition \ref{eq} we have that $cap_{H(\cdot)}(E)=0$, thus by Lemma~\ref{l2} we can conclude that $W^{1,H(\cdot)}_{0}(\Omega\setminus E)=W^{1,H(\cdot)}_{0}(\Omega)$, so \eqref{70} actually holds for all $w \in W^{1,H(\cdot)}_{0}(\Omega)$.
\end{proof}
\vspace{2mm}
\section{The obstacle problem for the double phase energy}\label{o}
This section is devoted to the study on the existence and main regularity properties of solutions to the obstacle problem defined by means of a differential operator with structure~\eqref{A}.
  
{\subsection{Existence and uniqueness of solutions to the obstacle problem}
In this section we infer existence and uniqueness of solutions to the obstacle problem as a consequence of classical results on solvability in reflexive Banach spaces and comparison principles. We assume $\mathcal{K}_{\psi,g}(\Omega)\not =\emptyset$, see~\eqref{con}. Notice that when $\psi\equiv g$, we have $\mathcal{K}_{\psi,g}(\Omega)\not =\emptyset$ since $\psi\in \mathcal{K}_{\psi,g}(\Omega)$. Our result reads as follows.

\begin{proposition}\label{prop:ex-uni}
Suppose that assumptions \eqref{A} and \eqref{pq} are satisfied and $\psi,g\in W^{1,H(\cdot)}(\Omega)$ are such that $\mathcal{K}_{\psi,g}(\Omega)\not =\emptyset$. Then there exists a~unique weak solution $v\in\mathcal{K}_{\psi,g}(\Omega)$ to problem \eqref{obs}.
\end{proposition}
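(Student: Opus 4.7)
The plan is to cast problem \eqref{obs} as a variational inequality driven by a strictly monotone operator defined on a non-empty, closed, convex subset of a reflexive Banach space, and then invoke a classical existence theorem of Browder--Lions--Stampacchia type; uniqueness will follow from the strict monotonicity of $\mathcal{A}_{H(\cdot)}$ combined with the Poincar\'e inequality \eqref{sopo3}. As a preliminary step I would verify the abstract setting: by Section~\ref{ssec:setting} the space $W^{1,H(\cdot)}(\Omega)$ is separable and reflexive; the admissible set $\mathcal{K}_{\psi,g}(\Omega)$ is convex by inspection, non-empty by hypothesis, and closed in $W^{1,H(\cdot)}(\Omega)$ because the constraint $v\ge \psi$ a.e.\ passes to a.e.\ (hence modular/norm) limits, while $W^{1,H(\cdot)}_{0}(\Omega)$ is itself closed in view of Lemma~\ref{l1} and Remark~\ref{rem:density}.

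Next I would check that the operator $\mathcal{A}_{H(\cdot)}\colon W^{1,H(\cdot)}(\Omega)\to (W^{1,H(\cdot)}(\Omega))^{*}$ defined in \eqref{calAH} is well-defined, monotone, coercive and hemicontinuous. For well-posedness, the growth estimate $\eqref{A}_{1}$ combined with the equivalence \eqref{ex3} shows that $\snr{A(\cdot,Dv)}\in L^{H^{*}(\cdot)}(\Omega)$, so by the H\"older inequality of Section~\ref{ssec:setting}, $\mathcal{A}_{H(\cdot)}v$ is a bounded linear functional on $W^{1,H(\cdot)}(\Omega)$. Strict monotonicity is exactly the content of \eqref{A-monotone}. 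For coercivity, $\eqref{A}_{2}$ yields a pointwise lower bound of the form $A(x,z)\cdot z\ge \tilde{c}\,H(x,z)$, which together with the Poincar\'e inequality \eqref{sopo3} applied to $v-g\in W^{1,H(\cdot)}_{0}(\Omega)$ and the modular/norm equivalence \eqref{nr1} yields the required coercivity of $v\mapsto \langle \mathcal{A}_{H(\cdot)}v,\, v-g\rangle$ along $\mathcal{K}_{\psi,g}(\Omega)$. Hemicontinuity of $t\mapsto \langle \mathcal{A}_{H(\cdot)}(v+tw), w'\rangle$ on $[0,1]$ is a direct consequence of the continuity of $z\mapsto A(x,z)$ and dominated convergence, with a uniform majorant furnished by $\eqref{A}_{1}$ and the H\"older inequality.

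Existence then follows from the classical variational inequality theorem for monotone, coercive and hemicontinuous operators on closed convex subsets of reflexive Banach spaces. For uniqueness, given two solutions $v_{1},v_{2}\in \mathcal{K}_{\psi,g}(\Omega)$, testing the inequality for $v_{1}$ against $w=v_{2}$ and the inequality for $v_{2}$ against $w=v_{1}$, and summing, produces
\[
\langle \mathcal{A}_{H(\cdot)}v_{1}-\mathcal{A}_{H(\cdot)}v_{2},\, v_{1}-v_{2}\rangle\le 0.
\]
Combined with \eqref{A-monotone}, this forces $Dv_{1}=Dv_{2}$ a.e.\ in $\Omega$; since $v_{1}-v_{2}\in W^{1,H(\cdot)}_{0}(\Omega)$, a further application of \eqref{sopo3} gives $v_{1}\equiv v_{2}$.

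The main obstacle is not the overall strategy, which is classical, but the careful translation of the Musielak--Orlicz--Sobolev data into the hypotheses of the abstract theorem: the duality $(W^{1,H(\cdot)}(\Omega))^{*}\sim W^{1,H^{*}(\cdot)}(\Omega)$, the two-sided modular/norm estimates \eqref{nr1}--\eqref{nr2} and the equivalence of modular and norm convergence (which holds precisely thanks to the balance condition \eqref{pq}, cf.\ Remark~\ref{rem:density}) are all essential to run the standard monotone-operator machinery exactly as one does in the uniformly elliptic $W^{1,p}$ case.
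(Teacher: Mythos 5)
Your overall strategy — monotone operator theory on the closed convex set $\mathcal{K}_{\psi,g}(\Omega)$ in the reflexive space $W^{1,H(\cdot)}(\Omega)$, existence from a Browder--Lions--Stampacchia type theorem, uniqueness from strict monotonicity — is exactly the one the paper follows (Steps 1--4 and 6 of the proof of Proposition~\ref{prop:ex-uni}). The verification of well-posedness of $\mathcal{A}_{H(\cdot)}$, of closedness and convexity of $\mathcal{K}_{\psi,g}$, and of monotonicity via \eqref{A-monotone}, all match. (You prove hemicontinuity whereas the paper proves weak continuity; both are acceptable inputs for appropriate versions of the abstract existence theorem, so that is not a substantive divergence.)

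However, your coercivity argument has a genuine gap, and it is precisely the one the paper flags in Remark~\ref{R5}. You claim that combining the lower bound coming from $\eqref{A}_{2}$ with the Poincar\'e inequality \eqref{sopo3} ``yields the required coercivity of $v\mapsto\langle\mathcal{A}_{H(\cdot)}v,v-g\rangle$.'' But the constant in \eqref{sopo3} is $c=c(n,p,q,[a]_{0,\alpha},\alpha,\|Dw\|_{L^{p}(\Omega)},\diam(\Omega))$ and, as the paper explicitly points out, it depends \emph{increasingly} on $\|Dw\|_{L^{p}(\Omega)}$ (the dependence enters through the proof of \eqref{sopo1}--\eqref{sopo2} in the degenerate regime \eqref{adeg}). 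Hence as $\|v\|_{W^{1,H(\cdot)}(\Omega)}\to\infty$ the Poincar\'e constant itself blows up, and the quotient $\langle\mathcal{A}_{H(\cdot)}v,v-g\rangle/\|v\|_{W^{1,H(\cdot)}(\Omega)}$ need not tend to infinity; this is, in the paper's words, ``an obstruction for small values of $p$.'' So the ``usual path involving Poincar\'e's inequality'' that you invoke does not deliver coercivity in the sense of Definition~\ref{d1}. The paper's fix is to establish only the weaker gradient-coercivity \eqref{66}, namely
\[
\frac{\langle\mathcal{A}_{H(\cdot)}w,w-w_{0}\rangle}{\|Dw\|_{L^{H(\cdot)}(\Omega)}}\longrightarrow\infty
\quad\text{as}\quad \|Dw\|_{L^{H(\cdot)}(\Omega)}\to\infty,
\]
and then to run the existence argument not on the full set $\mathcal{K}_{\psi,g}(\Omega)$ but on the truncated, bounded convex sets $\mathcal{K}^{\Lambda}_{\psi,g}(\Omega)$ of Remark~\ref{R5}, appealing to the version of the existence theorem valid on bounded sets from \cite[Section~1.6]{kiod}. (Note that on a fixed bounded set the Poincar\'e constant \emph{is} controlled, which is why the truncated argument closes.) Without this modification, or some equivalent device, your existence proof is incomplete. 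Your uniqueness argument, by contrast, is fine: once $Dv_{1}=Dv_{2}$ a.e.\ and $v_{1}-v_{2}\in W^{1,H(\cdot)}_{0}(\Omega)$, the conclusion follows whether one uses \eqref{sopo3} (the constant there is evaluated at gradient norm zero, so it causes no trouble) or simply the elementary fact that a $W^{1,1}_{0}$ function with vanishing gradient vanishes.
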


We recall some elementary facts about monotone operators defined on a reflexive Banach space, which finally will be applied to the operator $\mathcal{A}_{H(\cdot)}$, defined in~\eqref{calAH}.
  
\begin{definition}\label{d1}
Let $X$ be a reflexive Banach space with dual $X^{*}$ and $\langle \cdot,\cdot\rangle$ denote a pairing between $X^{*}$ and $X$. If $K\subset X$ is any closed, convex subset, then a map $T\colon K\to X^{*}$ is called monotone if it satisfies $
\langle Tw-Tv,w-v \rangle\ge 0 \ \ \mbox{for all} \ \ w,v\in K.
$ Moreover, we say that $T$ is coercive if there exists a $w_{0}\in K$ such that
\begin{flalign*}
\lim_{\nr{w}_{X}\to \infty}\frac{\langle Tw,w-w_{0} \rangle}{\nr{w}_{X}}=\infty \ \ \mbox{for all} \ \ w\in K.
\end{flalign*}
\end{definition}
The following proposition guarantees the existence of solution to variational inequalities associated to monotone operators.
\begin{proposition}\cite{kiod} \label{exist}
Let $K\subset X$ be a nonempty, closed, convex subset and $T\colon K\to X^{*}$ be monotone, weakly continuous and coercive on $K$. Then there exists an element $v\in K$ such that $\langle Tv,w-v\rangle\ge 0$ for all $w\in K$. 
\end{proposition}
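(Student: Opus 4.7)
The plan is to follow the classical three-step approach (finite-dimensional Galerkin approximation combined with Minty's monotonicity trick and coercivity) going back to Hartman--Stampacchia and Browder--Lions. First I would reduce to a bounded situation. Using the coercivity assumption at the distinguished point $w_{0}\in K$, I pick $R>\nr{w_{0}}_{X}$ large enough that $\langle Tw,w-w_{0}\rangle>0$ whenever $w\in K$ and $\nr{w}_{X}\ge R$, and set $K_{R}:=K\cap \overline{B}_{R}(0)$, which is nonempty, closed, bounded and convex, hence weakly compact by Mazur's theorem and reflexivity of $X$. Once a solution $v\in K_{R}$ of the variational inequality restricted to $K_{R}$ is obtained, any $w\in K$ can be reached via the convex combination $w_{t}:=v+t(w-v)$, which lies in $K_{R}$ for $t>0$ small because $\nr{v}_{X}<R$; testing the restricted inequality against $w_{t}$ and dividing by $t$ yields $\langle Tv,w-v\rangle\ge 0$.

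Next, I would build approximate solutions in finite dimensions. I fix an increasing family of finite-dimensional subspaces $F_{n}\subset X$ with $w_{0}\in F_{1}$ and $\bigcup_{n}F_{n}$ dense in $X$, and set $K_{n}:=K_{R}\cap F_{n}$, which is nonempty, compact and convex in $F_{n}$ for $n$ large. Equipping $F_{n}$ with an inner product and denoting by $P_{K_{n}}$ the associated metric projection, the map $\Phi_{n}(v):=P_{K_{n}}(v-T_{|F_{n}}v)$ is continuous from $K_{n}$ into itself; Brouwer's fixed-point theorem produces $v_{n}\in K_{n}$ with $v_{n}=\Phi_{n}(v_{n})$, which by the standard characterization of metric projection onto a convex set is equivalent to
\[
\langle Tv_{n},w-v_{n}\rangle\ge 0 \qquad \text{for all } w\in K_{n}.
\]

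Finally, I would pass to the weak limit via Minty's trick. The sequence $(v_{n})$ is bounded in $X$, so along a subsequence $v_{n}\rightharpoonup v$ with $v\in K_{R}$ by weak closedness of $K_{R}$. For any $w\in\bigcup_{m}K_{m}$, monotonicity and the inequality for $v_{n}$ give $\langle Tw,w-v_{n}\rangle \ge \langle Tv_{n},w-v_{n}\rangle\ge 0$, and passing to the limit using $v_{n}\rightharpoonup v$ yields $\langle Tw,w-v\rangle\ge 0$. A density step plus the weak continuity of $T$ propagates this to every $w\in K_{R}$. Applying it to $w_{t}:=v+t(w-v)\in K_{R}$ for $t\in(0,1)$, dividing by $t$ and letting $t\to 0^{+}$ while invoking weak continuity of $T$ along the strong convergence $w_{t}\to v$ gives $\langle Tv,w-v\rangle\ge 0$ for every $w\in K_{R}$, and hence for every $w\in K$ by the reduction above.

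The main technical obstacle I anticipate lies in arranging the Galerkin scheme so that the approximating sets $K_{n}=K_{R}\cap F_{n}$ are nonempty and rich enough to be dense in $K_{R}$; the standard remedy is to shift coordinates by a fixed reference point of $\mathcal{K}_{\psi,g}$ so that $0\in K$, and to enlarge the $F_{n}$ to contain a prescribed countable dense subset of $K_{R}$. Once this scaffolding is in place, the Minty step and the transition from "$Tw$" to "$Tv$" are routine consequences of monotonicity and weak continuity, and the coercivity inequality $\langle Tw,w-w_{0}\rangle/\nr{w}_{X}\to\infty$ is used only in the very first reduction to $K_{R}$.
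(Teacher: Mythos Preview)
The paper does not prove this proposition at all: it is quoted verbatim from \cite{kiod} (Kikuchi--Oden) and used as a black box in the proof of Proposition~\ref{prop:ex-uni}. There is therefore no ``paper's own proof'' to compare against.

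That said, your outline is the classical and correct argument (Hartman--Stampacchia/Browder). The three steps---coercivity reduction to a bounded $K_R$, Brouwer fixed point on finite-dimensional sections $K_n$, and passage to the limit via Minty's lemma---are exactly what the cited reference does. Two small points worth tightening: (i) in the reduction step you should make explicit that the solution $v$ of the restricted problem on $K_R$ actually satisfies $\nr{v}_X<R$ strictly (test the restricted inequality with $w_0$ and use the coercivity choice of $R$ to rule out $\nr{v}_X\ge R$), since otherwise the segment $w_t=v+t(w-v)$ need not enter $K_R$; (ii) the ``density step'' you flag is indeed the only genuinely delicate part, and your proposed remedy---translating so that $0\in K$ and enlarging the $F_n$ to contain a fixed countable dense subset of $K_R$---is the standard fix. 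With those in place the argument goes through; the ``weak continuity'' hypothesis (which in this paper means demicontinuity: strong convergence of arguments implies weak$^*$ convergence of images) is precisely what is needed for the final Minty step $\langle Tw_t,w-v\rangle\to\langle Tv,w-v\rangle$ along the segment $w_t\to v$.
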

\begin{proof}[Proof of Proposition~\ref{prop:ex-uni}] We verify all the assumptions of Proposition~\ref{exist} step by step. 

\medskip

\noindent \emph{Step 1: the space setting}. Let us start by noticing that $\mathcal{A}_{H(\cdot)}$ is defined on $X=W^{1,H(\cdot)}(\Omega)$, which  is a reflexive Banach space due to the structure of $H(\cdot)$. Notice that $\mathcal{A}_{H(\cdot)}(W^{1,H(\cdot)}(\Omega))\subset (W^{1,H(\cdot)}(\Omega))^{*}$. Indeed, when $v\in W^{1,H(\cdot)}(\Omega)$ and $w\in C_0^\infty(\Omega)$ from {Young's inequality,~\eqref{ex3}, and Lemma~\ref{sopo} we get
\begin{flalign}\label{i1}
\snr{\langle \mathcal{A}_{H(\cdot)}v,w \rangle}\le &L\int_{\Omega}\frac{H(x,Dv)}{\snr{Dv}}\snr{Dw} \ \dx \le c\left \| \frac{H(\cdot,Dv)}{\snr{Dv}}\right \|_{L^{H^{*}(\cdot)}(\Omega)}\nr{Dw}_{L^{H(\cdot)}(\Omega)}\nonumber \\
\le &c\max\left\{\left(\int_{\Omega}H(x,Dv) \ \dx\right)^{\frac{q}{q+1}},\left(\int_{\Omega}H(x,Dv) \ \dx\right)^{\frac{p}{p+1}}\right\}\nr{Dw}_{L^{H(\cdot)}(\Omega)}\nonumber \\
\le &c\nr{w}_{W^{1,H(\cdot)}(\Omega)}
\end{flalign}}
with $c=c(L,p,q,\nr{H(\cdot,Dv)}_{L^{1}(\Omega)})$, thus $\mathcal{A}_{H(\cdot)}v\in (W^{1,H(\cdot)}(\Omega))^{*}$.

\medskip

\noindent \emph{Step 2: the set $\mathcal{K}_{\psi,g}(\Omega)\subset W^{1,H(\cdot)}(\Omega)$  is closed and convex.}   In fact, for $\lambda \in [0,1]$ and $w_{1},w_{2}\in \mathcal{K}_{\psi,g}(\Omega)$, define $w_{\lambda}:=\lambda w_{1}+(1-\lambda)w_{2}$. Then $w_{\lambda}\in W^{1,H(\cdot)}(\Omega)$, $w_{\lambda}-g\in W^{1,H(\cdot)}_{0}(\Omega)$ and $w_{\lambda}\ge \psi$ a.e. in $\Omega$. Moreover, if $w\in W^{1,H(\cdot)}(\Omega)$ and $(w_{j})_{j \in \N}\subset \mathcal{K}_{\psi,g}(\Omega)$ is any sequence such that $\lim_{j\to \infty}\int_{\Omega}H(x,Dw_{j}-Dw) \ \dx=0$, then by the continuity of the trace operator, $w-g\in W^{1,H(\cdot)}_{0}(\Omega)$ and, by Lebesgue's dominated convergence theorem, $w\ge \psi$ a.e. in $\Omega$. 

\medskip
\noindent \emph{Step 3: weak continuity}. Let $v,(v_{j})_{j \in \N}\subset W^{1,H(\cdot)}(\Omega)$ be such that $v_{j}\to v \ \ \mbox{in} \ \ W^{1,H(\cdot)}(\Omega)$. Then it follows that there exists $M=M(\nr{v}_{W^{1,H(\cdot)}(\Omega)})$ such that $\sup_{j\in \N}\nr{v_{j}}_{W^{1,H(\cdot)}(\Omega)}\le M$. Moreover, up to a subsequence, $\lim_{j\to \infty}v_{j}(x)=v(x)$ and $\lim_{j\to \infty}Dv_{j}(x)= Dv(x)$ for a.e. $x\in \Omega$. Since $z\mapsto A(\cdot,z)$ is continuous we have, for any $w\in W^{1,H(\cdot)}(\Omega)$, 
\begin{flalign}\label{67}
\lim_{j\to \infty}A(x,Dv_{j}(x))\cdot Dw(x)=A(x,Dv(x))\cdot Dw(x) \ \ \mbox{for a.e.} \ \ x\in \Omega.
\end{flalign}
Moreover, if $E\subset \Omega$ is any measurable subset, then  {as in~\eqref{i1} we have}
\begin{flalign}\label{68}
\left | \  \int_{E}A(x,Dv_{j})\cdot Dw \ \dx \ \right |\le &\int_{E}\snr{A(x,Dv_{j})\cdot Dw} \ \dx\nonumber\\
\le& L\int_{E}\frac{H(x,Dv_j)}{\snr{Dv_j}}\snr{Dw} \ \dx \le c\nr{Dw}_{L^{H(\cdot)}(E)},
\end{flalign}
for $c=c(L,p,q,\nr{v}_{W^{1,H(\cdot)}(\Omega)})$, by recalling the dependency of $M$. By \eqref{67} and \eqref{68}, we {can apply Vitali's convergence theorem} getting
\begin{flalign*}
\lim_{j\to \infty} \langle \mathcal{A}_{H(\cdot)}v_{j},w \rangle=\lim_{j\to \infty}\int_{\Omega}A(x,Dv_{j})\cdot Dw \ \dx =\int_{\Omega}A(x,Dv)\cdot Dw \ \dx=\langle \mathcal{A}_{H(\cdot)}v,w \rangle,
\end{flalign*}
therefore $\mathcal{A}_{H(\cdot)}$ is weakly continuous on $W^{1,H(\cdot)}(\Omega)$.
\medskip

\noindent \emph{Step 4: monotonicity and coercivity of $\mathcal{A}_{H(\cdot)}$}. Monotonicity results directly from~\eqref{A-monotone}. As for coercivity, we fix $w,w_{0}\in \mathcal{K}_{\psi,g}(\Omega)$ and, using $\eqref{A}_{2}$ and the weighted H\"older's and Young's inequalities we obtain
\begin{flalign*}
\langle\mathcal{A}_{H(\cdot)}w,w-w_{0} \rangle=&\int_{\Omega}A(x,Dw)\cdot(Dw-Dw_{0}) \ \dx\nonumber \\
\ge &\nu\int_{\Omega}H(x,Dw) \ \dx -c\left(\int_{\Omega}\snr{Dw}^{p} \ \dx\right)^{\frac{p-1}{p}}\left(\int_{\Omega}\snr{Dw_{0}}^{p} \ \dx\right)^{\frac{1}{p}}\nonumber \\
&-c\left(\int_{\Omega}a(x)\snr{Dw}^{q} \ \dx\right)^{\frac{q-1}{q}}\left(\int_{\Omega}a(x)\snr{Dw_{0}}^{q} \ \dx\right)^{\frac{1}{q}}\nonumber \\
\ge &\frac{\nu}{2}\int_{\Omega}H(x,Dw) \ \dx-\tilde{c},
\end{flalign*}
with $\tilde{c}=\tilde{c}(\nu,L,p,q,\nr{H(\cdot,Dw_{0})}_{L^{1}(\Omega)})$. From \eqref{nr1} we have
\begin{flalign*}
\int_{\Omega}H(x,Dw) \ \dx\ge \min\left\{\nr{Dw}_{L^{H(\cdot)}(\Omega)}^{p},\nr{Dw}_{L^{H(\cdot)}(\Omega)}^{q}\right\},
\end{flalign*}
therefore, merging the content of the two previous displays we obtain
\begin{flalign}\label{66}
\frac{\langle \mathcal{A}_{H(\cdot)}w,w-w_{0}\rangle}{\nr{Dw}_{L^{H(\cdot)}(\Omega)}}\ge&\frac{\nu}{2}\min\left\{\nr{Dw}_{L^{H(\cdot)}(\Omega)}^{p-1},\nr{Dw}_{L^{H(\cdot)}(\Omega)}^{q-1}\right\}-\frac{\tilde{c}}{\nr{Dw}_{L^{H(\cdot)}(\Omega)}}\to\infty
\end{flalign}
as $\nr{Dw}_{L^{H(\cdot)}(\Omega)}\to \infty$. This is enough for the coercivity condition, see Remark \ref{R5} below.

\medskip

\noindent \emph{Step 5: Comparison principle}. We show that if $v\in W^{1,H(\cdot)}(\Omega)$ is a solution to problem \eqref{obs}, $\tilde{v}\in W^{1,H(\cdot)}(\Omega)$ is a supersolution to \eqref{A0} and $w:=\min\{v,\tilde{v}\}\in \mathcal{K}_{\psi,g}(\Omega)$, then $\tilde{v}(x)\ge v(x)$ for a.e. $x\in \Omega$. Being $\tilde{v}$ a supersolution to \eqref{A0}, the map $\tilde{w}:=v-\min\{\tilde{v},v\}$ is an admissible test in \eqref{sux} and, being $v$ a solution to \eqref{obs} and $w\in \mathcal{K}_{\psi,g}(\Omega)$ we have
\begin{flalign*}
\begin{cases}
\ \int_{\Omega\cap \{x\colon \tilde{v}(x)< v(x) \}}A(x,D\tilde{v})\cdot (Dv-D\tilde{v}) \ \dx \ge 0,\\
\ \int_{\Omega\cap \{x\colon \tilde{v}(x)< v(x)\}}A(x,Dv)\cdot (D\tilde{v}-Dv) \ \dx\ge 0.
\end{cases}
\end{flalign*}
Adding the two inequalities in the above display and {using \eqref{A-monotone}}, we obtain
\begin{flalign*}
0\le \int_{\Omega\cap \{x\colon \tilde{v}(x)< v(x)\}}\left(A(x,D\tilde{v})-A(x,Dv)\right)\cdot (Dv-D\tilde{v}) \ \dx \le 0,
\end{flalign*}
thus either $\snr{\Omega\cap \{x\colon \tilde{v}(x)< v(x)\}}=0$ or $D\tilde{v}=Dv$ a.e. on $\Omega\cap \{x\colon \tilde{v}(x)< v(x)\}$. This second alternative is excluded by the fact that $w\in \mathcal{K}_{\psi,g}(\Omega)$, so $v-\tilde{v}\in W^{1,H(\cdot)}_{0}(\Omega\cap \{x\colon \tilde{v}(x)< v(x)\})$. Therefore $\snr{\Omega\cap \{x\colon \tilde{v}(x)< v(x)\}}=0$ and $\tilde{v}\ge v$ a.e. in $\Omega$.

\medskip

\noindent \emph{Step 6: Conclusion}. 
By {\emph{Steps 1-4}} and Remark \ref{R5} we see that $\mathcal{K}_{\psi,g}(\Omega)$ and $\mathcal{A}_{H(\cdot)}$ satisfy the assumptions of Proposition \ref{exist}, therefore there exists a solution to problem \eqref{obs}. Let us prove that it is unique. If there were two solutions $v_{1},v_{2}\in \mathcal{K}_{\psi,g}(\Omega)$, then, by recalling that each of those two solutions is an admissible competitor for the other and {using \eqref{A-monotone}} we obtain
\begin{flalign*}
0\le \int_{\Omega}\left(A(x,Dv_{1})-A(x,Dv_{2})\right)\cdot (Dv_{2}-Dv_{1}) \ \dx \le 0.
\end{flalign*}
Hence $Dv_{1}(x)=Dv_{2}(x)$ for a.e. $x\in \Omega$ and since $v_{1}-v_{2}\in W^{1,H(\cdot)}_{0}(\Omega)$, we can conclude that $v_{1}=v_{2}$ almost everywhere.
\end{proof}

\begin{remark}\label{R5}
\rm In the proof of Proposition \ref{prop:ex-uni}, \emph{Step 4} deserves some clarification. As to show coercivity, we cannot follow the usual path involving Poincar\'e's inequality, see e.g. \cite{chaly,fush}, given that the constant in \eqref{sopo3} depends in an increasing way on the $L^{p}$-norm of the gradient. This can be an obstruction for small values of $p$. Even though Definition \ref{d1} prescribes that 
\begin{flalign*}
\lim_{\nr{w}_{W^{1,H(\cdot)}(\Omega)}\to \infty}\frac{\langle\mathcal{A}_{H(\cdot)}w,w-w_{0}\rangle}{\nr{w}_{W^{1,H(\cdot)}(\Omega)}}=\infty \ \ \mbox{for all} \ \ w\in \mathcal{K}_{\psi,g}(\Omega),
\end{flalign*}
while we only have \eqref{66}, we still can prove existence. In fact, for $\Lambda \ge 0$, set
\begin{flalign*}
\mathcal{K}^{\Lambda}_{\psi,g}(\Omega):=\mathcal{K}_{\psi,g}(\Omega)\cap \left\{w\in W^{1,H(\cdot)}(\Omega)\colon \nr{Dw}_{L^{H(\cdot)}(\Omega)}\le \Lambda\right\}
\end{flalign*}
Then, by \eqref{sopo3} and \eqref{nr1} it easily follows that
\begin{flalign*}
\mathcal{K}^{\Lambda}_{\psi,g}(\Omega)\subset \left\{w\in W^{1,H(\cdot)}(\Omega)\colon \nr{w}_{W^{1,H(\cdot)}(\Omega)}\le \bar{c}=\bar{c}(n,p,q,\Lambda,\nr{g}_{W^{1,H(\cdot)}(\Omega)},\diam(\Omega))\right\},
\end{flalign*}
thus $\mathcal{K}_{\psi,g}^{\Lambda}(\Omega)$ is bounded (and, of course, closed and convex) in $W^{1,H(\cdot)}(\Omega)$. This is enough for our purposes, see \cite[Section 1.6]{kiod}.\end{remark}
Furthermore, we have the following direct consequence of comparison principle of \emph{Step~5} above. 
\begin{remark}\label{rem-i} If $u\in W^{1,H(\cdot)}(\Omega)$ is a solution to \eqref{A0}, it is 
a supersolution to the same equation. Thus, whenever $v\in \mathcal{K}_{u}(\Omega)$ is a solution to problem \eqref{obs}, then $u(x)\ge v(x)$ for a.e. $x\in \Omega$.
\end{remark}
}
\subsection{Regularity for the obstacle problem} 
Let us concentrate first on Gehring's and De Giorgi's theories.
\begin{proposition}\label{prop:GDG} Suppose that assumptions \eqref{pq} and \eqref{A} are satisfied, $\psi,g\in W^{1,H(\cdot)}(\Omega)$ are such that $\mathcal{K}_{\psi,g}(\Omega)\not =\emptyset$, and $v\in \mathcal{K}_{\psi,g}(\Omega)$ is a solution to the obstacle problem \eqref{obs}. Then we have~\eqref{in:ge} and~\eqref{v-bounded}.
\end{proposition}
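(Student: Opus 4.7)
The plan is, for both assertions, to derive a Caccioppoli-type inequality in $H(\cdot)$-form using test functions tailored to the obstacle constraint $w\ge\psi$, and then to conclude via a classical self-improvement scheme: a Gehring-type lemma for \eqref{in:ge} and a De Giorgi iteration for \eqref{v-bounded}.

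For the reverse H\"older inequality \eqref{in:ge}, I would fix a ball $B_{\rr}\Subset\Omega$, a standard cutoff $\eta\in C^{\infty}_{c}(B_{\rr})$ with $\eta\equiv 1$ on $B_{\rr/2}$ and $\snr{D\eta}\le c/\rr$, and test \eqref{obs} with a competitor of the form $w:=v-\eta^{q}\phi$, where $\phi\ge 0$ is built from $v-\psi\ge 0$ --- for instance $\phi:=(v-\psi-(v-\psi)_{B_{\rr}})_{+}$, possibly after further truncation --- so as to guarantee $w\ge\psi$ almost everywhere and $w-g\in W^{1,H(\cdot)}_{0}(\Omega)$. Using the ellipticity bound \eqref{A-monotone}, the growth condition $\eqref{A}_{1}$, and the Young inequalities stated after \eqref{ex3}, the cross terms carrying derivatives of $\eta$ can be reabsorbed, producing a Caccioppoli-type estimate of the form
\begin{equation*}
\int_{B_{\rr/2}}H(x,Dv)\,\dx\le c\int_{B_{\rr}}H\Bigl(x,\tfrac{v-(v)_{B_{\rr}}}{\rr}\Bigr)\,\dx+c\int_{B_{\rr}}H(x,D\psi)\,\dx.
\end{equation*}
Combining this with the intrinsic Sobolev--Poincar\'e inequality \eqref{sopo1} of Lemma \ref{sopo} --- which dominates the first term on the right-hand side by an integral of $H(x,Dv)^{d_{1}}$ with $d_{1}<1$ --- yields a reverse H\"older inequality with obstacle perturbation $H(x,D\psi)$. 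A version of Gehring's lemma adapted to Musielak--Orlicz integrands (in the spirit of the schemes used in \cite{bacomi,Ok}) then upgrades this self-improvement into \eqref{in:ge} for some threshold $\delta_{0}>0$.

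For the $L^{\infty}$-estimate \eqref{v-bounded}, I would apply the level-set method of De Giorgi. For every $k\ge\nr{\psi}_{L^{\infty}(\Omega)}$ and $\eta$ a cutoff as above, the competitor $w:=v-\eta^{q}(v-k)_{+}$ is admissible in \eqref{obs}, because on $\{v\le k\}$ it coincides with $v\ge\psi$, on $\{v>k\}$ one has $w\ge k\ge\nr{\psi}_{L^{\infty}(\Omega)}\ge\psi$, and the boundary datum is preserved. Plugging $w$ into \eqref{obs} and invoking once more \eqref{A-monotone} and $\eqref{A}_{1}$, I obtain a Caccioppoli inequality on the super-level sets $A(k,B_{\rr}):=\{x\in B_{\rr}: v(x)>k\}$ of the shape
\begin{equation*}
\int_{A(k,B_{\rr/2})}H(x,D(v-k)_{+})\,\dx\le\frac{c}{\rr^{q}}\int_{A(k,B_{\rr})}H(x,(v-k)_{+})\,\dx.
\end{equation*}
Feeding this into the intrinsic Sobolev inequality \eqref{sopo2} and iterating along a geometrically shrinking sequence of concentric balls with a geometrically increasing sequence of levels $k_{j}$, De Giorgi's fast-geometric-convergence lemma forces $(v-k_{*})_{+}\equiv 0$ on $B_{\rr/2}$ for some $k_{*}$ controlled by the quantities in \eqref{v-bounded}. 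The corresponding lower bound follows by a symmetric argument, using $w:=v+\eta^{q}(k-v)_{+}$, which is automatically admissible since it only increases $v$.

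The main technical obstacle is the non-uniform ellipticity encoded in $H(x,z)$: the weighted $q$-growth term interacts intricately both with the obstacle corrections and with the derivatives of the cutoff, and the whole scheme must run uniformly in the degenerate regime \eqref{adeg} and in the non-degenerate one \eqref{andeg}. This is precisely where the closeness condition \eqref{pq} and the intrinsic Sobolev--Poincar\'e inequalities \eqref{sopo1}--\eqref{sopo2} enter; the dependence of their constants on $\nr{Dv}_{L^{p}(\Omega)}$ and $\nr{D\psi}_{L^{p}(\Omega)}$ is then transferred to the constants in \eqref{in:ge} and \eqref{v-bounded}, accounting for the corresponding entries in the dependency lists.
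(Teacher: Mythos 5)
Your overall architecture coincides with the paper's: an obstacle-compatible Caccioppoli estimate, combined with the intrinsic Sobolev--Poincar\'e inequalities of Lemma~\ref{sopo} and Gehring--Giaquinta--Modica for \eqref{in:ge}, and a level-set De~Giorgi iteration for \eqref{v-bounded}. Your De~Giorgi part in particular matches the paper almost verbatim (the paper fixes $M=\nr{\psi}_{L^{\infty}(\Omega)}+1$, works with $(v-M-\kappa)_{\pm}$, and iterates the estimate \eqref{dg2} obtained from \eqref{sopo2}); your observations on admissibility of $v\mp\eta^{q}(v-k)_{\pm}$ are exactly the ones the paper uses.

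In the Gehring part, however, the specific competitor you propose does not deliver the estimate you write. With $\phi:=(v-\psi-(v-\psi)_{B_{\rr}})_{+}$, testing \eqref{obs} against $w=v-\eta^{q}\phi$ only controls $\int\eta^{q}H(x,Dv)$ over the super-level set $\{v-\psi>(v-\psi)_{B_{\rr}}\}$: on its complement $D\phi=0$ a.e., and, unlike the contact set $\{v=\psi\}$, there is no reason there for $Dv=D\psi$, so the gradient is genuinely uncontrolled on a set that may carry most of the measure of $B_{\rr/2}$. Relatedly, the displayed Caccioppoli bound with $H\bigl(x,\frac{v-(v)_{B_{\rr}}}{\rr}\bigr)$ on the right is not reachable from any obstacle-admissible competitor: the constraint $w\ge\psi$ forces a $v-\psi$ (not $v-(v)_{B_{\rr}}$) structure in the test function. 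The paper avoids both issues by simply taking $w:=v-\eta^{q}(v-\psi)$, whose vanishing set is the contact set (where $D(v-\psi)=0$ a.e.), yielding
\begin{equation*}
\mint_{B_{\rr/2}}H(x,Dv)\ \dx\le c\mint_{B_{\rr}}H\Bigl(x,\tfrac{v-\psi}{\rr}\Bigr)\ \dx+c\mint_{B_{\rr}}H(x,D\psi)\ \dx,
\end{equation*}
after which \eqref{sopo1} and Gehring's lemma apply as you describe. This is a local repair; the rest of your route is the paper's.
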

\begin{proof}[Gehring's theory.] For $0<\rr\le 1$, let $B_{\rr}\Subset \Omega$, and take a cut-off function $\eta\in C^{1}_{c}(B_{\rr})$ such that $\chi_{B_{\rr/2}}\le \eta\le \chi_{B_{\rr}}$ and $\snr{D\eta}\le \rr^{-1}$. It is easy to see that the map $w:=v-\eta^{q}(v-\psi)\in \mathcal{K}_{\psi,g}(\Omega)$, so, by $\eqref{A}_{1,2}$, Young's inequality and \eqref{sopo1}, we have
\begin{flalign*}
\mint_{B_{\rr/2}}H(x,Dv) \ \dx\le& c\mint_{B_{\rr}}H\left(x,\frac{v-\psi}{\rr}\right) \ \dx+c\mint_{B_{\rr}}H(x,D\psi) \ \dx \nonumber \\
\le &c\left(\mint_{B_{\rr}}H(x,Dv)^{d_{1}} \ \dx\right)^{\frac{1}{d_{1}}}+c\mint_{B_{\rr}}H(x,D\psi) \ \dx,
\end{flalign*}
with $c=c(\textit{\data},\nr{Dv}_{L^{p}(\Omega)},\nr{D\psi}_{L^{p}(\Omega)})$. Now, we are in position to apply Gehring-Giaquinta-Modica's Lemma, \cite[Chapter 6]{gi}, to conclude that there exists a $\delta_{0}\in (0,\delta_{1})$ such that for all $\delta \in (0,\delta_{0})$ there holds
\begin{flalign*}
\left(\mint_{B_{\rr/2}}H(x,Dv)^{1+\delta} \ \dx\right)^{\frac{1}{1+\delta}}\le c\left(\mint_{B_{\rr}}H(x,D\psi)^{1+\delta} \ \dx\right)^{\frac{1}{1+\delta}}+c\mint_{B_{\rr}}H(x,Dv) \ \dx.
\end{flalign*}
Here $c$ and $\delta_{0}$ depend on $(\textit{\data},\nr{Dv}_{L^{p}(\Omega)},\nr{D\psi}_{L^{p}(\Omega)})$. Moreover, after a standard covering argument, we can conclude that $H(\cdot,Dv)^{1+\delta}\in L^{1}_{\mathrm{loc}}(\Omega)$ for all $\delta\in (0,\delta_{0})$.
\end{proof}
\begin{proof}[De Giorgi's Theory] Let $v$ be the solution to problem \eqref{obs} and $M$ be a non-negative constant to be adjusted in a few lines. For $s>0$ and $\lambda \in \mathbb{R}$, let us define
\begin{flalign*}
J^{\pm}_{v-M}(\lambda,s):=B_{s}\cap\left\{x\in \Omega\colon v(x)-M\gtrless\lambda\right\}.
\end{flalign*}
Fix radii $0<\rr<r\le 1$ and a ball $B_{r}\Subset \Omega$ and pick a cut-off function $\eta\in C^{1}_{c}(B_{r})$ so that $\chi_{B_{\rr}}\le \eta\le \chi_{B_{r}}$ and $\snr{D\eta}\le (r-\rr)^{-1}$. For $\kappa\ge 0$ and $M\ge \sup_{x\in B_{r}}\psi(x)$, the map $w^{+}:=v-\eta^{q}(v-M-\kappa)_{+}$ belongs by construction to $\mathcal{K}_{\psi,g}(\Omega)$, so testing \eqref{obs} against $w^{+}$, using $\eqref{A}_{1,2}$, Young's inequality and reabsorbing terms, we obtain
\begin{flalign}\label{28}
\int_{J^{+}_{v-M}(\kappa,\rr)}H\left(x,D(v-M-\kappa)_{+}\right) \ \dx \le c\int_{J^{+}_{v-M}(\kappa,r)}H\left(x,\frac{(v-M-\kappa)_{+}}{r-\rr}\right) \ \dx,
\end{flalign}
with $c=c(\nu,L,p,q)$. In a similar way, this time for any $\kappa\in \mathbb{R}$, we consider the map $w^{-}:=v+\eta^{q}(v-M-\kappa)_{-}$ where $\eta$ and $M$ are as before. Again, $w^{-}\in \mathcal{K}_{\psi,g}(\Omega)$, so inserting it as a test function in \eqref{obs} we get
\begin{flalign}\label{29}
\int_{J^{-}_{v-M}(\kappa,\rr)}H\left(x,D(v-M-\kappa)_{-}\right) \ \dx \le c\int_{J^{-}_{v-M}(\kappa,r)}H\left(x,\frac{(v-M-\kappa)_{-}}{r-\rr}\right) \ \dx,
\end{flalign}
where $c=c(\nu,L,p,q)$. Now we are in position to improve estimates \eqref{28}-\eqref{29} to
\begin{flalign}\label{dg2}
\int_{J_{v-M}^{\pm}(\kappa,\rr)}H\left(x,\frac{(v-M-\kappa)_{\pm}}{r}\right) \ \dx\le c\left(\frac{\snr{J_{v-M}^{\pm}(\kappa,r)}}{\snr{B_{r}}}\right)^{\iota}\int_{J^{\pm}_{v-M}(\kappa,r)}H\left(x,\frac{(v-M-\kappa)_{\pm}}{r-\rr}\right) \ \dx,
\end{flalign}
for $c=c(\textit{\data},\nr{Dv}_{L^{p}(\Omega)})$ and $\iota=\iota(n,p,q)=\frac{d_{2}-1}{d_{2}}$ with $d_2>1$ from Lemma~\ref{sopo}. Further, in the "$+$" case of \eqref{dg2} we  always take $\kappa\ge 0$, while for the "$-$" occurrence $\kappa\in \mathbb{R}$. Let $0<\rr<r\le 1$ and set $\theta:=\frac{\rr+r}{2}$. Notice that if $\snr{J_{v-M}^{\pm}(\kappa,r)}\ge \frac{1}{2}\snr{B_{r}}$ there is nothing to prove, therefore we assume that this is not the case. Pick $\eta\in C^{1}_{c}(B_{\theta})$ such that $\chi_{B_{\rr}}\le \eta\le \chi_{B_{\theta}}$ and $\snr{D\eta}\le (r-\rr)^{-1}$ and denote
\begin{flalign*}\tilde{v}_{+}:=\eta(v-M-\kappa)_{+}, \ \kappa\ge 0, \qquad \mbox{and} \qquad \tilde{v}_{-}:=\eta(v-M-\kappa)_{-},  \ \kappa\in \mathbb{R}.\end{flalign*} 
We see that $\snr{D\tilde{v}_{\pm}}\le \snr{D(v-M-\kappa)_{\pm}}+(v-M-\kappa)_{\pm}\snr{D\eta}$. By H\"older's inequality, \eqref{sopo2}, the fact that $\frac{r}{2}\le \theta\le r$, \eqref{28} and \eqref{29}, we obtain
\begin{flalign*}
\int_{J^{\pm}_{v-M}(\kappa,\rr)}&H\left(x,\frac{(v-M-\kappa)_{\pm}}{r}\right) \ \dx\le \int_{J^{\pm}_{v-M}(\kappa,\theta)}H\left(x,\frac{\tilde{v}_{\pm}}{r}\right) \ \dx\nonumber \\
\le &c\snr{J^{\pm}_{v-M}(\kappa,r)}^{\frac{d_{2}-1}{d_{2}}}\snr{B_{r}}^{\frac{1}{d_{2}}}\left(\mint_{B_{\theta}}H\left(x,\frac{\tilde{v}_{\pm}}{\theta}\right)^{d_{2}} \ \dx\right)^{\frac{1}{d_{2}}}\nonumber \\
\le &c\left(\frac{\snr{J^{\pm}_{v-M}(\kappa,\theta)}}{\snr{B_{r}}}\right)^{\iota}\left\{\int_{J^{\pm}_{v-M}(\kappa,\theta)}H\left(x,D(v-M-\kappa)_{\pm}\right)+H\left(x,\frac{(v-M-\kappa)_{\pm}}{r-\rr}\right) \ \dx\right\}\nonumber \\
\le &c\left(\frac{\snr{J^{\pm}_{v-M}(\kappa,\theta)}}{\snr{B_{r}}}\right)^{\iota}\int_{J^{\pm}_{v-M}(\kappa,r)}H\left(x,\frac{(v-M-\kappa)_{\pm}}{r-\rr}\right) \ \dx,
\end{flalign*}
which is \eqref{dg2} with the announced dependencies of the constants. At this point, we split the rest of the proof into three parts: in the first one we provide an upper bound to $(v-M)_{+}$, in the second one we control from above $(v-M)_{-}$, while in the third one we obtain the boundedness of $v$. Our main references are \cite{ditr,hahale}.\\\\
\emph{Step 1: control on the essential supremum.} Let $B_{R}\Subset \Omega$ be any ball with radius $R\le 1$, $M:=\nr{\psi}_{L^{\infty}(\Omega)}+1$ and, for $\tau\in (0,1/2]$, $\kappa>0$ and all $j\in \mathbb{N}$, define
\begin{flalign}\label{rrj}
&\kappa_{j}:=R\kappa\left(1-2^{-j}\right), \quad \rr_{j}:=R\left(\tau+(1-\tau){2^{-j}}\right),\nonumber \\
&B_{j}:=B_{\rr_{j}},\quad Y^{+}_{j}:=\mint_{B_{j}}H\left(x,\frac{(v-M-\kappa)_{+}}{R}\right) \ \dx. 
\end{flalign}
Notice that $\rr_{j}-\rr_{j+1}=\frac{R(1-\tau)}{2^{j+1}}$, $\kappa_{j+1}\ge \kappa_{j}$ and that $\rr_{j}/\rr_{j+1}\le 2$. Using $\eqref{dg2}_{+}$ with $\kappa=\kappa_{j+1}$, $\rr=\rr_{j+1}$ and $r=\rr_{j}$ we get
\begin{flalign*}
Y^{+}_{j+1}\le &\mint_{B_{j+1}}H\left(x,\frac{(v-M-\kappa_{j+1})_{+}}{R(\tau+(1-\tau)2^{-j})}\right) \ \dx\nonumber\\
\le &c\left(\frac{\snr{J^{+}_{v-M}(\kappa_{j+1},\rr_{j})}}{\snr{B_{j}}}\right)^{\iota}\mint_{B_{j}}H\left(x,\frac{(v-M-\kappa_{j+1})_{+}}{R(1-\tau)2^{-(j+1)}}\right) \ \dx\nonumber \\
\le &c\left(\frac{\snr{J^{+}_{v-M}(\kappa_{j+1},\rr_{j})}}{\snr{B_{j}}}\right)^{\iota}\mint_{B_{j}}H\left(x,\frac{(v-M-\kappa_{j})_{+}}{R(1-\tau)2^{-(j+1)}}\right) \ \dx\nonumber \\
\le &c\left(\frac{\snr{J^{+}_{v-M}(\kappa_{j+1},\rr_{j})}}{\snr{B_{j}}}\right)^{\iota}(1-\tau)^{-q}2^{jq}Y_{j}^{+},
\end{flalign*}
for $c=c(\textit{\data},\nr{Dv}_{L^{p}(\Omega)})$. Since on $J^{+}_{v-M}(\kappa_{j+1},\rr_{j})$ there holds $v-M-\kappa_{j}\ge \kappa_{j+1}-\kappa_{j}=2^{-(j+1)}R\kappa$, we can estimate
\begin{flalign*}
\frac{\snr{J^{+}_{v-M}(\kappa_{j+1},\rr_{j})}}{\snr{B_{j}}}\le &\snr{B_{j}}^{-1}\left[H^{-}_{B_{R}}\left(\frac{\kappa}{2^{j+1}}\right)\right]^{-1}\int_{ J^{+}_{v-M}(\kappa_{j+1},\rr_{j})}H\left(x,\frac{\kappa}{2^{j+1}}\right) \ \dx \nonumber\\
\le &\left(\frac{2^{j+1}}{\kappa}\right)^{p}\mint_{B_{j}}H\left(x,\frac{(v-M-\kappa_{j})_{+}}{R}\right) \ \dx=\left(\frac{2^{j+1}}{\kappa}\right)^{p}Y^{+}_{j},
\end{flalign*}
therefore, merging the content of the previous two displays we obtain
\begin{flalign*}
Y^{+}_{j+1}\le &c2^{j(p\iota+q)}\kappa^{-p\iota}(1-\tau)^{-q}(Y_{j}^{+})^{1+\iota},
\end{flalign*}
with $c=c(\data,\nr{Dv}_{L^{p}(\Omega)})$. By \cite[Lemma 7.1]{gi}, we have that $\lim_{j\to \infty}Y^{+}_{j}=0$ provided that $Y^{+}_{0}\le c^{-\frac{1}{\iota}}\kappa^{p}(1-\tau)^{\frac{q}{\iota}}2^{-\frac{p\iota+q}{\iota^{2}}}$. If we choose 
\begin{flalign*}
\kappa:=\left\{1+c^{\frac{1}{\iota}}(1-\tau)^{-\frac{q}{\iota}}2^{\frac{p\iota+q}{\iota^{2}}}\left(\mint_{B_{R}}H\left(x,\frac{(v-M)_{+}}{R}\right) \ \dx\right)\right\}^{\frac{1}{p}},
\end{flalign*}
and notice that $\lim_{j\to \infty}\kappa_{j}=R\kappa$ and $\lim_{j\to \infty}\rr_{j}=\tau R$, by means of Fatou's Lemma we can conclude that
\begin{flalign*}
\mint_{B_{\tau R}}H\left(x,\frac{(v-M-R\kappa)_{+}}{R}\right) \ \dx\le \liminf_{j\to \infty}Y^{+}_{j}=0,
\end{flalign*}
so, after a standard covering argument, we can conclude that $(v-M)_{+}\in L^{\infty}_{\mathrm{loc}}(\Omega)$ and if $\tilde{\Omega}\Subset \Omega$ is any open set, then
\begin{flalign}\label{dg3}
\nr{(v-M)_{+}}_{L^{\infty}(\tilde{\Omega})}\le c(\data,\nr{Dv}_{L^{p}(\Omega)},\nr{H(\cdot,Dv)}_{L^{1}(\Omega)}).
\end{flalign}
\emph{Step 2: control on the essential infimum.} As in \emph{Step 1}, let $B_{R}\Subset \Omega$ be any ball with radius $R\le 1$, $\tau\in (0,1/2]$, $M:=\nr{\psi}_{L^{\infty}(\Omega)}+1$ and, for $\kappa>0$ and $j\in \mathbb{N}$, take $\rr_{j}$, $B_{j}$, $\kappa_{j}$ as in \eqref{rrj} and set
\begin{flalign*}
Y_{j}^{-}:=\mint_{B_j}H\left(x,\frac{(v-M+\kappa_{j})_{-}}{R}\right) \ \dx.
\end{flalign*}
Using $\eqref{dg2}_{-}$ with $\kappa=-\kappa_{j+1}$, $\rr=\rr_{j+1}$ and $r=\rr_{j}$ we have
\begin{flalign*}
Y^{-}_{j+1}\le &\mint_{B_{j+1}}H\left(x,\frac{(v-M+\kappa_{j+1})_{-}}{R(\tau+(1-\tau)2^{-j})}\right) \ \dx\nonumber \\
\le &c\left(\frac{\snr{J^{-}_{v-M}(-\kappa_{j+1},\rr_{j})}}{\snr{B_{j}}}\right)^{\iota}\mint_{B_{j}}H\left(x,\frac{(v-M+\kappa_{j+1})_{-}}{R(1-\tau)2^{-(j+1)}}\right) \ \dx\nonumber \\
\le&c\left(\frac{\snr{J^{-}_{v-M}(-\kappa_{j+1},\rr_{j})}}{\snr{B_{j}}}\right)^{\iota}\mint_{B_{j}}H\left(x,\frac{(v-M+\kappa_{j})_{-}}{R(1-\tau)2^{-(j+1)}}\right) \ \dx\nonumber \\
\le &c\left(\frac{\snr{J^{-}_{v-M}(-\kappa_{j+1},\rr_{j})}}{\snr{B_{j}}}\right)^{\iota}(1-\tau)^{-q}2^{jq}Y^{-}_{j},
\end{flalign*}
with $c(\data,\nr{Dv}_{L^{p}(\Omega)})$. Notice that on $J^{-}_{v-M}(-\kappa_{j+1},\rr_{j})$ there holds $-\kappa_{j}-(v-M)\ge -\kappa_{j}+\kappa_{j+1}=2^{-(j+1)}R\kappa$, so we can bound
\begin{flalign*}
\frac{\snr{J^{-}_{v-M}(-\kappa_{j+1},\rr_{j})}}{B_{j}}\le &\snr{B_{j}}^{-1}\left[H^{-}_{B_{r}}\left(\frac{\kappa}{2^{j+1}}\right)\right]^{-1}\int_{J^{-}_{v-M}(-\kappa_{j+1},\rr_{j})}H\left(x,\frac{\kappa}{2^{j+1}}\right) \ \dx\nonumber \\
\le &\left(\frac{2^{j+1}}{\kappa}\right)^{p}\mint_{B_{j}}H\left(x,\frac{(v-M+\kappa_{j})_{-}}{R}\right) \ \dx=\left(\frac{2^{j+1}}{\kappa}\right)^{p}Y_{j}^{-}.
\end{flalign*}
Collecting the two above estimates we then get
\begin{flalign*}
Y^{-}_{j+1}\le c2^{j(p\iota+q)}\kappa^{-p\iota}(1-\tau)^{-q}(Y^{-}_{j})^{1+\iota},
\end{flalign*}
for $c=c(\data,\nr{Dv}_{L^{p}(\Omega)})$. From \cite[Lemma 7.1]{gi}, we have that $\lim_{j\to \infty}Y^{-}_{j}=0$ if $Y_{0}^{-}\le c^{-\frac{1}{\iota}}\kappa^{p}(1-\tau)^{\frac{q}{\iota}}2^{-\frac{p\iota+q}{\iota^{2}}}$. Choosing
\begin{flalign*}
\kappa:=\left\{1+c^{\frac{1}{\iota}}(1-\tau)^{-\frac{q}{\iota}}2^{\frac{p\iota+q}{\iota^{2}}}\left(\mint_{B_{R}}H\left(x,\frac{(v-M)_{-}}{R}\right) \ \dx\right)\right\}^{\frac{1}{p}},
\end{flalign*}
by Fatou's lemma we obtain
\begin{flalign*}
\mint_{B_{\tau R}}H\left(x,\frac{(v-M+R\kappa)_{-}}{R}\right) \ \dx\le \liminf_{j\to \infty}Y_{j}^{-}=0,
\end{flalign*}
therefore, after covering, we get that $(v-M)_{-}\in L^{\infty}_{\mathrm{loc}}(\Omega)$ and for any open $\tilde{\Omega}\Subset \Omega$ there holds
\begin{flalign}\label{dg4}
\nr{(v-M)_{-}}_{L^{\infty}(\tilde{\Omega})}\le c(\data,\nr{Dv}_{L^{\infty}(\Omega)},\nr{H(\cdot,Dv)}_{L^{1}(\Omega)}).
\end{flalign}
\emph{Step 3: conclusion.} Recall that, in \emph{Step 1} we fixed $M:=\nr{\psi}_{L^{\infty}(\Omega)}+1$, thus, from \eqref{dg3}-\eqref{dg4} we can conclude that $v\in L^{\infty}_{\mathrm{loc}}(\Omega)$ and for any open $\tilde{\Omega}\Subset \Omega$ we have
\begin{flalign*}
\nr{v}_{L^{\infty}(\tilde{\Omega})}\le c(\data,\nr{Dv}_{L^{p}(\Omega)},\nr{H(\cdot,Dv)}_{L^{1}(\Omega)},\nr{\psi}_{L^{\infty}(\Omega)}),
\end{flalign*}
which is \eqref{v-bounded}.
\end{proof}
\begin{remark}\label{r2}\rm 
{In the second part of the proof of {Proposition~\ref{prop:GDG}} 
}we see that the constant bounding the local $L^{\infty}$-norm of $v$  is non-decreasing with respect to $\nr{Dv}_{L^{p}(\Omega)}$, $\nr{H(\cdot,Dv)}_{L^{1}(\Omega)}$. When $v\in \mathcal{K}_{\psi}(\Omega)$, $\psi\in L^{\infty}(\Omega)$, is a solution to \eqref{obs}, then clearly $\psi \in \mathcal{K}_{\psi}(\Omega)$, so it is an admissible competitor to $v$ in \eqref{obs}. Testing \eqref{obs} with $w:=\psi$, using $\eqref{A}_{1,2}$, Young's inequality and reabsorbing terms we obtain
\begin{flalign*}
\int_{\Omega}H(x,Dv) \ \dx\le c\int_{\Omega}H(x,D\psi) \ \dx,
\end{flalign*}
with $c=c(\nu,L,p,q)$. This and the coercivity of $\snr{z}\mapsto H(\cdot,z)$, allow incorporating any dependency from $\nr{Dv}_{L^{p}(\Omega)}$, $\nr{H(\cdot,Dv)}_{L^{1}(\Omega)}$ into the one from $(\nu,L,p,q,\nr{H(\cdot,D\psi)}_{L^{1}(\Omega)})$.
\end{remark}
Proceeding further, we will use the following Harnack inequalities, valid for solutions to the obstacle problem \eqref{obs}, which are supersolutions to \eqref{A0}. 
\begin{proposition}\label{p6}{
Let $\tilde{\Omega}\Subset \Omega$ be an open set and $B_{\rr}\Subset \tilde{\Omega}$ be any ball with $\rr\le 1$. Suppose that assumptions \eqref{pq} and \eqref{A} are satisfied, $\psi\in W^{1,H(\cdot)}(\Omega)\cap L^\infty(\Omega)$ and $g\in W^{1,H(\cdot)}(\Omega)$ are such that $\mathcal{K}_{\psi,g}(\Omega)\not =\emptyset$, and $v\in \mathcal{K}_{\psi,g}(\Omega)$ is a solution to the obstacle problem \eqref{obs}. Assume further that $M>0$ is any constant for which \eqref{28} can be realized.} Then there holds 
\begin{flalign}\label{30}
\eup_{B_{\rr/2}}(v-M)_{+}\le c\left(\mint_{B_{\rr}}(v-M)_{+}^{h} \ dx\right)^{\frac{1}{h}} \ \ \mbox{for all} \ \ h\in (0,\infty)
\end{flalign}
for $c=c(\data,h,\nr{H(\cdot,Dv)}_{L^{1}(\Omega)},\nr{\psi}_{L^{\infty}(\Omega)})$. Moreover, if $\tilde{v}\in W^{1,H(\cdot)}(\Omega)$ is a non-negative supersolution to \eqref{A0}, then
\begin{flalign}\label{32}
\left(\mint_{B_{\rr}}{\tilde{v}}^{h_{-}} \ \dx\right)^{\frac{1}{h_{-}}}\le c\eif_{B_{\rr/2}}{\tilde{v}} \quad \mbox{for some} \ \ h_{-}>0,
\end{flalign}
with $c=c(\data, \nr{H(\cdot,D\tilde{v})}_{L^{1}(\Omega)})$ and $h_{-}=h_{-}(\data, \nr{H(\cdot,D\tilde{v})}_{L^{1}(\Omega)})$.
\end{proposition}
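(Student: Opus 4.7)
The plan is to run Moser--De Giorgi iterations based on the Caccioppoli inequality \eqref{28} and the intrinsic Sobolev--Poincar\'e bounds of Lemma \ref{sopo}. Since for $M\ge \sup_{B_{\rr}}\psi$ the obstacle constraint is inactive at the levels considered, $(v-M)_{+}$ behaves as a subsolution, while a solution to \eqref{obs} is automatically a supersolution to \eqref{A0}. Thus the two parts of the proposition mirror the classical subsolution/supersolution dichotomy of Trudinger for quasilinear equations, and the non-standard features enter only through the iterative constants.

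For \eqref{30}, I would first derive the estimate for the distinguished exponent $h=p$ and then interpolate. Starting from \eqref{28} with the dyadic sequences in \eqref{rrj} and combining with \eqref{sopo2} applied to $\eta(v-M-\kappa_{j})_{+}$, one obtains a recursion of the form $Y_{j+1}^{+}\le c\, b^{j}(Y_{j}^{+})^{1+\iota}$, $\iota=(d_{2}-1)/d_{2}$; a suitable choice of $\kappa$ together with \cite[Lemma 7.1]{gi} yields $\eup_{B_{\rr/2}}(v-M)_{+}\le c\,(\mint_{B_{\rr}}(v-M)_{+}^{p}\,\dx)^{1/p}$. The passage to all $h\in (0,\infty)$ relies on the standard interpolation trick \cite[Ch.\ V]{gi}: writing $(v-M)_{+}^{p}=(v-M)_{+}^{p-h}(v-M)_{+}^{h}$ on concentric balls $B_{\rr'}\subset B_{\rr}$ and using Young's inequality to absorb a small power of $\eup_{B_{\rr}}(v-M)_{+}$, one replaces the $L^{p}$-average by the $L^{h}$-average at the price of a constant $c=c(\data,h,\nr{H(\cdot,Dv)}_{L^{1}(\Omega)},\nr{\psi}_{L^{\infty}(\Omega)})$.

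For \eqref{32}, I would follow Trudinger's scheme. Testing \eqref{sux} against $w=\eta^{q}(\tilde v+\varepsilon)^{s-1}$ with $\varepsilon>0$: for $s<0$ the coercivity $\eqref{A}_{2}$ combined with Young's inequality and \eqref{sopo2} produces a reverse-Caccioppoli bound on $(\tilde v+\varepsilon)^{s/2}$ whose Moser iteration across dyadic radii gives $(\mint_{B_{\rr/2}}\tilde v^{s_{0}}\,\dx)^{1/s_{0}}\le c\,\eif_{B_{\rr/2}}\tilde v$ for a range of $s_{0}<0$, after sending $\varepsilon\downarrow 0$. Testing next with $w=\eta^{q}(\tilde v+\varepsilon)^{-1}$ produces a Caccioppoli inequality for $\log \tilde v$, whence the John--Nirenberg lemma gives $\log \tilde v\in \mathrm{BMO}_{\mathrm{loc}}$ with a universal oscillation bound. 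The standard crossover between small negative and small positive powers then delivers \eqref{32} with the announced dependence of $h_{-}$ on $(\data,\nr{H(\cdot,D\tilde v)}_{L^{1}(\Omega)})$.

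The principal obstacle is to keep the iterative constants under control in the presence of the non-uniform ellipticity of $H(\cdot)$. The exponent $d_{2}$ and the constant in \eqref{sopo2} depend on the dichotomy \eqref{adeg}--\eqref{andeg} and on $\nr{Dv}_{L^{p}(\Omega)}$, so these dependencies have to be tracked throughout the iteration and absorbed at the end via Remark \ref{r2} and the energy estimate on $v$. A secondary delicate point is the reabsorption step when testing with negative powers of $\tilde v$: the $a(x)\snr{z}^{q-1}$ contribution from $\eqref{A}_{1}$ yields error terms not manifestly dominated by the coercive left-hand side, and they must be handled either through weighted Young inequalities in the spirit of \eqref{A-monotone}, or by arguing separately in the two regimes \eqref{adeg} and \eqref{andeg} as was done in the proof of Lemma \ref{sopo}.
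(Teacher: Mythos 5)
The paper's proof of this proposition is deliberately short: after establishing the level--set Caccioppoli inequalities \eqref{28}--\eqref{29} and \eqref{36}, it invokes \cite[Sections 5--6]{hahale}, where the local boundedness estimate and the weak Harnack inequality are derived by a \emph{De Giorgi--type} iteration on super/sub--level sets, the quantities being $Y_j=\mint_{B_j}H(x,(\,\cdot\,-\kappa_j)_{\pm}/R)$. Your plan for \eqref{30} is compatible with that scheme (De Giorgi sup--bound for $h=p$, then the usual interpolation to arbitrary $h$), although you pass over the point the authors explicitly flag, namely that the estimate in \cite{hahale} carries an additional additive term which one must argue vanishes under the double--phase structure and \eqref{pq}; without this, the constant obtained from $Y_0=\mint H(x,(v-M)_+/R)$ is not uniformly comparable to $(\mint(v-M)_+^p)^{1/p}$ as $R\downarrow 0$, because the $a(x)t^q$--part of $H$ introduces a factor $(\|(v-M)_+\|_{L^\infty}/R)^{q-p}$.

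For \eqref{32} you genuinely deviate from the paper, proposing Trudinger's Moser scheme (negative powers, then $\log\tilde v\in\mathrm{BMO}$ and John--Nirenberg), whereas the paper derives the sublevel--set Caccioppoli \eqref{36} and runs De Giorgi through \cite{hahale}. The Moser route, as you describe it, has a concrete gap at the \emph{iteration/Sobolev step}, not at the reabsorption step that you single out. Testing with $\eta^q\tilde v^{\,s-1}$ does yield (exactly as in Lemma \ref{lem-cacc}) an inequality of the form
\begin{equation*}
\int\eta^q\tilde v^{\,s-2}H(x,D\tilde v)\,\dx\ \lesssim\ \int \tilde v^{\,s-2}H(x,|D\eta|\,\tilde v)\,\dx
\ =\ \int \tilde v^{\,s+p-2}|D\eta|^p\,\dx+\int a(x)\tilde v^{\,s+q-2}|D\eta|^q\,\dx,
\end{equation*}
but to bootstrap via \eqref{sopo2} one needs the left--hand side to equal $\int\eta^q H(x,D(\tilde v^{\gamma}))$ for a \emph{single} exponent $\gamma$. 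Since $\tilde v^{\,s-2}|D\tilde v|^p\sim|D(\tilde v^{(s-2+p)/p})|^p$ while $\tilde v^{\,s-2}a|D\tilde v|^q\sim a|D(\tilde v^{(s-2+q)/q})|^q$, the $p$-- and $q$--parts force different powers of $\tilde v$ unless $p=q$, so no single $w=\tilde v^{\gamma}$ makes Lemma \ref{sopo} applicable; the same mismatch breaks the $\log$--Caccioppoli step, where $\tilde v^{-p}H(x,D\tilde v)=|D\log\tilde v|^p+a(x)\tilde v^{\,q-p}|D\log\tilde v|^q\ne H(x,D\log\tilde v)$. This is precisely why the paper (and \cite{hahale}) stay on the De Giorgi side, where \eqref{36} is already homogeneous in $H(\cdot)$ and the iteration acts on level--set measures rather than on explicit powers. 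You would need either a genuinely two--track Moser iteration or a reduction to the De Giorgi framework to close this part of the argument.
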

\begin{proof} We start by proving \eqref{30}. We already showed in Proposition~\ref{prop:GDG} that $v$ is bounded, so, exploiting also \eqref{28}, from the content of \cite[Section 5]{hahale} we directly obtain \eqref{30}. Next, being $\tilde{v}$ a supersolution to \eqref{A0}, it solves
\begin{flalign}\label{35}
\int_{\Omega}A(x,D\tilde{v})\cdot Dw \ \dx \ge 0 \ \ \mbox{for all non-negative} \ \ w \in W^{1,H(\cdot)}(\Omega).
\end{flalign}
Fix radii $0<\rr<r\le 1$. Now, if $\eta\in C^{1}_{c}(B_{r})$ is the usual cut-off function with $\chi_{B_{\rr}}\le \eta\le \chi_{B_{r}}$ and $\snr{D\eta}\le (r-\rr)^{-1}$, testing \eqref{35} against $w:=\eta^{q}(\tilde{v}-\kappa)_{-}$, for $\kappa\in\mathbb{R}$, we have
\begin{flalign}\label{36}
\int_{J^{-}_{\tilde{v}}(\rr,\kappa)}H(x,D(\tilde{v}-\kappa)_{-}) \ \dx\le c\int_{J^{-}_{\tilde{v}}(r,\kappa)}H\left(x,\frac{(\tilde{v}-\kappa)_{-}}{r-\rr}\right) \ \dx,
\end{flalign}
with $c=c(\nu,L,p,q)$. Again, \cite[Section 6]{hahale} applies yielding \eqref{32}. Let us note that, due to the structure of $H(\cdot)$, the additional term on the right-hand side appearing in~\cite{hahale} vanishes.
\end{proof}
Finally, let us recall that solutions to \eqref{A0} are Lipschitz-continuous. The proof of this statement can be inferred from~\cite[Theorem 1]{bacomi}, up to minor changes.
\begin{proposition}\cite{bacomi}\label{p0}
Let $v\in W^{1,H(\cdot)}(\Omega)$ be a solution to \eqref{A0} with \eqref{pq} and \eqref{A} in force. Then $v\in W^{1,\infty}_{\mathrm{loc}}(\Omega)$. 
\end{proposition}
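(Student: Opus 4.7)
The plan is to replicate the scheme of \cite{bacomi}, adapted to the present vector field $A(x,z)$. The starting point is an approximation step: replace the singular integrand by a regularized one of the form $H_{\varepsilon}(x,z):=(\mu^{2}+\snr{z}^{2})^{p/2}+a_{\varepsilon}(x)(\mu^{2}+\snr{z}^{2})^{q/2}$, with $\mu\in(0,1)$ and $a_{\varepsilon}\in C^{\infty}(\Omega)$ a non-negative, $\alpha$-H\"older-continuous regularization of $a$ with $[a_{\varepsilon}]_{0,\alpha}\le [a]_{0,\alpha}$. Solve the Dirichlet problem associated to the corresponding operator $\mathcal{A}_{H_{\varepsilon}(\cdot)}$ on a fixed ball $B\Subset\Omega$ with boundary datum $v$; existence and uniqueness follow from Proposition~\ref{prop:ex-uni} applied with $\psi=g=v$. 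Denote this approximating sequence by $(v_{\varepsilon})_{\varepsilon>0}$.

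Next, I would establish Caccioppoli-type estimates of the second kind for $V_{p}(Dv_{\varepsilon})$ and $V_{q}(Dv_{\varepsilon})$ by differentiating the regularized equation and testing against $\eta^{2}(|Dv_\varepsilon|^{2}-k)_{+}\partial_{s}v_{\varepsilon}$-type functions. Combined with Gehring's lemma (as in Proposition~\ref{prop:GDG}) this yields higher integrability of $H_{\varepsilon}(\cdot,Dv_{\varepsilon})$ uniformly in $\varepsilon$. The uniform gradient integrability is the engine for the rest of the argument.

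The heart of the proof is the double-phase alternative on concentric balls $B_{\rr}\subset B$. In the degenerate regime \eqref{adeg}, the modulating coefficient $a_{\varepsilon}$ is dominated by $[a]_{0,\alpha}\rr^{\alpha}$, so $v_{\varepsilon}$ is close (in an appropriate excess sense) to a solution of the frozen $p$-Laplace-type equation, and one imports the classical $C^{1,\gamma}$-regularity for degenerate operators with its excess decay estimate. In the non-degenerate regime \eqref{andeg}, $H^{+}_{B_{\rr}}\sim H^{-}_{B_{\rr}}$ so the operator is uniformly elliptic with respect to the Orlicz-type weight $H^{-}_{B_{\rr}}$, and comparison with a solution of the corresponding frozen equation, whose regularity theory is standard, produces a decay of the same kind. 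The balance condition \eqref{pq} is precisely what allows transferring the higher-integrability surplus between the two scenarios and iterating a Campanato-type excess decay on dyadic balls, eventually forcing $Dv_{\varepsilon}\in L^{\infty}_{\mathrm{loc}}$ with a bound independent of $\varepsilon$.

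Once the uniform $L^{\infty}$ bound on $Dv_{\varepsilon}$ is in place, the compactness step is routine: extract a subsequence converging weakly* in $W^{1,\infty}_{\mathrm{loc}}$ and strongly in $W^{1,H(\cdot)}_{\mathrm{loc}}$ (using monotonicity~\eqref{A-monotone} to upgrade weak to strong convergence of gradients), identify the limit with $v$ by uniqueness, and transfer the bound. The main obstacle is the third step, namely obtaining the excess decay uniformly across the degenerate/non-degenerate dichotomy with constants independent of $\mu$ and $\varepsilon$: this is where the non-uniform ellipticity of $\mathcal{A}_{H(\cdot)}$ interacts delicately with the closeness condition \eqref{pq}, and where the argument of \cite{bacomi} must be carried out with care. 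Since this is done essentially verbatim as in \cite{bacomi}, only minor adjustments are needed to handle the general vector field $A$ rather than the specific gradient of $H$.
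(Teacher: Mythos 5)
The paper gives no proof here: the proposition is simply deferred to Theorem~1 of \cite{bacomi} ``up to minor changes,'' so there is no argument in the paper to compare yours against. Your sketch is a faithful high-level reconstruction of the Baroni--Colombo--Mingione scheme for that theorem --- regularization, second-order Caccioppoli estimates via the differentiated equation, Gehring higher integrability, the degenerate/non-degenerate alternative \eqref{adeg}/\eqref{andeg} with comparison against frozen problems, excess decay yielding a uniform Lipschitz bound, and a final compactness step --- and you correctly observe that passing from a minimizing functional to a general monotone vector field $A$ satisfying \eqref{A} is the only adaptation needed.

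One concrete slip is worth flagging. Proposition~\ref{prop:ex-uni} with $\psi=g=v$ does \emph{not} produce the Dirichlet problem for $\mathcal{A}_{H_{\varepsilon}(\cdot)}$: it produces the obstacle problem over $\mathcal{K}_{v,v}(B)$, whose solution $v_{\varepsilon}$ satisfies the one-sided constraint $v_{\varepsilon}\ge v$ a.e.\ and the variational inequality \eqref{obs} rather than the equation. That is a different object and, in general, not a solution of $-\diver A_{\varepsilon}(x,Dv_{\varepsilon})=0$. For the unconstrained Dirichlet problem one should instead appeal directly to the Browder--Minty theorem (Proposition~\ref{exist} applied on the full affine set $v+W^{1,H_{\varepsilon}(\cdot)}_{0}(B)$ with no obstacle), since $\psi\equiv-\infty$ is not admissible in Proposition~\ref{prop:ex-uni}; the paper itself makes this same move informally in Corollary~\ref{coro-cont}. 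This is easily repaired and does not undermine the overall plan, which is otherwise consistent with what the cited reference does.
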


\subsection{Proof  of Theorem \ref{T4}} Now we are ready to collect the content of this section and conclude the main properties of solutions to the obstacle problem.

\begin{proof}[Proof  of Theorem \ref{T4}] Existence and uniqueness are given by Proposition \ref{prop:ex-uni}, Gehring's and De Giorgi's assertions from Proposition~\ref{prop:GDG}. It suffices to concentrate on the remaining claims on continuity, $\mathcal{A}_{H(\cdot)}$-harmonicity and H\"older continuity.

\medskip

\noindent\emph{Continuity and $\mathcal{A}_{H(\cdot)}$-harmonicity}. For the transparency of the presentation, we split the proof into three steps.

\medskip

\noindent\emph{Step 1: lower semicontinuity}. By the virtue of \eqref{v-bounded}, $v$ is bounded. For $B_{\rr}(x_{0})\equiv B_{\rr}\Subset \Omega$, set {\[m_{\rr}:=\eif_{x\in B_{\rr}}v(x)\quad\text{ and }\quad M_{\rr}:=\eup_{x\in B_{\rr}}v(x).\]} Then, since $v$ is a solution to \eqref{obs}, then $\tilde{v}:=v-m_{\rr}$ is a non-negative supersolution to \eqref{A0} on $B_{\rr}$. Therefore inequality \eqref{32} applies rendering
\begin{flalign*}
\left(\mint_{B_{\rr}}(v-m_{\rr})^{h_{-}} \ \dx\right)^{\frac{1}{h_{-}}}\le c\,\eif_{x\in B_{\rr/2}}(v(x)-m_{\rr}),
\end{flalign*}
with {\it $c=c(\data,\nr{H(\cdot,Dv)}_{L^{1}(\Omega)},\nr{\psi}_{L^{\infty}(\Omega)})$}. Notice that there is no loss of generality in assuming $h_{-}\in (0,1)$. Then,
\begin{flalign}\label{37}
m_{\rr/2}-m_{\rr}\ge&c\left(\mint_{B_{\rr}}(v-m_{\rr})^{h_{-}} \ \dx\right)^{\frac{1}{h_{-}}}\ge c(M_{\rr}-m_{\rr})^{\frac{h_{-}-1}{h_{-}}}\left(\mint_{B_{\rr}}(v-m_{\rr}) \ \dx\right)^{\frac{1}{h_{-}}},
\end{flalign}
for {\it $c=c(\data,\nr{H(\cdot,Dv)}_{L^{1}(\Omega)},\nr{\psi}_{L^{\infty}(\Omega)})$}. From \eqref{37} we can conclude that
\begin{flalign*}
0\le \mint_{B_{\rr}}(v-m_{\rr}) \ \dx \le c(M_{\rr}-m_{\rr})^{(1-h_{-})}(m_{\rr/2}-m_{\rr})^{h_{-}}\xrightarrow[\rr\to 0]{}0.
\end{flalign*}
Thus, by Lebesgue's differentiation theorem we have
\begin{flalign}\label{38}
v(x_{0})=\lim_{\rr\to 0}\mint_{B_{\rr}(x_{0})}v(x)\ \dx =\lim_{\rr\to 0}\eif_{x\in B_{\rr}(x_{0})}v(x) \ \ \mbox{for a.e.} \ x_{0}\in \Omega.
\end{flalign}
Set $\bar{v}(x_{0}):=\lim_{\rr\to 0}{\eif_{x\in B_{\rr}(x_{0})}v(x)}$ and notice that $\bar{v}(x_{0})=\lim_{\rr\to 0}\eif_{x\in B_{\rr}(x_{0})}\bar{v}(x),$ hence $\bar{v}$ is lower semicontinuous. Identity \eqref{38} then gives that $v$ admits a lower semicontinuous representative.\\\\
\emph{Step 2: continuity}. From now on we will identify the lower semicontinuous representative $\bar{v}$ with $v$. Since $v\in \mathcal{K}_{\psi,g}(\Omega)$ and $\psi\in C(\Omega)$, for all $x_{0}\in \Omega$ there holds
\begin{flalign*}
v(x_{0})=\lim_{\rr\to 0}\eif_{y\in B_{\rr}(x_{0})}v(y)\ge \lim_{\rr\to 0}\eif_{y\in B_{\rr}(x_{0})}\psi(y)=\psi(x_{0}).
\end{flalign*}
Fix $\varepsilon>0$ and $B_{\rr}(x_{0})$ such that $B_{4\rr}(x_{0})\Subset \Omega$. Thus, from the continuity of $\psi$ and the lower semicontinuity of $v$, for $\rr$ sufficiently small we have
\begin{flalign}\label{40}
\sup_{x\in B_{\rr}(x_{0})}\psi(x)\le \varepsilon+ v(x_{0}) \ \ \mbox{and} \ \ \inf_{x\in B_{\rr}(x_{0})}v(x)>v(x_{0})-\varepsilon.
\end{flalign}
We apply \eqref{30} to $v-M$ with $M:=v(x_{0})+\varepsilon$, which is admissible by $\eqref{40}_{1}$, to get
\begin{flalign*}
\eup_{x\in B_{\rr/2}(x_{0})}&\left(v(x)-(v(x_{0})+\varepsilon)\right)\le \eup_{x\in B_{\rr/2}(x_{0})}\left(v(x)-(v(x_{0})+\varepsilon)\right)_{+}\nonumber \\
\le &c\mint_{B_{\rr}(x_{0})}\left(v(x)-(v(x_{0})+\varepsilon)\right)_{+} \ \dx\le c\mint_{B_{\rr}(x_{0})}(v(x)-v(x_{0})+\varepsilon) \ \dx\nonumber \\
=&c\left(\mint_{B_{\rr}(x_{0})} v(x) \ \dx- v(x_{0})\right)+c\varepsilon,
\end{flalign*}
for {\it $c=c(\data,\nr{H(\cdot,Dv)}_{L^{1}(\Omega)},\nr{\psi}_{L^{\infty}(\Omega)})$}. We also used $\eqref{40}_{2}$ in the last inequality above. We can rearrange the content of the previous display in a more convenient way:
\begin{flalign}\label{39}
\eup_{x\in B_{\rr/2}(x_{0})}v(x)\le c\left(\mint_{B_{\rr}(x_{0})}v(x) \ \dx-v(x_{0})\right)+(1+c)\varepsilon+v(x_{0}).
\end{flalign}
Recall that we are considering the lower semicontinuous representant of $v$, so \emph{for every} $x_{0}\in \Omega$, 
$$\lim_{\rr\to 0}\mint_{B_{\rr}(x_{0})}v(x) \ \dx=v(x_{0}),$$ 
thus we can send $\rr\to 0$ and $\varepsilon\to 0$ in \eqref{39} to conclude
\begin{flalign*}
\lim_{\rr\to 0}\eup_{x\in B_{\rr}(x_{0})}v(x)\le v(x_{0}).
\end{flalign*}
This and the lower semicontinuity proved in \emph{Step 1} render that $v$ is continuous.\\\\
\emph{Step 3: {$\mathcal{ A}_{H(\cdot)}$-harmonicity outside of} the contact set.} Define 
\begin{flalign}\label{om}
\Omega_{0}:=\left\{x \in \Omega\colon v(x)>\psi(x)\right\}, \ \ \Omega_{c}:=\left\{x\in \Omega\colon v(x)=\psi(x)\right\},
\end{flalign}
and pick $\eta \in C^{\infty}_{c}(\Omega_{0})$. Then, using the continuity of $v$ and $\psi$, for any $x_{0}\in \supp(\eta)$ we obtain
\begin{flalign*}
v(x_{0})-\psi(x_{0})\ge \min_{x\in \supp (\eta)}(v(x)-\psi(x))\ge \frac{\min_{x\in \supp (\eta)}(v(x)-\psi(x))}{2\max_{x\in\supp (\eta)}\snr{\eta(x)}}\max_{x\in \supp (\eta)}\snr{\eta(x)}.
\end{flalign*}
Fix $s_{0}:= \frac{\min_{x\in \supp (\eta)}(v(x)-\psi(x))}{2\max_{x\in\supp\, (\eta)}\snr{\eta(x)}}$, and notice that, for all $s\in (-s_{0},s_{0})$, $v+s\eta\ge \psi$. Hence,  $w:=v+s\psi$ is admissible in \eqref{obs}, so we get
\begin{flalign*}
\int_{\Omega_{0}}A(x,Dv)\cdot D(s\eta) \ \dx \ge 0.
\end{flalign*}
Since $s$ can be either positive or negative {and $\eta$ is arbitrary}, we can conclude that $v$ solves
\begin{flalign*}
\int_{\Omega_{0}}A(x,Dv)\cdot D\eta \ \dx =0 \ \ \mbox{for all} \ \ \eta \in C^{\infty}_{c}(\Omega_{0}).
\end{flalign*}
\noindent\emph{H\"older regularity}. By assumption, $\psi\in C^{0,\beta_{0}}(\Omega)$ for some $\beta_{0}\in (0,1]$. Let $ B_{\rr}\equiv B_{\rr}(x_{0})$ be any ball with $0<\rr\le 1$ and so that $B_{4\rr}\Subset \Omega$. Notice that, by \emph{Step 2}, the set $\Omega_{0}$ is open since $v$ is continuous. If $B_{\rr}\cap \Omega_{c}=\emptyset$, by \emph{Step 3}, $v$ solves \eqref{A0} in $\Omega_{0}$. Proposition \ref{p0} then applies, so we get that $v\in C^{0,\beta}(B_{\rr})$ for any $\beta\in (0,1]$. On the other hand, if $B_{\rr}\cap \Omega_{c}\not=\emptyset$, we pick $x_{+}$, $x_{-}\in \bar{B}_{4\rr}$ so that $\psi(x_{+})=\sup_{x\in B_{4\rr}}\psi(x)$ and $\psi(x_{-})=\inf_{x\in B_{4\rr}}\psi(x)$. Now set
\begin{flalign*}
\vartheta_{+}:=\osc_{x\in B_{2\rr}}\psi(x)+\inf_{x\in B_{2\rr}}v(x) \quad \mbox{and} \quad \vartheta_{-}:=\osc_{x\in B_{2\rr}}\psi(x)-\inf_{x\in B_{2\rr}}v(x),
\end{flalign*}
and let $\bar{x}\in \left(\Omega_{c}\cap B_{\rr}\right)$.
Since
\begin{flalign*}
\inf_{x\in B_{2\rr}}\psi(x)\le \inf_{x\in B_{2\rr}}v(x)\le v(\bar{x})=\psi(\bar{x})\le \sup_{x\in B_{2\rr}}\psi(x),
\end{flalign*}
we have that {$\nr{\psi}_{L^{\infty}(B_{2\rr})}\le \vartheta_{+}$}. Therefore, by \eqref{30} with $M=\vartheta_{+}$ we have
\begin{flalign}\label{c2}
\sup_{x\in B_{\rr}}(v(x)-\vartheta_{+})_{+}\le c\left(\mint_{B_{2\rr}}(v(x)-\vartheta_{+})_{+}^{h} \ \dx\right)^{\frac{1}{h}} \ \ \mbox{for all} \ \ h \in (0,\infty),
\end{flalign}
with {\it $c=c(\data,\nr{H(\cdot,Dv)}_{L^{1}(\Omega)},\nr{\psi}_{L^\infty(\Omega)},h)$}. We stress that, for all $x\in B_{2\rr}$,
\begin{flalign}\label{c1}
\begin{cases}
\ v(x)-\vartheta_{+}\le v(x)+\osc_{x\in B_{2\rr}}\psi(x)-\inf_{x\in B_{2\rr}}v(x)=v(x)+\vartheta_{-},\\
\ v(x)+\vartheta_{-}\ge 0.
\end{cases}
\end{flalign}
From \eqref{c2} with $h=h_{-}$ (the exponent appearing in \eqref{32}) and $\eqref{c1}$ we immediately get
\begin{flalign}\label{c3}
\sup_{x\in B_{\rr}}(v(x)-\vartheta_{+})_{+}\le c\left(\mint_{B_{2\rr}}(v(x)+\vartheta_{-})^{h_{-}} \ \dx\right)^{\frac{1}{h_{-}}},
\end{flalign}
where {\it $c=c(\data,\nr{H(\cdot,Dv)}_{L^{1}(\Omega)},\nr{\psi}_{L^\infty(\Omega)})$}. Moreover, by $\eqref{c1}_{2}$ we also see that $v+\vartheta_{-}$ is a non-negative supersolution to \eqref{A0} in $B_{2\rr}$, thus, using the definition of $\vartheta_{+}$, \eqref{c3} and \eqref{32}  we have
\begin{flalign*}
\osc_{x\in B_{\rr}}v(x)-\osc_{x\in B_{2\rr}}\psi(x)\le &\sup_{x\in B_{\rr}}(v(x)-\vartheta_{+})_{+}\le c\inf_{x\in B_{\rr}}(v(x)+\vartheta_{-})\nonumber \\
\le&c\left(\inf_{x\in B_{\rr}}v(x)+\osc_{x\in B_{2\rr}}\psi(x)-\inf_{x\in B_{2\rr}}v(x)\right)\nonumber \\
\le &c\left(v(\bar{x})+\osc_{x\in B_{2\rr}}\psi(x)-\inf_{x\in B_{2\rr}}\psi(x)\right)\nonumber \\
\le &c\left(\psi(\bar{x})+\osc_{x\in B_{2\rr}}\psi(x)-\inf_{x\in B_{2\rr}}\psi(x)\right)\le c\osc_{x\in B_{2\rr}}\psi(x),
\end{flalign*}
for {\it $c=c(\data,\nr{H(\cdot,Dv)}_{L^{1}(\Omega)},\nr{\psi}_{L^\infty(\Omega)})$}. The content of the above display renders
\begin{flalign*}
\osc_{x\in B_{\rr}}v(x)\le c\osc_{x\in B_{2\rr}}\psi(x),
\end{flalign*}
i.e. $[v]_{0,\beta_{0};B_{\rr}}\le c$ with {\it $c=c(\data,\nr{H(\cdot,Dv)}_{L^{1}(\Omega)},\nr{\psi}_{L^{\infty}(\Omega)},[\psi]_{0,\beta_{0}})$}. After covering, we can conclude that $v \in C^{0,\beta_{0}}_{\mathrm{loc}}(\Omega)$.
\end{proof}

As a direct consequence of `\emph{Continuity and $\mathcal{A}_{H(\cdot)}$-harmonicity}' of Theorem \ref{T4} we infer the existence of a continuous extension of solutions to \eqref{A0}.
\begin{corollary}\label{coro-cont}
If $u \in W^{1,H(\cdot)}(\Omega)$ is a solution to \eqref{A0}, then there exists $v\in C(\Omega)$ such that $u=v$ almost everywhere.
\end{corollary}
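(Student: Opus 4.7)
The plan is to realize $u$ as a solution of an obstacle problem with a continuous constant obstacle and then apply the continuity and $\mathcal{A}_{H(\cdot)}$-harmonicity part of Theorem~\ref{T4}. Fix an open set $\Omega'\Subset\Omega$; it suffices to produce a continuous representative on each such $\Omega'$ and patch, since two continuous representatives that agree almost everywhere must coincide pointwise on their common domain.

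First, I would use that solutions to \eqref{A0} are locally essentially bounded. This can be quoted from Proposition~\ref{p0} (which gives local Lipschitz regularity), or established directly through a De Giorgi iteration in the spirit of the proof of \eqref{v-bounded} in Theorem~\ref{T4}. Either way one obtains $M>0$ with $|u|\le M$ a.e.\ in $\Omega'$. I would then set $\psi\equiv -M-1$, so that $\psi\in W^{1,H(\cdot)}(\Omega')\cap C(\Omega')$ and $\psi<u$ a.e.\ in $\Omega'$, and take the boundary datum $g:=u$, so that $u\in \mathcal{K}_{\psi,u}(\Omega')\neq\emptyset$. Theorem~\ref{T4} then produces a unique $v\in \mathcal{K}_{\psi,u}(\Omega')$ solving \eqref{obs} on $\Omega'$, and since the obstacle is continuous, the continuity assertion of Theorem~\ref{T4} guarantees $v\in C(\Omega')$.

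To conclude I would verify that $u$ itself solves this obstacle problem, so that uniqueness forces $u=v$ a.e.\ on $\Omega'$. For any admissible competitor $w\in \mathcal{K}_{\psi,u}(\Omega')$ the function $\phi:=w-u$ belongs to $W^{1,H(\cdot)}_{0}(\Omega')$, which extended by zero lies in $W^{1,H(\cdot)}_{0}(\Omega)$. Using the weak formulation of \eqref{A0} together with the density of $C^{\infty}_{c}$-functions in $W^{1,H(\cdot)}_{0}$ under modular convergence (Lemma~\ref{l1}) and the weak continuity of $\mathcal{A}_{H(\cdot)}$ established in \emph{Step 3} of Proposition~\ref{prop:ex-uni}, one obtains
\begin{flalign*}
\int_{\Omega'} A(x,Du)\cdot D(w-u)\dx \,=\,0\,\ge\,0,
\end{flalign*}
which is precisely the variational inequality \eqref{obs}. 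Uniqueness in Proposition~\ref{prop:ex-uni} then yields $u=v$ a.e.\ on $\Omega'$, so $u$ admits a continuous representative on $\Omega'$; exhausting $\Omega$ by such subsets produces the claimed $v\in C(\Omega)$.

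The only non-routine ingredient is the local essential boundedness of $u$, on which the choice of the constant obstacle depends. Everything else is a matter of testing the weak equation against admissible variations and invoking uniqueness for the obstacle problem already proved in Proposition~\ref{prop:ex-uni}.
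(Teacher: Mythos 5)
Your argument is correct and follows the paper's own strategy: realize $u$ as a solution to an obstacle problem with a nonbinding continuous obstacle and invoke the continuity assertion of Theorem~\ref{T4}. The only difference is in implementation: the paper takes the informal shortcut $\psi\equiv-\infty$, whereas you localize to $\Omega'\Subset\Omega$ and choose a finite constant obstacle lying below $u$ (available by local boundedness, e.g.\ from Proposition~\ref{p0}); this is the rigorous version of the same idea, since $-\infty$ is not literally an element of $W^{1,H(\cdot)}(\Omega)$. One minor citation note: to justify $\int_{\Omega'}A(x,Du)\cdot D(w-u)\dx=0$ by density, the ingredient you need is that $\mathcal{A}_{H(\cdot)}u$ defines a bounded functional on $W^{1,H(\cdot)}_{0}(\Omega)$ (display \eqref{i1} in Step 1 of Proposition~\ref{prop:ex-uni}, combined with Remark~\ref{rem:density} on the equivalence of modular and norm convergence), rather than Step 3's weak continuity, which concerns convergence of $\mathcal{A}_{H(\cdot)}v_{j}$ as the \emph{argument} $v_{j}$ varies, not convergence as the test function varies.
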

\begin{proof}
Any solution $u$ to \eqref{A0} can be seen as a solution to problem \eqref{obs} in $\mathcal{K}_{-\infty,u}(\Omega)$ with obstacle constantly equal to $-\infty$. Then the third part of Theorem \ref{T4} applies rendering that $u$ can be redefined on sets of zero $n$-dimensional Lebesgue measure so that it becomes continuous.
\end{proof}
\section{Removable sets}\label{sec:rem}
In this last section we prove our main result, i.e. Theorem \ref{T6}. 
\subsection{Auxiliary results}
To begin, we show a Caccioppoli-type inequality for non-negative supersolutions to \eqref{A0}.
\begin{lemma}\label{lem-cacc}
Under assumptions \eqref{pq} and \eqref{A}, let $B_{\rr}\Subset \Omega$ be any ball, $\tilde{v}\in W^{1,H(\cdot)}(\Omega)$ a supersolution to \eqref{A0}, non-negative in $B_{\rr}$ and $\eta\in C^{1}_{c}(B_{\rr})$. Then for all $\gamma\in(1,p)$ there holds
\begin{flalign*}
\int_{B_{\rr}}\tilde{v}^{-\gamma}\eta^{q}H(x,D\tilde{v}) \ \dx\le c\int_{B_{\rr}}\tilde{v}^{-\gamma}H(x,\snr{D\eta}\tilde{v}) \ \dx,
\end{flalign*}
with $c=c(\nu,L,p,q,\gamma)$.
\end{lemma}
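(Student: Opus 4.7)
\textbf{Proof plan for Lemma~\ref{lem-cacc}.} The plan is to test the supersolution inequality \eqref{sux} against a regularized test function built from a negative power of $\tilde v$, exploit the good sign produced by $\gamma>1$ to isolate a Caccioppoli term, handle the $p$- and $q$-growth separately via Young's inequality, and finally remove the regularization.

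\emph{Step 1 (test function).} Fix $\epsilon>0$ and set $\varphi_\epsilon:=\eta^q(\tilde v+\epsilon)^{1-\gamma}$; since $\tilde v\ge 0$ on $B_\rr\supset\supp\eta$ and the map $t\mapsto(t+\epsilon)^{1-\gamma}$ is bounded by $\epsilon^{1-\gamma}$ on $[0,\infty)$ with Lipschitz behavior on any compact of $(0,\infty)$, one checks that $\varphi_\epsilon\ge 0$ and $\varphi_\epsilon\in W^{1,H(\cdot)}_0(\Omega)$. Plugging $\varphi_\epsilon$ into \eqref{sux} and using
\begin{flalign*}
D\varphi_\epsilon = q\eta^{q-1}(\tilde v+\epsilon)^{1-\gamma}D\eta+(1-\gamma)\eta^q(\tilde v+\epsilon)^{-\gamma}D\tilde v,
\end{flalign*}
the coefficient $1-\gamma<0$ in front of the interior term produces the correct sign, yielding
\begin{flalign*}
(\gamma-1)\int_{B_\rr}\eta^q(\tilde v+\epsilon)^{-\gamma}A(x,D\tilde v)\cdot D\tilde v\ \dx\le q\int_{B_\rr}\eta^{q-1}(\tilde v+\epsilon)^{1-\gamma}A(x,D\tilde v)\cdot D\eta\ \dx.
\end{flalign*}

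\emph{Step 2 (coercivity and growth).} Integrating $\eqref{A}_2$ along the ray $t\mapsto tz$ and using $A(x,0)=0$ (implicit in $\eqref{A}_1$), one obtains $A(x,z)\cdot z\ge c(\nu,p,q)H(x,z)$. Combined with $|A(x,z)|\le L(|z|^{p-1}+a(x)|z|^{q-1})$ from $\eqref{A}_1$, this gives
\begin{flalign*}
c(\gamma-1)\int_{B_\rr}\!\eta^q(\tilde v+\epsilon)^{-\gamma}H(x,D\tilde v)\ \dx\le c\!\int_{B_\rr}\!\eta^{q-1}|D\eta|(\tilde v+\epsilon)^{1-\gamma}\bigl(|D\tilde v|^{p-1}+a(x)|D\tilde v|^{q-1}\bigr)\ \dx.
\end{flalign*}

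\emph{Step 3 (Young and absorption).} Split each right-hand side factor as
\begin{flalign*}
|D\tilde v|^{p-1}\eta^{q-1}(\tilde v+\epsilon)^{-\gamma(p-1)/p}\cdot|D\eta|(\tilde v+\epsilon)^{1-\gamma/p},
\end{flalign*}
and analogously for the $q$-term with $p$ replaced by $q$ and the factor $a(x)$ distributed symmetrically. Apply Young's inequality with conjugate exponents $(p/(p-1),p)$ and $(q/(q-1),q)$ respectively. The powers of $\eta$ that arise on the absorbable side are $\eta^{p(q-1)/(p-1)}$ and $\eta^q$; the first is bounded above by $\eta^q$ because $p\le q$ implies $p(q-1)/(p-1)\ge q$ and $\eta\le 1$. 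Choosing the Young parameter small enough to absorb $c(\gamma-1)/2\cdot\int\eta^q(\tilde v+\epsilon)^{-\gamma}H(x,D\tilde v)\ \dx$ into the left-hand side yields
\begin{flalign*}
\int_{B_\rr}\eta^q(\tilde v+\epsilon)^{-\gamma}H(x,D\tilde v)\ \dx\le c\int_{B_\rr}(\tilde v+\epsilon)^{-\gamma}H\bigl(x,|D\eta|(\tilde v+\epsilon)\bigr)\ \dx
\end{flalign*}
with $c=c(\nu,L,p,q,\gamma)$ (the factor $(\gamma-1)^{-1}$ is hidden inside $c$).

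\emph{Step 4 (passing to the limit).} Let $\epsilon\downarrow 0$. The left-hand integrand increases monotonically to $\eta^q\tilde v^{-\gamma}H(x,D\tilde v)$, so monotone convergence applies. On the right-hand side, since $p-\gamma>0$ and $q-\gamma>0$ (here the hypothesis $\gamma<p$ is essential), the integrand $|D\eta|^p(\tilde v+\epsilon)^{p-\gamma}+a(x)|D\eta|^q(\tilde v+\epsilon)^{q-\gamma}$ converges to $\tilde v^{-\gamma}H(x,|D\eta|\tilde v)$ pointwise and is dominated by $|D\eta|^p(\tilde v+1)^{p-\gamma}+a(x)|D\eta|^q(\tilde v+1)^{q-\gamma}\in L^1(B_\rr)$ for $\epsilon\le 1$; dominated convergence concludes.

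\emph{Anticipated obstacle.} The delicate point is Step~3: in the double-phase framework the two growth regimes must be disentangled and the Young splitting must be calibrated so that (i) the $(\tilde v+\epsilon)$-exponents on the two sides reconstruct precisely $H(x,|D\eta|(\tilde v+\epsilon))$ on the right while leaving $(\tilde v+\epsilon)^{-\gamma}H(x,D\tilde v)$ on the left, and (ii) the $\eta$-powers cooperate rather than conflict---this is where the ordering $p\le q$ intervenes through $p(q-1)/(p-1)\ge q$. The condition $1<\gamma<p$ plays a double role: $\gamma>1$ gives the favorable sign in Step~1, while $\gamma<p$ keeps the exponents on the right-hand side positive so the passage $\epsilon\to 0$ is benign.
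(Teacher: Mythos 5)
Your proof is correct and arrives at the same inequality, but through a genuinely cleaner route at one point. The paper tests \eqref{sux} directly with $w=\eta^q\tilde v^{-\tilde\gamma}$ and, to make this admissible, first invokes the weak Harnack inequality \eqref{32} to dispose of the dichotomy ``$\tilde v\equiv0$ a.e. on $B_{2\rr}$ or $\tilde v$ is bounded away from zero on $\supp\eta$''---a step that tacitly requires non-negativity on a larger ball than the statement grants. You instead regularize by testing with $\eta^q(\tilde v+\epsilon)^{1-\gamma}$, which is bounded and admissible with no positivity discussion at all, and remove $\epsilon$ at the end by monotone convergence on the left and dominated convergence on the right (the domination using exactly that $0<p-\gamma<p$ and $0<q-\gamma<q$ give $\tilde v^{p-\gamma}, a(x)\tilde v^{q-\gamma}\in L^1$). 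This decouples Lemma~\ref{lem-cacc} from Proposition~\ref{p6}, which is a modest but real structural gain. The other difference is cosmetic: you split the $p$- and $q$-contributions and apply the scalar Young inequality to each, tracking the $\eta$-powers via $p(q-1)/(p-1)\ge q$ (using $p\le q$), whereas the paper packages the same absorption through the Orlicz--Young inequality for $H$ and \eqref{ex3}; the two computations are equivalent. Your appeal to $A(x,z)\cdot z\ge c(\nu,p,q)H(x,z)$, obtained by integrating $\eqref{A}_2$ along rays and using $A(x,0)=0$ (forced by $\eqref{A}_1$ since $p>1$), is the same coercivity the paper uses implicitly in passing from $A(x,D\tilde v)\cdot D\tilde v$ to $\nu H(x,D\tilde v)$.
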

\begin{proof}
Since $\tilde{v}$ is a non-negative supersolution to \eqref{A0}, inequality \eqref{32} applies, so, either $\tilde{v}\equiv0$ a.e. on $B_{2\rr}$, or we can assume that $\tilde{v}$ is strictly positive in $B_{\rr}$. In the first scenario there is nothing interesting to prove, so we can look at the second one. For $\eta$ as in the statement, and any $\tilde{\gamma}>0$, we test \eqref{sux} against $w:=\eta^{q}\tilde{v}^{-\tilde{\gamma}}$ to obtain, with the help of $\eqref{A}_{1,2}$ and Young's inequality,
\begin{flalign}\label{41}
\nu \tilde{\gamma}\int_{B_{\rr}}\tilde{v}^{-\tilde{\gamma}-1}\eta^{q}H(x,D\tilde{v}) \ \dx \le &Lq\int_{B_{\rr}}\left(\frac{H(x,D\tilde{v})}{\snr{D\tilde{v}}}\eta^{q-1}\snr{D\eta}\tilde{v}\right)\tilde{v}^{-\tilde{\gamma}-1} \ \dx\nonumber \\
\le &\frac{\nu \tilde{\gamma}}{2}\int_{B_{\rr}}\tilde{v}^{-\tilde{\gamma}-1}H(x,D\tilde{v})\eta^{q} \ \dx \nonumber \\
&+\left(\frac{c}{\nu\tilde{\gamma}}\right)^{q-1}\int_{B_{\rr}}H(x,\snr{D\eta}\tilde{v}) \tilde{v}^{-\tilde{\gamma}-1} \ \dx,
\end{flalign}
for $c=c(L,p,q)$. Absorbing terms in \eqref{40} and setting $\gamma:=\tilde{\gamma}+1$, we obtain the announced inequality.
\end{proof}
Now we show how to control the oscillation of a solution $v\in \mathcal{K}_{\psi}(\Omega)$ across the contact set via the oscillation of the obstacle $\psi$.
\begin{lemma}\label{osc}
Under assumptions \eqref{pq} and \eqref{A}, let $K\subset \Omega$ be a compact set and $v\in \mathcal{K}_{\psi}(\Omega)$ be a solution to problem \eqref{obs} with obstacle $\psi\in C(\Omega)$ and such that
\begin{flalign}\label{43}
\snr{\psi(x_{1})-\psi(x_{2})}\le C_\psi\snr{x_{1}-x_{2}}^{\beta_{0}} \ \ \mbox{for all} \ \ x_{1}\in K,\  x_{2}\in \Omega,
\end{flalign}
where $\beta_{0}\in (0,1]$ and $C_\psi$ is a positive, absolute constant. Let $\mu=-\diver A(x,Dv)$. Then, for any $\rr\in \left(0,\frac{1}{40}\min\left\{1,\dist\{K,\partial\Omega\}\right\}\right)$ and all $\bar{x}\in K$ it holds 
\begin{flalign*}
\mu(B_{\rr}(\bar{x}))\le c(\data_{\psi},C_\psi,\nr{a}_{L^\infty(\Omega)})\int_{B_{\rr}(\bar{x})}{H_{\sigma}(x,\rr^{-1})} \ \dx,
\end{flalign*}
where $\sigma:= 1-\tfrac{\beta_{0}}{q}(p-1)$.
\end{lemma}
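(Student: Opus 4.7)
The strategy is to test the nonnegative measure $\mu=-\diver A(x,Dv)\ge0$ against a cutoff and then bound the resulting integral through an obstacle--type Caccioppoli inequality whose ``bulk'' scales like $\rr^{\beta_0}$, dictated by the pointwise H\"older hypothesis on $\psi$ at $\bar x$. Fix $\eta\in C_c^\infty(B_{2\rr}(\bar x))$ with $\chi_{B_\rr(\bar x)}\le \eta\le 1$ and $\snr{D\eta}\le c/\rr$. Since $v$ is a supersolution to~\eqref{A0}, invoking $\eqref{A}_{1}$ gives
\[
\mu(B_\rr(\bar x))\le \int\eta\, d\mu=\int A(x,Dv)\cdot D\eta\, dx\le \frac{c}{\rr}\int_{B_{2\rr}(\bar x)}\bigl(\snr{Dv}^{p-1}+a(x)\snr{Dv}^{q-1}\bigr)\, dx.
\]

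The key intermediate fact is the local H\"older bound
\[
\osc_{B_{4\rr}(\bar x)} v\le c\,\rr^{\beta_0},\qquad c=c(\data_\psi,C_\psi).
\]
This is obtained by re-running the H\"older argument of Theorem~\ref{T4}: the oscillation reduction used there relies only on the weak Harnack inequality of Proposition~\ref{p6} and the oscillation of $\psi$ on balls centered at a contact point, and~\eqref{43} provides $\osc_{B_\sigma(\bar x)}\psi\le 2C_\psi \sigma^{\beta_0}$ for admissible $\sigma$, which suffices to run it at $\bar x$. Once this is in hand, set $M:=v(\bar x)+C_\psi(4\rr)^{\beta_0}$; from $v(\bar x)\ge \psi(\bar x)$ and~\eqref{43} one has $M\ge \sup_{B_{4\rr}(\bar x)}\psi$, so for $\hat\eta\in C_c^\infty(B_{4\rr}(\bar x))$ equal to $1$ on $B_{2\rr}(\bar x)$ with $\snr{D\hat\eta}\le c/\rr$, both $w_\pm:=v\mp \hat\eta^{q}(v-M)_\pm$ lie in $\mathcal{K}_{\psi}(\Omega)$. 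Inserting each into~\eqref{obs} and performing the usual $\eqref{A}_{1,2}$/Young's inequality absorption produces the obstacle Caccioppoli
\[
\int_{B_{2\rr}(\bar x)} H(x,Dv)\, dx\le c\int_{B_{4\rr}(\bar x)} H\!\left(x,\tfrac{\snr{v-M}}{\rr}\right) dx\le c\int_{B_{4\rr}(\bar x)} H(x,\rr^{\beta_0-1})\, dx,
\]
the last step by the oscillation estimate above.

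Feeding this Caccioppoli back into the first display via H\"older's inequality applied separately to $\snr{Dv}^{p-1}$ and $a(x)\snr{Dv}^{q-1}$ gives
\[
\mu(B_\rr(\bar x))\le c\,\rr^{n-p+\beta_0(p-1)}+c\,a_s(B_{4\rr}(\bar x))\,\rr^{n-q+\beta_0(q-1)}.
\]
With $\sigma=1-\beta_0(p-1)/q$ one computes $n-p\sigma=n-p+p\beta_0(p-1)/q$ and $n-q\sigma=n-q+\beta_0(p-1)$. Since $q\ge p$ and $\rr\le 1$, $\rr^{n-p+\beta_0(p-1)}\le \rr^{n-p\sigma}$, so the first term is dominated by $c\int_{B_\rr}\rr^{-p\sigma}\, dx$. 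For the second, the dichotomy~\eqref{adeg}--\eqref{andeg} applied on $B_{4\rr}(\bar x)$ together with the balance~\eqref{pq} (which yields $q-p\le \alpha$) gives, in the non-degenerate case, $a_s(B_{4\rr})\le c\,a_s(B_\rr)$ and $a_s(B_{4\rr})\rr^{\beta_0(q-p)}\le c(\nr{a}_{L^\infty(\Omega)})\,a_s(B_\rr)^\sigma$; in the degenerate case, $a_s(B_{4\rr})\le c\rr^\alpha$ and a direct exponent comparison shows the term is absorbed by $c\int_{B_\rr}\rr^{-p\sigma}\, dx$. Summing the two contributions yields
\[
\mu(B_\rr(\bar x))\le c\int_{B_\rr(\bar x)} H_\sigma(x,\rr^{-1})\, dx
\]
with $c=c(\data_\psi,C_\psi,\nr{a}_{L^\infty(\Omega)})$, as required.

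The hardest technical step is the local H\"older estimate on $v$ at $\bar x$: the H\"older statement in Theorem~\ref{T4} is phrased under the global assumption $\psi\in C^{0,\beta_0}(\Omega)$, whereas~\eqref{43} provides only pointwise control at $\bar x$. One must carefully inspect the proof to confirm that the oscillation argument can be localized to a single point, using only the oscillation of $\psi$ on balls centered at $\bar x$.
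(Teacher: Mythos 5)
The overall skeleton matches the paper's: Riesz measure, oscillation control of $v$ via the weak Harnack inequalities of Proposition~\ref{p6}, a Caccioppoli estimate, then a H\"older step to quantify $\mu(B_\rr)$, and finally the exponent comparison to pass to $H_\sigma(\cdot)$. Your oscillation bound (derived at a contact point, as the paper does at $x_0\in B_\rr(\bar x)\cap\Omega_c$; the case $B_\rr(\bar x)\cap\Omega_c=\emptyset$ being trivial) and the final absorption into $\int_{B_\rr}H_\sigma(x,\rr^{-1})\, dx$ are sound.

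The gap is in the middle step: \emph{``Hölder's inequality applied separately to $|Dv|^{p-1}$ and $a(x)|Dv|^{q-1}$ gives $\mu(B_\rr)\le c\rr^{n-p+\beta_0(p-1)}+c\,a_s\rr^{n-q+\beta_0(q-1)}$.''} The plain Caccioppoli you obtain, $\int_{B_{2\rr}}H(x,Dv)\le c\int_{B_{4\rr}}H(x,\rr^{\beta_0-1})$, bounds $\int|Dv|^p$ and $\int a|Dv|^q$ by the \emph{mixed} quantity $c\,\rr^n\bigl(\rr^{(\beta_0-1)p}+a_s\rr^{(\beta_0-1)q}\bigr)$. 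Running H\"older on, say, $\frac{1}{\rr}\int|Dv|^{p-1}\le\frac{c}{\rr}\rr^{n/p}\bigl(\int|Dv|^p\bigr)^{\frac{p-1}{p}}$ then produces, besides the pure term $c\,\rr^{n-p+\beta_0(p-1)}$, the \emph{cross term} $c\,\rr^{n-1}a_s^{\frac{p-1}{p}}\rr^{(\beta_0-1)q\frac{p-1}{p}}$, which is not of the claimed form and is not automatically absorbed: comparing its $\rr$-exponent with $n-p\sigma$ one finds the sign of the difference is governed by $q-\beta_0(p+q)$, which is positive for $\beta_0<q/(p+q)$, so for small $\beta_0$ the cross term dominates. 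The same defect appears in the $q$-term. To make it close, one must invoke the degenerate/non-degenerate dichotomy \emph{already at this stage} (using $q-p<\alpha$ from \eqref{pq} to absorb the $a_s$ factor), which you only apply later and only in the final absorption; as written the intermediate estimate is not established.

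This is precisely what the paper's Lemma~\ref{lem-cacc} (Caccioppoli with a weight $\tilde v^{-\gamma}$, $\gamma\in(1,p)$) is engineered to avoid. Inserting the artificial factor $\tilde v^{(-1+1)\gamma\frac{t-1}{t}}$ in \eqref{46} and applying the \emph{weighted} H\"older inequality makes both factors in the split appear as powers of the full quantity $\int_{B_{4\rr}}H(x,\rr^{-1})\, dx$, with the oscillation extracted cleanly: for $t\in\{p,q\}$ the two $\tilde v$-exponents recombine to $(\osc v)^{p(t-1)/t}\ge(\osc v)^{p-1}$, so \eqref{47}--\eqref{48} both yield $c\,\rr^{\beta_0(p-1)}\int_{B_\rr}H(x,\rr^{-1})\, dx$ without any mixed term to chase. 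If you want to stay with the plain Caccioppoli, you either need a weighted Young inequality with a carefully chosen $\rr$-dependent weight, or the degenerate/non-degenerate case split performed inside the H\"older step; both require verification that you have not carried out. As it stands, the step from the Caccioppoli to the intermediate bound on $\mu$ is the missing idea.
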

\begin{proof}
Since $v\in \mathcal{K}_{\psi}(\Omega)$ is a solution to problem \eqref{obs}, it is a supersolution to \eqref{A0} and, given that $\psi \in C(\Omega)$, by Theorem \ref{T4}, third part, $v$ is continuous. Since $v$ realizes \eqref{sux}, then Riesz's representation theorem renders the existence of a unique, non-negative Radon measure $\mu$ such that for all $\eta \in C^{\infty}_{c}(\Omega)$ there holds
\begin{flalign}\label{meas}
\int_{\Omega}A(x,Dv)\cdot D\eta \ \dx -\int_{\Omega}\eta \ \d\mu=0 \ \ \mbox{in} \ \ \Omega.
\end{flalign}
Let $\bar{x}\in K$ and $B_{\rr}(\bar{x})\subset \Omega$ such that $B_{16\rr}(\bar{x})\Subset \Omega$. Keeping in mind the terminology introduced in \eqref{om}, if {$B_{\rr}(\bar{x})$ does not touch the contact set,} i.e. $B_{\rr}(\bar{x})\cap\Omega_{c}=\emptyset$, then, by Theorem \ref{T4}, the third part, the function $v$ is $\mathcal{A}_{H(\cdot)}$-harmonic in $B_{\rr}(\bar{x})$ and $\mu(B_{\rr}(\bar{x}))\equiv 0$. Hence, it suffices to consider only the case when $B_{\rr}\cap \Omega_{c}\not = \emptyset$. Let $x_{0}\in B_{\rr}(\bar{x})\cap\Omega_{c}$ and notice that, by monotonicity, $\mu(B_{\rr}(\bar{x}))\le \mu(B_{2\rr}(x_{0}))$. Let $x_{+}, x_{-}\in \bar{B}_{16\rr}(x_{0})$ be such that $\psi(x_{+})=\sup_{x\in B_{16\rr}(x_{0})}\psi(x)$ and $\psi(x_{-})=\inf_{x\in B_{16\rr}(x_{0})}\psi(x)$. Then, by \eqref{43},
\begin{flalign}\label{42}
\osc_{x\in B_{16\rr}(x_{0})}\psi(x)=&\psi(x_{+})-\psi(x_{-})\pm \psi(\bar{x})\nonumber\\
\le &C_\psi\left(\snr{x_{+}-\bar{x}}^{\beta_{0}}+\snr{\bar{x}-x_{-}}^{\beta_{0}}\right)\nonumber \\
\le &C_\psi\left(\snr{x_{+}-x_{0}}^{\beta_{0}}+2\snr{\bar{x}-x_{0}}^{\beta_{0}}+\snr{x_{0}-x_{-}}^{\beta_{0}}\right)\le c(\beta_{0},C_\psi)\rr^{\beta_{0}}.
\end{flalign}
Now set
\begin{flalign*}
\vartheta_+:=\osc_{x\in B_{8\rr}(x_{0})}\psi(x)+\inf_{x\in B_{8\rr}(x_{0})}v(x)\quad \mbox{and}\quad \vartheta_{-}:=\osc_{x\in B_{8\rr}(x_{0})}\psi(x)-\inf_{x\in B_{8\rr}(x_{0})}v(x),
\end{flalign*}
and observe that   for all $x\in B_{8\rr}(x_{0})$,
\begin{flalign}\label{44}
\begin{cases}
\ v(x)-\vartheta_{+}\le v(x)+
\osc_{x\in B_{8\rr}(x_{0})}\psi(x)-\inf_{x\in B_{8\rr}(x_{0})}v(x)=v(x)+\vartheta_{-},\\
\ v(x)+\vartheta_{-}\ge 0.
\end{cases}
\end{flalign}
Moreover, since 
\begin{flalign*}
\inf_{x\in B_{8\rr}(x_{0})}\psi(x)\le \inf_{x\in B_{8\rr}(x_{0})}v(x)\le v(x_{0})=\psi(x_{0})\le \sup_{x\in B_{8\rr}(x_{0})}\psi(x),
\end{flalign*}
we infer that $ \nr{\psi}_{L^{\infty}(B_{8\rr}(x_{0}))}\le \vartheta_+$. Therefore, by \eqref{30} with $M=\vartheta_{+}$ we have
\begin{flalign*}
\sup_{x\in B_{4\rr}(x_{0})}(v(x)-\vartheta_{+})_{+}\le c\left(\mint_{B_{8\rr}}(v(x)-\vartheta_{+})_{+}^{h} \ \dx\right)^{\frac{1}{h}},
\end{flalign*}
for {\it $c=c(\data_{\psi},h)$}. Combining the content of the previous display with~\eqref{44} we get
\begin{flalign*}
\sup_{x\in B_{4\rr}(x_{0})}(v(x)-\vartheta_{+})_{+}\le c\left(\mint_{B_{8\rr}}(v(x)+\vartheta_{-})^{h-} \ \dx\right)^{\frac{1}{h-}},
\end{flalign*}
where {\it $c=c(\data_{\psi})$} and $h_{-}$ is the exponent appearing in \eqref{32}. From $\eqref{44}_{2}$, we see that $v+\vartheta_{-}$ is a non-negative supersolution to \eqref{A0} in $B_{8\rr}(x_{0})$, thus, using the definition of $\vartheta_{+}$ we have
\begin{flalign*}
\osc_{x\in B_{4\rr}(x_{0})}v(x)&-\osc_{x\in B_{8\rr}(x_{0})}\psi(x)\le\sup_{x\in B_{4\rr}(x_{0})}(v(x)-\vartheta_{+})_{+}\le c\inf_{x\in B_{4\rr}(x_{0})}(v(x)+\vartheta_{-})\nonumber \\
\le &c\left(v(x_{0})+\osc_{x\in B_{8\rr}(x_{0})}\psi(x)-\inf_{x\in B_{8\rr}(x_{0})}\psi(x)\right)
\le c\osc_{x\in B_{8\rr}(x_{0})}\psi(x),
\end{flalign*}
which due to \eqref{42}  implies that
\begin{flalign}\label{45}
\osc_{x\in B_{4\rr}(x_{0})}v(x)\le c\osc_{x\in B_{8\rr}(x_{0})}\psi(x)\le c\rr^{\beta_{0}}.
\end{flalign}
 Here {\it $c=c(\data_{\psi},C_\psi)$}. Now, set 
 \begin{equation}
     \label{v-tilde}\tilde{v}:=v-\inf_{x\in B_{4\rr}(x_{0})}v(x),
 \end{equation} notice that $D\tilde{v}=Dv$ and pick $\eta\in C^{1}_{c}(B_{4\rr}(x_{0}))$ such that $\chi_{B_{2\rr}(x_{0})}\le \eta\le \chi_{B_{4\rr}(x_{0})}$ and $\snr{D\eta}\le \rr^{-1}$. Clearly, recalling also Theorem \ref{T4}, $\tilde{v}$ is a bounded supersolution to \eqref{A0} which is non-negative in $B_{4\rr}(x_{0})$, thus, by \eqref{meas}, the weighted H\"older's inequality and the fact that $(q-1)p/(p-1)\ge q$, we obtain
\begin{flalign}\label{46}
\mu(B_{2\rr}(x_{0}))\le& \int_{B_{4\rr}(x_{0})}\eta^{q} \ \d\mu=q\int_{B_{4\rr}(x_{0})}\eta^{q-1}A(x,Dv)\cdot D\eta \ \dx\nonumber\\ 
\le&qL\int_{B_{4\rr}(x_{0})}\eta^{q-1}\left(\tilde{v}^{(-1+1)\gamma\frac{p-1}{p}}\snr{Dv}^{p-1}+\tilde{v}^{(-1+1)\gamma\frac{q-1}{q}}a(x)\snr{Dv}^{q-1}\right)\snr{D\eta} \ \dx\nonumber \\
\le &qL\left(\int_{B_{4\rr}(x_{0})}\eta^{q}\tilde{v}^{-\gamma}\snr{Dv}^{p} \ \dx\right)^{\frac{p-1}{p}}\left(\int_{B_{4\rr}(x_{0})}\tilde{v}^{\gamma(p-1)}\snr{D\eta}^{p} \ \dx \right)^{\frac{1}{p}}\nonumber \\
&+qL\left(\int_{B_{4\rr}(x_{0})}\eta^{q}\tilde{v}^{-\gamma}a(x)\snr{Dv}^{q} \ \dx\right)^{\frac{q-1}{q}}\left(\int_{B_{4\rr}(x_{0})}\tilde{v}^{\gamma(q-1)}a(x)\snr{D\eta}^{q} \ \dx\right)^{\frac{1}{q}},
\end{flalign}
where $\gamma\in (1,p)$. The reason behind this choice will be clear in a few lines. Let us estimate the last two terms in \eqref{46}. 
First, notice that
since $B_{\rr}(\bar{x})\subset B_{2\rr}(x_{0})$, then $a_{i}(B_{\rr}(\bar{x}))\ge a_{i}(B_{4\rr}(x_{0}))$. Invoking also \eqref{pq}, we have
\begin{flalign}\label{49}
\int_{B_{4\rr}(x_{0})}H(x,\rr^{-1}) \ \dx =&\int_{B_{4\rr}(x_{0})}H(x,\rr^{-1})\pm a_{i}(B_{4\rr}(x_{0})) \ic{\rr^{-q}}\ \dx\nonumber \\
\le &c\rr^{n}H^{-}_{B_{4\rr}(x_{0})}(\rr^{-1})\le c\rr^{n}H^{-}_{B_{\rr}(\bar{x})}(\rr^{-1})\nonumber \\
\le &c\int_{B_{\rr}(\bar{x})}H(x,\rr^{-1}) \ \dx,
\end{flalign}
where $c=c(n,[a]_{0,\alpha})$. We continue estimating \eqref{46} by means of Lemma \ref{lem-cacc}, \eqref{v-tilde}, \eqref{45} and \eqref{49} getting
\begin{flalign}\label{47}
&\left(\int_{B_{4\rr}(x_{0})}\eta^{q}\tilde{v}^{-\gamma}\snr{Dv}^{p} \ \dx\right)^{\frac{p-1}{p}}\left(\int_{B_{4\rr}(x_{0})}\tilde{v}^{\gamma(p-1)}\snr{D\eta}^{p} \ \dx \right)^{\frac{1}{p}}\nonumber \\
&\le c\left(\int_{B_{4\rr}(x_{0})}\tilde{v}^{-\gamma}H(x,\snr{D\eta}\tilde{v}) \ \dx\right)^{\frac{p-1}{p}}\left(\int_{B_{4\rr}(x_{0})}\tilde{v}^{\gamma(p-1)}\snr{D\eta}^{p} \ \dx \right)^{\frac{1}{p}}\nonumber \\
&\le c\left(\int_{B_{4\rr}(x_{0})}\tilde{v}^{p-\gamma}H(x,\rr^{-1}) \ \dx\right)^{\frac{p-1}{p}}\left(\int_{B_{4\rr}(x_{0})}\tilde{v}^{\gamma(p-1)}\snr{D\eta}^{p} \ \dx \right)^{\frac{1}{p}}\nonumber \\
&\le c\left(\osc_{x\in B_{4\rr}(x_{0})}v(x)\right)^{p-1}\int_{B_{4\rr}(x_{0})}H(x,\rr^{-1}) \ \dx\nonumber \\
&\le c\left(\osc_{x\in B_{8\rr}(x_{0})}\psi(x)\right)^{p-1}\int_{B_{\rr}(\bar{x})}H(x,\rr^{-1}) \ \dx\le c\rr^{\beta_{0}(p-1)}\int_{B_{\rr}(\bar{x})}H(x,\rr^{-1}) \ \dx
\end{flalign}
and similarly
\begin{flalign}\label{48}
&\left(\int_{B_{4\rr}(x_{0})}\eta^{q}\tilde{v}^{-\gamma}a(x)\snr{Dv}^{q} \ \dx\right)^{\frac{q-1}{q}}\left(\int_{B_{4\rr}(x_{0})}\tilde{v}^{\gamma(q-1)}a(x)\snr{D\eta}^{q} \ \dx\right)^{\frac{1}{q}}\nonumber \\
&\le c\left(\int_{B_{4\rr}(x_{0})}\tilde{v}^{-\gamma}H(x,\snr{D\eta}\tilde{v}) \ \dx\right)^{\frac{q-1}{q}}\left(\int_{B_{4\rr}(x_{0})}\tilde{v}^{\gamma(q-1)}a(x)\snr{D\eta}^{q}\ \dx\right)^{\frac{1}{q}}\nonumber \\
&\le c\left(\osc_{x\in B_{4\rr}(x_{0})}v(x)\right)^{\frac{p(q-1)}{q}}\int_{B_{\rr}(\bar{x})}H(x,\rr^{-1})\ \dx\nonumber \\
&\le c\left(\osc_{x\in B_{8\rr}(x_{0})}\psi(x)\right)^{\frac{p(q-1)}{q}}\int_{B_{\rr}(\bar{x})}H(x,\rr^{-1})\ \dx\nonumber \\
&\le c\rr^{\beta_{0}\frac{p(q-1)}{q}}\int_{B_{\rr}(\bar{x})}H(x,\rr^{-1}) \ \dx \le c\rr^{\beta_{0}(p-1)}\int_{B_{\rr}(\bar{x})}H(x,\rr^{-1}) \ \dx, \end{flalign}
with {\it $c=c(\data_{\psi},C_{\psi})$}. Here, we also used{ that  $\beta_{0}\frac{p(q-1)}{q}>\beta_{0}(p-1)$.  In fact notice that, since $q> p$, we have also
\begin{flalign}\label{c0}
\begin{cases}
\ 1>1-\frac{\beta_{0}}{q}(p-1)> 1-\frac{\beta_{0}}{p}(p-1),\\
\ p\left(1-\frac{\beta_{0}}{q}(p-1)\right)>1.
\end{cases}
\end{flalign}} Merging the content of displays \eqref{46}, \eqref{47} and \eqref{48}, we obtain
\begin{flalign}\label{65}
\mu(B_{\rr}(\bar{x}))\le&\mu(B_{2\rr}(x_{0}))\le c\rr^{\beta_{0}(p-1)}\int_{B_{\rr}(\bar{x})}H(x,\rr^{-1})\ \dx,
\end{flalign}
for {\it $c=c(\data_{\psi},C_{\psi})$}. We stress that when the modulating coefficient $a$ degenerates in the sense of \eqref{adeg}, we find again the result in \cite[Lemma 2.1]{kizo}. Now we need to relate the quantity in \eqref{65} with the intrinsic Hausdorff measures discussed in section \ref{haus}. {Having~\eqref{c0} and} recalling that $\rr\le 1$, we conclude that
\begin{flalign*}
\rr^{\beta_{0}(p-1)}\int_{B_{\rr}(\bar{x})}H(x,\rr^{-1}) \ \dx \le c\int_{B_{\rr}(\bar{x})}\rr^{-p\left(1-\frac{\beta_{0}}{q}(p-1)\right)}+a(x)^{1-\frac{\beta_{0}}{q}(p-1)}\rr^{-q\left(1-\frac{\beta_{0}}{q}(p-1)\right)} \ \dx,
\end{flalign*}
with {\it $c=c(p,q,\nr{a}_{L^\infty(\Omega)})$}.
We set $\sigma:=1-\frac{\beta_{0}}{q}(p-1)$ and $H_{\sigma}(x,t)=t^{p\sigma}+a(x)^{\sigma}t^{q\sigma}$, {see~\eqref{Hgamma}}. Keeping in mind $\eqref{c0}_{2}$, we see that the Hausdorff-type measures defined by means of $H_{\sigma}(\cdot)$ enters among those discussed in Section \ref{haus}, therefore, from \eqref{65} we obtain
\begin{flalign*}
\mu(B_{\rr}(\bar{x}))\le c\int_{B_{\rr}(\bar{x})}H_{\sigma}(x,\rr^{-1}) \ \dx,
\end{flalign*}
for {\it $c=c(\data_{\psi},C_\psi,\nr{a}_{L^\infty(\Omega)})$.} Notice that the dependency of the constants from $\data_{\psi}$, is justified by Remark \ref{r2}.
\end{proof}

\subsection{Proof of Theorem \ref{T6}}
Now we are in position for proving our main result on the removability of singularities for solutions to \eqref{A0}. Fix an open set $U\Subset \Omega$ with $U\cap E \not =\emptyset$ (otherwise the proof is trivial), and let $v\in \mathcal{K}_{u}(U)$ be the unique solution to problem \eqref{obs} in $U$. Notice that, in the light of the discussion at the beginning of Section \ref{o}, $\mu=-\diver A(x,Dv)$ is a non-negative Radon measure by Riesz's representation theorem. Let $K\Subset E\cap U$ be a compact set. Lemma \ref{osc} then assures that for any $x_{0}\in K$ and all $\rr\in \left(0,\frac{1}{40}\min\left\{1,\dist\{K,\partial (E\cap U)\}\right\}\right)$, the following decay holds 
\begin{flalign}\label{51}
\mu(B_{\rr}(x_{0}))\le c\int_{B_{\rr}(x_{0})}H_{\sigma}\left(x,{\rr}^{-1}\right) \ \dx,
\end{flalign}
with $c=c(\texttt{data}_{u},C_{u},\nr{a}_{L^{\infty}(\Omega)})$. By assumption, $\mathcal{H}_{H_{\sigma}(\cdot)}(E)=0$, {consequently also}  $\mathcal{H}_{H_{\sigma}(\cdot)}(K)=0$. Therefore, for any $\varepsilon>0$ we can cover $K$ with balls $B_{\rr_{j}}(x_{j})$ with radii $\rr_{j}$ less than $\frac{1}{80}\min\left\{1,\dist\{K,\partial (E\cap U)\}\right\}$ and $(x_{j})_{j \in \mathbb{N}}\subset K$, so that
\begin{flalign*}
\mu(K)\le \sum_{j=1}^{\infty}\mu(B_{\rr_{j}}(x_{j}))\stackrel{\eqref{51}}{\le} c\sum_{j=1}^{\infty}\int_{B_{\rr_{j}}(x_{j})}H_\sigma\left(x,{\rr}^{-1}\right) \ \dx\le \varepsilon.
\end{flalign*}
Sending $\varepsilon$ to zero in the previous display, we can conclude that $\mu(K)=0$. Moreover, since $\mu$ is a  Radon measure, $K$ is an arbitrary compact subset of $E\cap U$, and $E\cap U$ is $\mu$-measurable, we get that $\mu(E\cap U)=0$. Now we turn our attention to the set $U\setminus E$ and show that $\mu(U\setminus E)=0$. Let $\eta\in W^{1,H(\cdot)}_{0}(U\setminus E)$, $\eta(x)\ge 0$ for a.e. $x\in U\setminus E$ and, for $s>0$, set $\eta_{s}:=\min\left\{s\eta,v-u\right\}$. Notice that, $\eta_{s}\in W^{1,H(\cdot)}_{0}((U\setminus E)\cap \{x\colon v(x)>u(x)\})$, therefore, by Theorem \ref{T4}, third part,
\begin{flalign}\label{x1} 
\int_{U\setminus E}A(x,Dv)\cdot D\eta_{s} \ \dx=\int_{(U\setminus E)\cap \{v>u\}}A(x,Dv)\cdot D\eta_{s} \ \dx=0.
\end{flalign}
On the other hand, since $u$ solves \eqref{A0} in $U\setminus E$, then
\begin{flalign}\label{x2}
\int_{U\setminus E}A(x,Du)\cdot D\eta_{s} \ \dx =0.
\end{flalign}
Subtracting \eqref{x2} from \eqref{x1} we end up with
\begin{flalign}\label{x3}
\int_{U\setminus E}\left(A(x,Dv)-A(x,Du)\right)\cdot D\eta_{s} \ \dx=0.
\end{flalign}
Then, recalling also \eqref{A-monotone}, from \eqref{x1}-\eqref{x3} we obtain
\begin{flalign*}
\int_{(U\setminus E)\cap\{s\eta\le v-u\}}&\left(A(x,Dv)-A(x,Du)\right)\cdot D\eta \ \dx\nonumber \\
&\le -\frac{1}{s}\int_{(U\setminus E)\cap\{ s\eta>v-u\}}\left(A(x,Dv)-A(x,Du)\right)\cdot (Dv-Du) \ \dx\le 0.
\end{flalign*}
The content of the previous display and the dominated convergence theorem allow concluding that 
\begin{flalign*}
\int_{U\setminus E}\left(A(x,Dv)-A(x,Du)\right)\cdot D\eta \ \dx\le 0
\end{flalign*}
and, since $u$ solves \eqref{A0}, we obtain
\begin{flalign*}
\int_{U\setminus E}A(x,Dv)\cdot D\eta \ \dx \le 0 
\end{flalign*}
for all non-negative $\eta\in W^{1,H(\cdot)}_{0}(U\setminus E)$, thus, recalling \eqref{meas}, $\mu(U\setminus E)=0$. Hence, $\mu(U)=0$, which means
\begin{flalign}\label{60}
\int_{U}A(x,Dv)\cdot Dw \ \dx =0 \ \ \mbox{for all} \ \ w\in W^{1,H(\cdot)}_{0}(U).
\end{flalign}
Now, set $\hat{A}(x,z):=-A(x,-z)$ and notice that, by very definition, the tensor $\hat{A}$ satisfies \eqref{A}. Consider $\hat{v}\in W^{1,H(\cdot)}(U)$ being a solution to the obstacle problem
\begin{flalign*}
\int_{\Omega}\hat{A}(x,D\hat{v})\cdot (Dw-D\hat{v}) \ \dx\ge 0 \quad\text{ for all }\ w\in \mathcal{K}_{-u}(U).
\end{flalign*}
 Proceeding as  before, we  conclude that $\hat{v}$ satisfies
\begin{flalign}\label{61}
\int_{U}-A(x,-D\hat{v})\cdot Dw \ \dx=\int_{U}\hat{A}(x,D\hat{v})\cdot Dw \ \dx=0,
\end{flalign}
for all $w\in W^{1,H(\cdot)}_{0}(U)$. Therefore, it only remains to show that $v=-\hat{v}=u$. If we define $\bar{w}:=v+\hat{v} \in W^{1,H(\cdot)}(U)$, therefore it is an admissible competitor in both \eqref{60} and \eqref{61}. Thus, testing them against $\bar{w}$, adding \eqref{60} and \eqref{61}  we obtain
\begin{flalign*}
0\le \nu\int_{U}\mathcal{V}(Dv,D\hat{v}) \ \dx \le\,{c} \int_{U}\left(A(x,Dv)-A(x,-D\hat{v})\right)\cdot(Dv+D\hat{v}) \ \dx =0,
\end{flalign*}
{cf.~\eqref{A-monotone}}. Therefore, $D(v+\hat{v})=0$ a.e. in $U$ and since $v+\hat{v}\in W^{1,H(\cdot)}_{0}(U)$, we have $v=-\hat{v}$ a.e. in $U$. Recalling that by {Theorem~\ref{T4}}, $-\hat{v}\le u\le v$ a.e. in $U$, from \eqref{60} and \eqref{61} we obtain that $u$ solves \eqref{A0} in $U$. Since $U$ is arbitrary, we can conclude that $u$ solves \eqref{A0} in $\Omega$, thus $E$ is removable.\qed

\end{document}